\newcolumntype{C}[1]{>{\centering\let\newline\\\arraybackslash\hspace{0pt}}m{#1}}
\newtheorem{theorem}{Theorem}
\newtheorem{lemma}[theorem]{Lemma}
\newtheorem{corollary}[theorem]{Corollary}
\newtheorem{definition}{Definition}
\newtheorem{remark}{Remark}
\newtheorem{conjecture}[theorem]{Conjecture}
\newtheorem{result}[theorem]{Lemma}
\newcommand{\tref}[1]{Theorem \ref{theorem:#1}}
\newcommand{\lref}[1]{Lemma \ref{lemma:#1}}
\newcommand{\fref}[1]{Figure \ref{fig:#1}}
\newcommand{\taref}[1]{Table \ref{table:#1}}
\newcommand{\cref}[1]{Conjecture \ref{conjecture:#1}}
\def\addlegendimage{\csname pgfplots@addlegendimage\endcsname}
\pgfplotsset{
	every legend to name picture/.style={west}
}
\newcommand{\PFmnu}[4]{
	\coordinate (axis) at (#1);
	\coordinate (axis2) at (0,0);
	\draw[help lines] (#1) grid +(#2,#3);
	\draw[dashed] (#1) -- +(#2,#3);
	\coordinate (prev) at (#1);
	
	\foreach \x/\y in {#4}{	
		\ifnum\y=0
		\draw[line width=2pt,blue] (axis)+(axis2) -- +(#2,0);
		\else
		\draw[line width=2pt,blue] (axis)+(\x,0) -- +(\x,1);
		\draw[line width=2pt,blue] (axis)+(axis2) -- +(\x,0);
		\path (axis)+(\x+0.5,0.5) node {$\y$};		
		\fi
		
		\path (axis) -- +(0,1) coordinate (axis);
		
		\path (axis2) -- (\x,0) coordinate (axis2);
	}
}
\newcommand{\PFmnum}[4]{
	\coordinate (axis) at (#1);
	\coordinate (axis2) at (0,0);
	\draw[help lines] (#1) grid +(#2,#3);
	\draw[dashed] (#1) -- +(#2,#3);
	\coordinate (prev) at (#1);
	\foreach \x/\y in {#4}{	
		\ifnum\y=0
		\draw[line width=1pt,blue] (axis)+(axis2) -- +(#2,0);
		\else
		\draw[line width=1pt,blue] (axis)+(\x,0) -- +(\x,1);
		\draw[line width=1pt,blue] (axis)+(axis2) -- +(\x,0);
		\path (axis)+(\x+0.5,0.5) node {\tiny$\y$};		
		\fi
		\path (axis) -- +(0,1) coordinate (axis);
		\path (axis2) -- (\x,0) coordinate (axis2);
	}
}
\newcommand{\Dpath}[4]{
	\coordinate (axis) at (#1);
	\coordinate (axis2) at (#1);
	\draw[help lines] (#1) grid +(#2,#3);
	\draw[dashed] (#1) -- +(#2,#3);
	\foreach \x in {#4}{	
		\ifnum\x=-1
		\draw[line width=2pt,blue] (axis)+(axis2) -- +(#2,0);
		\else
		\draw[line width=2pt,blue] (axis)+(\x,0) -- +(\x,1);
		\draw[line width=2pt,blue] (axis)+(axis2) -- +(\x,0);
		\path (axis) -- +(0,1) coordinate (axis);
		\path (axis2) -- (\x,0) coordinate (axis2);	
		\fi
	}
}
\newcommand{\fillshade}[1]{\foreach \x/\y in {#1}{\path[fill,blue!20!white] (\x-1,\y-1) rectangle (\x,\y);}}
\newcommand{\fillshadesmall}[1]{\foreach \x/\y in {#1}{\path[fill,blue!20!white] (\x-.9,\y-.9) rectangle (\x-.1,\y-.1);}}
\newcommand{\fillshadea}[1]{\foreach \x/\y in {#1}{\path[fill,orange!50!yellow!70!white] (\x-1,\y-1) rectangle (\x,\y);}}
\newcommand{\PFtext}[2]{
	\coordinate (axis) at (#1);
	\coordinate (axis2) at (#1);
	\foreach \x/\y in {#2}{	
		\path (axis)+(\x+0.5,0.5) node {$\y$};		
		\path (axis) -- +(0,1) coordinate (axis);
		\path (axis2) -- (\x,0) coordinate (axis2);
}}
\newcommand{\PFtextr}[4]{
	\coordinate (axis) at (0,0);
	\coordinate (axis2) at (0,0);
	\draw[help lines] (#1) grid +(#2,#3);
	\draw[dashed] (#1) -- +(#2,#3);
	\coordinate (prev) at (#1);
	
	\foreach \x/\y in {#4}{	
		\path (axis)+(\x+0.25,0.25) node {\tiny$\y$};		
		\path (axis) -- +(0,1) coordinate (axis);
		\path (axis2) -- (\x,0) coordinate (axis2);
}}
\newcommand{\fillls}[3]{\node at (#1,#2) {\footnotesize$#3$};}
\newcommand{\fille}[3]{\node[anchor=east] at (#1,#2) {\footnotesize$#3$};}
\newcommand{\fillw}[3]{\node[anchor=west] at (#1,#2) {\footnotesize$#3$};}
\newcommand{\fillll}[3]{\node at (#1-.5,#2-.5) {\footnotesize$#3$};}
\newcommand{\fillsome}[1]{\foreach \x/\y/\z in {#1}{\node at (\x-.5,\y-.5) {\footnotesize$\z$};}}
\newcommand\fillcir[1]{\foreach \x/\y in {#1}{\node at (\x-.5,\y-.5) {$\bigcirc$};}}
\newcommand\fillcro[1]{\foreach \x/\y in {#1}{\node at (\x-.5,\y-.5) {$\times$};}}
\newcommand\filltri[1]{\foreach \x/\y in {#1}{\node at (\x-.5,\y-.5) {$\bigtriangleup$};}}
\newcommand\fillpoint[1]{\foreach \x/\y in {#1}{\draw[red, thick,fill] (\x,\y) circle (2mm);}}
\newcommand{\Dyck}{\mathcal{D}}
\newcommand{\PF}{\pi}
\newcommand{\PFc}{\mathcal{P}}
\newcommand{\dinv}{\mathrm{dinv}}
\newcommand{\tdinv}{\mathrm{tdinv}}
\newcommand{\maxdinv}{\mathrm{maxdinv}}
\newcommand{\pdinv}{\mathrm{pdinv}}
\newcommand{\dinvcorr}{\mathrm{dinvcorr}}
\newcommand{\area}{\mathrm{area}}
\newcommand{\rank}{\mathrm{rank}}
\newcommand{\ides}{\mathrm{ides}}
\newcommand{\pides}{\mathrm{pides}}
\newcommand{\word}{\mathrm{word}}
\newcommand{\arm}{\mathrm{arm}}
\newcommand{\leg}{\mathrm{leg}}
\newcommand{\Hikita}{\mathrm{H}}
\newcommand{\Qmn}[1]{\mathrm{Q}_{#1}}
\newcommand{\qtn}[1]{[#1]_{q,t}}
\newcommand{\qtnn}[1]{[#1]_{q^2,t^2}}
\newcommand{\sg}{\sigma}
\newcommand{\Sn}[1]{\mathcal{S}_{#1}}
\newcommand{\Des}[1]{\mathrm{Des}}
\newcommand{\swi}{\mathbb{S}}
\newcommand{\partitiontwo}[6]{
	\draw (#1,#2) rectangle +(#3,-1);
	\node at (#1+#3/2,#2-.5) {\footnotesize
		$#4$};
	\draw (#1,#2)+(0,-1) rectangle +(#5,-2);
	\node at (#1+#5/2,#2-1.5) {\footnotesize
		$#6$};
}
\newcommand{\arrowx}[2]{
	\draw[-latex,very thick] (#1)--(#2);
}
\newcommand{\dotdot}[3]{
	\draw[fill] (#1) circle [radius=0.05];
	\draw[fill] (#2) circle [radius=0.05];
	\draw[fill] (#3) circle [radius=0.05];
}
\newcommand{\scoeff}[1]{[s_{#1}]}
\newcommand{\sqt}[1]{s_{#1}(q,t)}
\newcommand{\ret}{\mathrm{ret}}
\newcommand{\al}{\alpha}
\newcommand{\PFaa}{\mathcal{P}_{3,n | \lambda 13 \mu}}
\newcommand{\PFbb}{\mathcal{P}_{3,n | \lambda 22 \mu}}
\newcommand{\la}{\lambda}
\newcommand{\hstr}{\mathit{hstr}}
\newcommand{\vstr}{\mathit{vstr}}
\title{Schur Function Expansions and the Rational Shuffle Theorem}
\author{
	Dun Qiu\\[-0.8ex]
	\small Department of Mathematics\\[-0.8ex]
	\small Beijing Jiaotong University\\[-0.8ex]
	\small Beijing, P. R. China\\[-0.8ex]
	\small \texttt{qiudun123@163.com}
	\and 
	Jeffrey Remmel \\
	\small Department of Mathematics\\[-0.8ex]
	\small University of California, San Diego\\[-0.8ex]
	\small La Jolla, CA, USA\\[-0.8ex]
}
\date{}
\begin{document}

\maketitle

\begin{abstract}
	\noindent Gorsky and Negut introduced operators $\mathrm{Q}_{m,n}$ on symmetric functions and conjectured that, in the case where $m$ and $n$ are relatively prime, the expression $\mathrm{Q}_{m,n}(1)$ is given by the Hikita polynomial $\mathrm{H}_{m,n}[X;q,t]$. 
	Later, Bergeron-Garsia-Leven-Xin
	extended and refined the conjectures of  $\mathrm{Q}_{m,n}(1)$ for arbitrary $m$ and $n$ which 
	we call the Extended Rational Shuffle Conjecture. 
	In the special case $\mathrm{Q}_{n+1,n}(1)$, the Rational Shuffle Conjecture becomes the Shuffle Conjecture of Haglund-Haiman-Loehr-Remmel-Ulyanov, which was proved in 2015 by Carlsson and Mellit as the Shuffle Theorem. The Extended Rational Shuffle Conjecture was later proved by Mellit as the Extended Rational Shuffle Theorem.  The main goal of this paper is 
	to study 
	the combinatorics of the coefficients that arise in the Schur function expansion 
	of $\mathrm{Q}_{m,n}(1)$ in certain special cases. 
	Leven gave a combinatorial proof of the Schur function expansion of 
	$\mathrm{Q}_{2,2n+1}(1)$ and $\mathrm{Q}_{2n+1,2}(1)$. 
	In this paper, we explore several symmetries in the combinatorics of the coefficients that arise in the Schur function expansion of $\mathrm{Q}_{m,n}(1)$. Especially, we study the hook-shaped Schur function coefficients, and the Schur function expansion of $\mathrm{Q}_{m,n}(1)$ in the case where $m$ or $n$ equals $3$.\\
	
	\noindent\textbf{Keywords: }Macdonald polynomials, parking functions, Dyck paths, Rational Shuffle Theorem
\end{abstract}

\section{Introduction}

The Rational Shuffle Theorem, as a rational generalization of the Shuffle Theorem, comes from the study of the ring of diagonal harmonics. Let $X=\{x_1,x_2,\ldots,x_n\}$ and $Y=\{y_1,y_2,\ldots,y_n\}$ be two sets of $n$ variables. The {\em ring of diagonal harmonics} consists of 
those polynomials in $\mathbb{Q}[X,Y]$ which satisfy 
the following system of differential equations
$$
\partial_{x_1}^a\partial_{y_1}^b\,f(X,Y)+\partial_{x_2}^a\partial_{y_2}^b\,f(X,Y)+\ldots+\partial_{x_n}^a\partial_{y_n}^b\,f(X,Y)=0
$$
for each pair of integers $a$ and $b$ such that {$a+b>0$}. Haiman in \cite{Haiman} proved that the ring of diagonal harmonics has {dimension} $(n+1)^{n-1}$, and the {\em bigraded Frobenius characteristic} of the $\Sn{n}$-module of diagonal harmonics, $DH_n(X;q,t)$, is given by
\begin{equation}
DH_n(X;q,t)=\nabla e_n,
\end{equation}
where $\nabla$ ({\em nabla}) is the symmetric function operator defined by Bergeron and Garsia \cite{GB}, and $e_n$ is the {\em elementary symmetric function} of degree $n$.

Let $n$ be a positive integer. An $(n,n)$-{\em Dyck path} $P$ is a lattice path from $(0,0)$ to $(n,n)$ which always remains weakly above the main diagonal $y=x$. 
Given a Dyck path $P$, we can get an {\em $(n,n)$-parking function} $\pi$ by labeling the cells east of and adjacent to the north steps of $P$ with integers $\{1,\ldots,n\}$ such that the labels are strictly increasing in each column. The set of parking functions of size $n$ is denoted by $\PFc_n$.  
\fref{PF} (a) gives an example of a $(5,5)$-parking function. 

\begin{figure}[ht!]
	\centering
	\begin{tikzpicture}[scale=0.6]
	\Dpath{0,0}{5}{5}{0,0,1,1,3,-1};
	\PFtext{0,0}{0/1,0/3,1/4,1/5,3/2};
	\node at (2.5,-.7) {(a)};
	\end{tikzpicture}
	\begin{tikzpicture}[scale=0.6]
	\PFmnu{0,0}{3}{5}{0/2,0/4,1/1,1/3,1/5,0/0};
	\node at (1.5,-.7) {(b)};
	\path (-4,0);
	\end{tikzpicture}
	\vspace*{-3mm}
	\caption{A $(5,5)$-parking function and a $(3,5)$-parking function.}
	\label{fig:PF}
\end{figure}
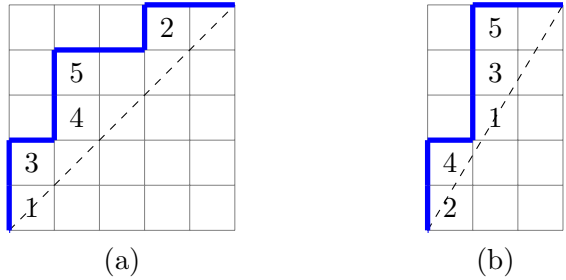

Let $F_{\alpha}[X]$ denote the fundamental quasi-symmetric function of Gessel \cite{gessel} associated to the composition $\alpha$, and let $\area$, $\dinv$, $\pides$ be statistics of parking functions.
The {\em Classical Shuffle Conjecture} proposed by Haglund, Haiman, Loehr, Remmel and Ulyanov \cite{HHLRU} gives a well-studied combinatorial expression for the bigraded Frobenius characteristic of the ring of diagonal harmonics. The Shuffle Conjecture has been proved by Carlsson and Mellit \cite{CM} as  the \emph{Shuffle Theorem} that for all $n\geq 0$, 
\begin{equation}
\nabla e_n=\sum_{\PF\in\PFc_n}t^{\area(\PF)}q^{\dinv(\PF)}F_{\pides(\PF)}[X].
\end{equation}

Let $m$ and $n$ be positive integers.  An \emph{$(m, n)$-Dyck path} is a lattice path from $(0,0)$ to $(m,n)$ which always remains weakly above the main diagonal $y = \frac{n}{m} x$. An $(m, n)$-\textit{parking function} $\PF$ is obtained by labeling the north steps of an $(m, n)$-Dyck path in a similar way to the $(n,n)$ case. 
\fref{PF} (b) gives an example of a $(3,5)$-parking function.
Let $\area$, $\dinv$, $\pides$ and $\ret$ be statistics of rational parking functions (will be defined later), then Bergeron, Garsia, Leven and Xin \cite{BGLX}
extended the combinatorial side of the Shuffle Theorem to the \emph{extended Hikita polynomial}
\begin{equation}
\Hikita_{m,n}[X; q, t]:=\sum_{\PF\in\PFc_{m,n}} [ret(\PF)]_{\frac{1}{t}}t^{\area(\PF)}q^{\dinv(\PF)}F_{\pides(\PF)}[X].
\end{equation}
Gorsky and Negut  introduced the symmetric function operator $\Qmn{m,n}$ for $m$ and $n$ coprime in \cite{GN} (where they call the operator $\widetilde{\mathcal{P}}_{m,n}$), and Bergeron,
Garsia, Leven and Xin \cite{BGLX} generalized the operator $\Qmn{m,n}$ so that the coprimality condition is removed, extending the algebraic side of the Shuffle Theorem from $\nabla e_n$ to $\Qmn{m,n}(1)$. 
The {\em Extended Rational Shuffle Theorem}  is that, 
for any pair of positive integers $(m,n)$, 
\begin{equation}\label{rationalshuffle}
\Qmn{m,n}(1)=\Hikita_{m,n}[X;q,t],
\end{equation}
which was proved by Mellit \cite{Mellit}.

A more important goal is to find the Schur function expansion of $\nabla e_n$ since that would allow us to find the bigraded $\Sn{n}$-isomorphism type
of the ring of diagonal harmonics, see \cite{Haiman}. More generally, we would like to find a combinatorial interpretation of the coefficients that arise in the Schur function expansion of $\Qmn{m,n}(1)$. The main goal of this paper is to find such Schur function expansions in the case where $m$ or $n$ equals 3. The Schur function expansion of $\Qmn{m,n}(1)$ in the case where $m$ and $n$ are coprime and either $m$ or $n$ equals 2 was given by Leven \cite{Em}. That is, let 
$\qtn{n}$ be  the $q,t$-analogue of the integer $n$ that 
\begin{equation}
\qtn{n}:=\frac{q^n-t^n}{q-t}=q^{n-1}+q^{n-2}t+\cdots+t^{n-1},
\end{equation}
then Leven \cite{Em} gave a proof of the following theorem. 

\begin{theorem}[Leven]
	For any integer $k\geq 0$,
	\begin{equation}\label{emily1}
	\Qmn{2k+1,2}(1)=\Hikita_{2k+1,2}[X; q, t]=\qtn{k}s_2+\qtn{k+1}s_{11},
	\end{equation}
	and
	\begin{equation}\label{emily2}
	\Qmn{2,2k+1}(1)=\Hikita_{2,2k+1}[X; q, t]=\sum_{r=0}^{k}\qtn{k+1-r}s_{2^r1^{2k+1-2r}}.
	\end{equation}
\end{theorem}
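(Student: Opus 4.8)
The plan is to invoke the Extended Rational Shuffle Theorem~\eqref{rationalshuffle}, which reduces both identities to the combinatorial assertion that the Hikita polynomials $\Hikita_{2k+1,2}[X;q,t]$ and $\Hikita_{2,2k+1}[X;q,t]$ have the stated Schur expansions. A useful preliminary remark is that both $(2k+1,2)$ and $(2,2k+1)$ are coprime, and when $m$ and $n$ are coprime the only lattice points of the segment $y=\tfrac{n}{m}x$ lying in $[0,m]\times[0,n]$ are its two endpoints; hence every $(m,n)$-Dyck path touches the diagonal only at $(0,0)$ and $(m,n)$, so $\ret(\PF)=1$ for every $\PF$ in $\PFc_{2k+1,2}$ and $\PFc_{2,2k+1}$. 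The factor $[\ret(\PF)]_{1/t}$ therefore drops out, and in both cases $\Hikita_{m,n}[X;q,t]=\sum_{\PF}t^{\area(\PF)}q^{\dinv(\PF)}F_{\pides(\PF)}[X]$. It remains only to enumerate two small families of rational parking functions and to evaluate $\area$, $\dinv$, and the reading word on each.

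For~\eqref{emily1} I would first classify the $(2k+1,2)$-Dyck paths: such a path has exactly two north steps, and remaining weakly above $y=\tfrac{2}{2k+1}x$ forces it to begin with a north step and to take its second north step in some column $a$ with $0\le a\le k$; call this path $D_a$. This yields $k+1$ Dyck paths, with $D_0$ carrying a single labeling and each $D_a$ ($a\ge1$) carrying two. I would compute $\area$ and $\dinv$ for the resulting $2k+1$ parking functions directly from the definitions, as explicit functions of $a$ and of which of $1,2$ sits in column $0$, and record each reading word (necessarily $12$ or $21$). Collecting, the parking functions with reading word $12$ feed $F_{(2)}[X]=s_2$ and those with reading word $21$ feed $F_{(1,1)}[X]=s_{11}$, and the two resulting sums of monomials $t^{\area}q^{\dinv}$ should be seen to equal $\qtn{k}$ and $\qtn{k+1}$ respectively --- which I expect to fall out once one checks that as $a$ increases by one, $\area$ increases by one while $\dinv$ decreases by one.

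For~\eqref{emily2} the relevant Dyck paths are $N^{a}EN^{2k+1-a}E$ with $k+1\le a\le 2k+1$, again $k+1$ in number, and a parking function on the path with parameter $a$ is precisely a choice of the $a$-element subset of $\{1,\dots,2k+1\}$ filling column $0$. I would fix the path (so $\area$ is constant on its labelings) and sum the $q^{\dinv}$-weighted quasisymmetric functions over all $\binom{2k+1}{a}$ labelings; this sum is a Schur-positive symmetric function supported exactly on the two-column ``fat hooks'' $2^r1^{2k+1-2r}$ with $0\le r\le 2k+1-a$, in parallel with the Littlewood--Richardson expansion $e_ae_{2k+1-a}=\sum_r s_{2^r1^{2k+1-2r}}$, each term carrying a single power of $q$ read off from $\dinv$. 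Summing over $a$ and restoring the $t^{\area}$ weights, the coefficient of $s_{2^r1^{2k+1-2r}}$ becomes a sum of $k+1-r$ monomials in $q$ and $t$, which I expect to telescope to $\qtn{k+1-r}$.

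The main obstacle, in both parts, is not the enumeration of paths but the precise evaluation of $\dinv$: the rational $\dinv$ is built from arm/leg inequalities among the cells lying under the path together with a dinv-correction term, and pinning down its exact value --- in~\eqref{emily2}, the exact exponent of $q$ attached to each fat hook --- is where the work lies. Schur-positivity itself is essentially free, either from the column structure of the labeled Dyck path or from the $e_ae_{2k+1-a}$ expansion, so the substance of the proof is matching the $(q,t)$-weights and verifying that they sum to $\qtn{k}$, $\qtn{k+1}$, and $\qtn{k+1-r}$.
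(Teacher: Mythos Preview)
The paper does not supply its own proof of this theorem; it is quoted from Leven~\cite{Em}. From the surrounding text one can infer what Leven actually did: the abstract says she ``gave a combinatorial proof,'' and in the proof of Theorem~\ref{theorem:2k2} the paper invokes her algebraic identity
\[
D_aD_b(1)=(-1)^{a+b}e_ae_b+(-1)^{a+b-1}M\sum_{i=1}^{b}\qtn{i}\,e_{a+i}e_{b-i}
\]
(Equation~(23) of~\cite{Em}) to handle the symmetric-function side. That is, Leven computed $\Qmn{m,n}(1)$ and $\Hikita_{m,n}[X;q,t]$ \emph{separately} and showed each equals the stated Schur expansion, thereby verifying the then-conjectural Rational Shuffle identity in these cases.

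Your route is logically different: you invoke Mellit's theorem to get $\Qmn{m,n}(1)=\Hikita_{m,n}$ for free and then compute only the Hikita side. This is valid but reverses the historical dependence --- Leven's result was evidence \emph{for} the conjecture, while your argument rests on Mellit's far deeper theorem. If the goal is an independent check of the $n=2$ or $m=2$ case, you would also need the $\Qmn{}$ side directly; for~\eqref{emily2} this is a one-line consequence of Leven's $D_aD_b(1)$ formula via $\Qmn{2,2k+1}=\tfrac{1}{M}[D_k,D_{k+1}]$, and for~\eqref{emily1} one uses $\Qmn{2k+1,2}=\nabla^{k}\Qmn{1,2}\nabla^{-k}$ as in Lemma~\ref{lemma:1}.

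On the combinatorial sketch itself: your enumeration of paths and labelings is correct in both parts, and the expected one-for-one trade-off between $\area$ and $\dinv$ along each family is indeed what produces the $q,t$-integers. But the proposal stops short of actually evaluating $\dinv$, and for~\eqref{emily2} the appeal to $e_ae_{2k+1-a}=\sum_r s_{2^r1^{2k+1-2r}}$ is only a heuristic for which Schur functions appear --- you still have to identify, path by path, which labelings survive the replacement $F_\alpha\mapsto s_\alpha$ and straightening, and with what $q$-weight. That bookkeeping is the whole content of the combinatorial argument and is not yet present in your outline.
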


We want to write the coefficients of Schur functions in $\Qmn{m,n}(1)$ as symmetric functions in variables $(q,t)$.
A $(q,t)$-Schur function $\sqt{\la}$ is non-zero only if the partition $\la$ has no more than two parts, and 
\begin{equation}
\sqt{(a,b)} = (qt)^b \qtn{a-b+1}.
\end{equation}
Thus, the right hand side of Equations (\ref{emily1}) and (\ref{emily2}) can be written as $\sqt{k-1} s_2+\sqt{k} s_{11}$ and $\sum_{r=0}^{k}\sqt{k-r}s_{2^r1^{2k+1-2r}}$ respectively.

In fact, the coefficients of Schur functions in $\Qmn{m,n}(1)$ as $(q,t)$-Schur functions are \emph{Schur positive} due to the work of Bergeron \cite{B1} that the ring of diagonal harmonics forms both an $S_n$-module and a $GL_2$-module which commute with each other.

By the Extended Rational Shuffle Theorem formulated in \cite{BGLX}, we can extend Leven's theorem to compute the Schur function expansion of $\Qmn{m,n}(1)$ where either $m$ or $n$ is equal to 2,  
but $m$ and $n$ are not coprime. 
We give a proof of the following theorem in Section 2.2. 
\begin{theorem}\label{theorem:2k2}
	For any integer $k> 0$,
	\begin{equation}\label{eq1}
	\Qmn{2k,2}(1)=\Hikita_{2k,2}[X; q, t]=\left(\sqt{k-1}+\sqt{k-2}\right)s_2+\left(\sqt{k}+\sqt{k-1}\right)s_{11},
	\end{equation}
	and
	\begin{equation}\label{eq2}
	\Qmn{2,2k}(1)=\Hikita_{2,2k}[X; q, t]=\sum_{r=0}^{k}\left(\sqt{k-r}+\sqt{k-r-1}\right)s_{2^r1^{2k+1-2r}}.
	\end{equation}
\end{theorem}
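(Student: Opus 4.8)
The plan is to use the Extended Rational Shuffle Theorem \eqref{rationalshuffle} to replace $\Qmn{2k,2}(1)$ and $\Qmn{2,2k}(1)$ by the Hikita polynomials $\Hikita_{2k,2}[X;q,t]$ and $\Hikita_{2,2k}[X;q,t]$, and then to evaluate these parking-function sums by hand, in the spirit of Leven's proof of the coprime cases \eqref{emily1} and \eqref{emily2}. The underlying Dyck paths are highly constrained. A $(2k,2)$-Dyck path must begin with a north step and may take at most $k$ east steps before its second north step, so it has the form $NE^aNE^{2k-a}$ with $a\in\{0,1,\dots,k\}$. A $(2,2k)$-Dyck path may take no east step until it has reached height $k$, and may take its last east step only at height $2k$, so it has the form $N^{2k-r}EN^rE$ with $r\in\{0,1,\dots,k\}$; write $P_r$ for this path. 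In each family exactly one path — the one with $a=k$, respectively $r=k$ — passes through the interior lattice point of the diagonal, namely $(k,1)$, respectively $(1,k)$, and one checks from the definition of $\ret$ that $\ret$ equals $1$ on that path and equals $2$ on every other path of these two families.

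For \eqref{eq1} this is already enough. The path $NNE^{2k}$ carries a unique parking function, since its two north steps lie in one column and the labels $1,2$ must increase; each path $NE^aNE^{2k-a}$ with $a\ge1$ carries two parking functions; hence $|\PFc_{2k,2}|=2k+1$. For each of these parking functions one reads off $\area$, $\dinv$, and $\pides$ directly; as there are only two labels, $\pides$ is the descent composition of a permutation of $\{1,2\}$, so the only fundamental quasi-symmetric functions that occur are $F_{(2)}=s_2$ and $F_{(1,1)}=s_{11}$. Collecting the $2k+1$ weighted terms — where the factor $[\ret]_{1/t}$ equals $1+t^{-1}$ on all of these paths except the one through $(k,1)$, on which it equals $1$ — produces the coefficients $\sqt{k-1}+\sqt{k-2}$ of $s_2$ and $\sqt{k}+\sqt{k-1}$ of $s_{11}$. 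As a check, these are exactly the termwise sums of Leven's coefficients in \eqref{emily1} for $\Qmn{2k+1,2}(1)$ and $\Qmn{2k-1,2}(1)$, so \eqref{eq1} is equivalent to the Hikita-polynomial identity $\Hikita_{2k,2}=\Hikita_{2k+1,2}+\Hikita_{2k-1,2}$, which one could also prove by a direct matching of parking functions.

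For \eqref{eq2} one organizes $\PFc_{2,2k}$ by Dyck path. A parking function supported on $P_r$ is simply a choice of the $r$-element set of labels that go, in increasing order, in the short right-hand column, so $P_r$ carries $\binom{2k}{r}$ parking functions; moreover $\area(P_r)$, $\ret(P_r)$, and the minimum value $d_r$ of $\dinv$ over these labelings depend only on $r$. With $r$ fixed, $\sum_{\pi\text{ on }P_r}q^{\dinv(\pi)-d_r}F_{\pides(\pi)}[X]$ is the LLT polynomial determined by $P_r$, namely that of a pair of single columns of heights $2k-r$ and $r$; at $q=1$ it specializes to $e_{2k-r}e_r=\sum_{j=0}^{r}s_{2^j1^{2k-2j}}$, and for general $q$ it is the corresponding $q$-analogue, a nonnegative-integer combination of the two-column Schur functions $s_{2^j1^{2k-2j}}$. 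Weighting by $[\ret(P_r)]_{1/t}\,t^{\area(P_r)}q^{d_r}$, summing over $r\in\{0,\dots,k\}$, and regrouping by Schur function — the function $s_{2^\rho1^{2k-2\rho}}$ picking up a contribution from every $P_r$ with $r\ge\rho$ — one is left with a telescoping sum that collapses to the coefficient $\sqt{k-\rho}+\sqt{k-\rho-1}$, equivalently $\qtn{k-\rho+1}+\qtn{k-\rho}$. Equivalently, \eqref{eq2} amounts to the identity $\Hikita_{2,2k}=e_1^\perp\,\Hikita_{2,2k+1}$, which matches Leven's \eqref{emily2} through the branching rule $e_1^\perp s_{2^r1^{2k+1-2r}}=s_{2^r1^{2k-2r}}+s_{2^{r-1}1^{2k-2r+2}}$ and which could be proved directly by a deletion-of-the-largest-label argument tracking $\area$, $\dinv$, $\ret$, and $\pides$.

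The main obstacle lies in the bookkeeping for \eqref{eq2}. Unlike \eqref{eq1}, which involves finitely many parking functions and only two Schur functions, here one must first determine the $q$-Schur expansion of the two-column LLT polynomials together with the correct shift $d_r$, then combine these with the $t$-powers coming from $\area$ and the Laurent factors $[\ret(P_r)]_{1/t}$, and finally reorganize the whole double sum by Schur function and verify that it telescopes to $\qtn{k-\rho+1}+\qtn{k-\rho}$; getting the normalizations of the rational statistics right on the non-coprime paths $P_r$ is where most of the care goes. Should one instead follow the $e_1^\perp$ reformulation, essentially all the difficulty is transferred to establishing $\Hikita_{2,2k}=e_1^\perp\Hikita_{2,2k+1}$ at the combinatorial level, after which the deduction from Leven's theorem is a routine symmetric-function computation.
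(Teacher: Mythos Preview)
Your approach is genuinely different from the paper's. The paper works entirely on the symmetric-function side: for \eqref{eq1} it uses $\Qmn{2k,2}=\nabla^{k-1}\Qmn{2,2}\nabla^{-(k-1)}$ (their Lemma on $\nabla\Qmn{kn,n}\nabla^{-1}=\Qmn{(k+1)n,n}$), computes $\Qmn{2,2}(1)=s_2+(q+t+1)s_{11}$ by hand, and then applies closed formulas for $\nabla^n s_2$ and $\nabla^n s_{11}$; for \eqref{eq2} it expands $\Qmn{2,2k}=\frac{1}{M}[\Qmn{1,k+1},\Qmn{1,k-1}]$ and plugs in Leven's identity for $D_aD_b(1)$. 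No parking functions appear anywhere. Your route, by contrast, computes the Hikita side directly. For \eqref{eq1} this is arguably cleaner than the paper's induction --- there are only $2k+1$ parking functions and two Schur functions, and your observation that the answer is literally $\Hikita_{2k+1,2}+\Hikita_{2k-1,2}$ is a nice structural explanation that the algebraic proof obscures. The paper's method, on the other hand, gives \eqref{eq2} almost for free once Leven's $D_aD_b(1)$ formula is available, whereas your combinatorial route has to earn it.

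That said, your argument for \eqref{eq2} is not yet a proof. You correctly identify the paths $P_r$, their $\ret$ values, and the LLT interpretation of the fixed-path sum, but the two substantive steps --- writing down the Schur expansion of the two-column LLT polynomial (including the shift $d_r$) and then checking that the weighted sum over $r$ actually telescopes to $\qtn{k-\rho+1}+\qtn{k-\rho}$ --- are asserted rather than carried out. You flag this yourself as ``the main obstacle,'' which is honest, but it means the proposal for \eqref{eq2} is a plan rather than an argument. The alternative reformulation $\Hikita_{2,2k}=e_1^\perp\Hikita_{2,2k+1}$ is attractive and would reduce everything to Leven's \eqref{emily2}, but establishing it combinatorially (tracking $\dinv$ and $\ret$ under deletion of the largest label on a non-coprime grid) is itself nontrivial and is likewise only sketched. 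To complete your approach you need to execute one of these two computations in full.
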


The coefficient at $s_{1^n}$ in $\Qmn{m,n}(1)$ is known as the rational $q,t$-Catalan number, computed by Gorsky and Mazin \cite{GM} for the case $n=3$ and studied by Lee, Li and Loehr \cite{LLL} for the case $n=4$. The coefficients at hook-shaped Schur functions were discussed by Armstrong, Loehr and Warrington \cite{ALW}.

In this paper, we explore the combinatorics of the Schur 
function expansion of $\Qmn{m,n}(1)$ in several special cases.

In Section 2, we provide backgrounds of the problem in both combinatorial side (parking function side) and algebraic side (symmetric function side). Then in Section 3, we prove several symmetries of the coefficients of Schur functions in the Extended Rational Shuffle Theorem. Let $[s_\lambda]_{m,n}$ be the coefficient of the Schur function $s_{\lambda}$ in both $\Qmn{m,n}(1)$ and $\Hikita_{m,n}[X; q, t]$, then we can combinatorially prove
\begin{theorem}\label{theorem:3} For all $m,n > 0$ and $\lambda'\vdash (n-am)$,
	\begin{enumerate}[(a)]
		\item $[s_{1^n}]_{m,n}=[s_{n}]_{m+n,n}$,
		\item $[s_{m^a\lambda'}]_{m,n}=[s_{\lambda'}]_{m,n-a m}$,
		\item $[s_{k1^{n-k}}]_{m,n}=[s_{k1^{m-k}}]_{n,m}$.
	\end{enumerate}
\end{theorem}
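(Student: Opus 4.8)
\emph{Strategy and setup.} The plan is to stay on the combinatorial side throughout. By the Extended Rational Shuffle Theorem~(\ref{rationalshuffle}) we may replace $\Qmn{m,n}(1)$ by $\Hikita_{m,n}[X;q,t]$ and then extract Schur coefficients from its fundamental‑quasisymmetric expansion. Since an $(m,n)$‑parking function labels its $n$ north steps bijectively by $\{1,\dots,n\}$, the composition $\pides(\PF)$ is the descent composition of a genuine reading‑word permutation, so the standard Gessel/Haglund–Haiman–Loehr mechanism applies and yields $[s_\lambda]_{m,n}=\sum [\ret(\PF)]_{\frac1t}t^{\area(\PF)}q^{\dinv(\PF)}$, summed over the $(m,n)$‑parking functions whose reading word is ``$\lambda$‑standard.'' I would then record the four specializations we need: a column shape forces the (unique) strictly decreasing labeling of each $(m,n)$‑Dyck path, recovering the rational $q,t$‑Catalan number; a one‑row shape forces strictly increasing labelings, which exist only on the $(m,n)$‑Dyck paths with no two consecutive north steps, and then are unique; a hook forces reading words that are shuffles of one increasing run of length $k$ with one decreasing run of length $n-k$; and a shape $m^a\lambda'$ forces a rigid ``corner block'' together with a $\lambda'$‑standard remainder. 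Finally, each $[s_\lambda]_{m,n}$ is symmetric in $q,t$ (it is Schur‑positive in the $(q,t)$‑Schur functions), so any bijection below only needs to match the pair $(\area,\dinv)$ up to interchanging the two statistics; I expect to use this freedom in all three parts.

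\emph{Part (a).} I would prove $[s_{1^n}]_{m,n}=[s_{n}]_{m+n,n}$ by the surgery ``delete the east step immediately following each north step,'' which sends an $(m+n,n)$‑Dyck path with no two consecutive north steps (exactly the paths carrying a strictly increasing labeling) to an $(m,n)$‑Dyck path; its inverse inserts one east step after each north step. Writing such a path as $N E^{a_1} N E^{a_2}\cdots N E^{a_n}$ one checks this is well defined (a path to $(m+n,n)$ staying weakly above $y=\frac{n}{m+n}x$ cannot end in a north step, so each north step is followed by an east step) and is a bijection onto all $(m,n)$‑Dyck paths, because the ``stay above the diagonal'' inequalities translate into one another verbatim. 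It preserves $\ret$ since $\gcd(m+n,n)=\gcd(m,n)$ and the pattern of diagonal returns is untouched, and it matches $(\area,\dinv)$ up to the $q\leftrightarrow t$ symmetry. The last point is the only real content, and I would carry it out with the horizontal‑/vertical‑strip bookkeeping ($\hstr$, $\vstr$) of Section~3 (or by invoking known rational $q,t$‑Catalan identities).

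\emph{Part (b).} This is the most routine case, and I would do it first as a warm‑up for the bookkeeping needed elsewhere. When $\lambda=m^a\lambda'$, the $a$ leading parts of maximal size $m$ force the $(m,n)$‑Dyck path to contain a rigid sub‑path (a canonical $(m,am)$‑block) carrying a rigid labeling; excising it is a bijection from the $\lambda$‑standard $(m,n)$‑parking functions onto the $\lambda'$‑standard $(m,n-am)$‑parking functions. One then checks that $\ret$, $\area$ and $\dinv$ are respected, so the two generating functions agree term by term, which gives $[s_{m^a\lambda'}]_{m,n}=[s_{\lambda'}]_{m,n-am}$.

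\emph{Part (c) and the main obstacle.} For the hook symmetry I would use the ``transpose, then rotate by $180^\circ$'' map, a bijection $\Dyck_{m,n}\leftrightarrow\Dyck_{n,m}$ that turns the $n$ north steps into the $n$ former east steps and vice versa (it swaps the roles of $m$ and $n$ and exchanges $\hstr$ with $\vstr$), lifted to hook parking functions so that the length‑$k$ increasing run (the ``arm'') is preserved while the decreasing run (the ``leg'') is resized from $n-k$ to $m-k$ — precisely the passage from $k1^{n-k}$ to $k1^{m-k}$. Transposition interchanges $\area$ and $\dinv$, which is harmless by the $q\leftrightarrow t$ symmetry. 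The genuinely delicate points — and what I expect to be the main obstacle of the whole theorem — are that the map a priori does not act on the labels at all, so one must show the leg labels are carried along consistently, and that the factor $[\ret(\PF)]_{\frac1t}$ is transported to the correct $\ret$ on the $(n,m)$‑side. I would resolve these by refining the bijection in the spirit of Armstrong–Loehr–Warrington's analysis of hook coefficients and sweep maps, sanity‑checking it against the case $k=1$, which is exactly the symmetry $\mathrm{Cat}_{m,n}(q,t)=\mathrm{Cat}_{n,m}(q,t)$ of the rational $q,t$‑Catalan numbers.
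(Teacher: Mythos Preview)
Your overall architecture---work on the combinatorial side, build bijections at the level of parking functions, check $\area$, $\dinv$, $\ret$---matches the paper's. Part~(b) is essentially the paper's argument (the inverse of the vertical stretch $\vstr$), and your map in part~(a) is the inverse of the paper's horizontal stretch $\hstr$. But there are two genuine problems.

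\textbf{Part (c): a false claim about transposition.} You assert that ``transposition interchanges $\area$ and $\dinv$,'' and then lean on $q\leftrightarrow t$ symmetry. This is simply wrong: the transpose (with or without the $180^\circ$ rotation) of an $(m,n)$-Dyck path preserves $\area$, since full cells between the path and the diagonal go to full cells between the transposed path and the transposed diagonal. The paper's argument is that \emph{both} statistics are preserved exactly. For $k=1$ (word $n\cdots21$) one has $\tdinv=\maxdinv$, so $\dinv=\pdinv$, and the arm/leg formula for $\pdinv$ is visibly symmetric in $m$ and $n$. For a general hook $k1^{n-k}$, the nontrivial content is matching the $\binom{j-1}{k-1}$ choices of columns for cars $1,\dots,k-1$ on the $(m,n)$ side with the corresponding choices on the $(n,m)$ side so that the defect $\tdinv-\maxdinv$ agrees; the paper does this by pairing ``column-bottom'' cells (circles) with ``row-start'' cells (crosses) through the valley points of the path. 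You have not identified this mechanism, and your stated statistic behavior would not give it to you.

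\textbf{Part (a): the key step is deferred, circularly.} You correctly describe the bijection, but say the $(\area,\dinv)$ matching ``is the only real content'' and propose to carry it out ``with the horizontal-/vertical-strip bookkeeping of Section~3''---which is exactly the theorem you are proving---or by invoking rational $q,t$-Catalan identities, which is heavier than needed. The paper gives a direct cell-by-cell argument: inserting the staircase creates, for each pair of rows, exactly one new $\dinvcorr$ cell precisely when the corresponding pair of cars contributed a $\tdinv$ in the original parking function; hence $\dinv$ is preserved exactly, and no $q\leftrightarrow t$ swap is involved. Your plan to allow an area/dinv swap ``in all three parts'' is never needed, and in (c) it rests on a false premise.
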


\noindent Nakagane \cite{Nakagane} obtained a similar result to \tref{3} independently.

In Section 4, we prove the following theorem to give explicit formulas for the Schur function expansion of $\Qmn{m,3}(1)$ from both symmetric function side and combinatorial side. 
\begin{theorem}\label{theorem:1}For any integer $k\geq 0$,
	\begin{multline}\label{maineqn}
	\Qmn{3k+1,3}(1)=\Hikita_{3k+1,3}[X; q, t]=\left(\sum_{i=0}^{k-1}\sqt{(k+2i-1,k-i-1)}\right)s_3\\
	+\left(\sum_{i=0}^{k-1}\left(
	\sqt{(k+2i,k-i-1)}+\sqt{(k+2i+1,k-i-1)}
	\right)\right)s_{21}+\left(\sum_{i=0}^{k}\sqt{(k+2i,k-i)}\right)s_{111},
	\end{multline}
	\begin{multline}\label{maineqn2}
	\Qmn{3k+2,3}(1)=\Hikita_{3k+2,3}[X; q, t]=\left(\sum_{i=0}^{k-1}\sqt{(k+2i,k-i-1)}\right)s_3\\
	+\left(\sum_{i=-1}^{k-1}\left(
	\sqt{(k+2i+1,k-i-1)}+\sqt{(k+2i+2,k-i-1)}\right)
	\right)s_{21}+\left(\sum_{i=0}^{k}\sqt{(k+2i+1,k-i)}\right)s_{111},
	\end{multline}
	\begin{multline}\label{maineqn3}
	\Qmn{3k,3}(1)=\Hikita_{3k,3}[X; q, t]=\left(\sum_{i=0}^{k-1}
	(\sqt{(k+2i-3,k-i-1)}+\sqt{(k+2i-2,k-i-1)}\right.\\
	+\sqt{(k+2i-1,k-i-1)})
	\bigg)s_3
	+\bigg(\sqt{(k+1,k-1)}+2\sqt{(k,k-1)}+\sqt{(k-1,k-1)}\\
	\left.
	+\sum_{i=1}^{k-1}
	\left(\sqt{k+2i-2,k-i-1}+2\sqt{k+2i-1,k-i-1}+2\sqt{k+2i,k-i-1}+\sqt{k+2i+1,k-i-1}\right)
	\right)s_{21}\\
	+\left(\sum_{i=0}^{k}
	(\sqt{k+2i-2,k-i}+\sqt{k+2i-1,k-i}+\sqt{k+2i,k-i})\right)s_{111}.
	\end{multline}	
\end{theorem}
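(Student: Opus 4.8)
\emph{Proof strategy.} The plan is to compute $\Hikita_{m,3}[X;q,t]$ combinatorially and read off its Schur expansion; by the Extended Rational Shuffle Theorem, Equation~\eqref{rationalshuffle}, this simultaneously determines $\Qmn{m,3}(1)$. Since $\Hikita_{m,3}[X;q,t]$ is a symmetric function of degree~$3$, it equals $c_3\,s_3+c_{21}\,s_{21}+c_{111}\,s_{111}$ for polynomials $c_\lambda$ in $q,t$, which are $(q,t)$-Schur positive by Bergeron's $GL_2\times S_n$ structure on diagonal harmonics \cite{B1}. Comparing with the quasi-symmetric expansion
\[
\Hikita_{m,3}[X;q,t]=\sum_{\PF\in\PFc_{m,3}}[\ret(\PF)]_{1/t}\,t^{\area(\PF)}q^{\dinv(\PF)}F_{\pides(\PF)}[X]
\]
and using $s_3=F_{(3)}$, $s_{21}=F_{(2,1)}+F_{(1,2)}$, $s_{111}=F_{(1,1,1)}$, one obtains that $c_3$ (resp.\ $c_{21}$, $c_{111}$) is the generating function $\sum_{\PF}[\ret(\PF)]_{1/t}\,t^{\area(\PF)}q^{\dinv(\PF)}$ over the $(m,3)$-parking functions with $\pides(\PF)=(3)$ (resp.\ $(2,1)$, $(1,1,1)$); the value obtained from $\pides(\PF)=(1,2)$ agrees with the one from $(2,1)$ for free, since $\Hikita_{m,3}$ is symmetric. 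So the whole problem reduces to a finite bookkeeping over $(m,3)$-parking functions classified by $\pides$.

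Next I would enumerate these parking functions. An $(m,3)$-Dyck path has exactly three north steps, the first of which must lie in the leftmost column (the path starts on the diagonal), so the path is encoded by the $x$-coordinates $0=x_1\le x_2\le x_3$ of its three north steps, subject only to $x_2\le\lfloor m/3\rfloor$ and $x_3\le\lfloor 2m/3\rfloor$; a parking function over it is a placement of $1,2,3$ on rows $1,2,3$ with labels increasing up columns, giving $6$, $3$, $3$, or $1$ labelings according to whether $x_1<x_2<x_3$, $x_1=x_2<x_3$, $x_1<x_2=x_3$, or $x_1=x_2=x_3$. With only three rows the statistics are elementary: $\area=A_{\max}(m)-x_2-x_3$ for an explicit constant $A_{\max}(m)$; $\ret$ is trivial ($\ret\equiv 1$, so $[\ret]_{1/t}=1$) unless $3\mid m$, since for $\gcd(m,3)=1$ the path meets the diagonal only at its endpoints, while for $m=3k$ it may also touch at $(k,1)$ and/or $(2k,2)$; $\pides$ is the descent composition of the diagonal reading word of the labeled path; and the rational statistic $\dinv$, although defined through $\tdinv$, $\maxdinv$ and $\dinvcorr$, reduces to a small correction depending only on $(x_2,x_3)$ and the labeling type. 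I would tabulate $(\dinv,\pides)$ for each of the four labeling types as explicit functions of $(x_2,x_3)$.

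The computation then splits into the residue classes $m=3k+1$, $m=3k+2$, $m=3k$. In each, one sums $t^{\area}q^{\dinv}$ --- weighted by $[\ret]_{1/t}$ in the last class --- over the admissible $(x_2,x_3)$ with prescribed $\pides$, and matches the result to the claimed $(q,t)$-Schur combination via $\sqt{(a,b)}=(qt)^b\,\qtn{a-b+1}$ and $\qtn{c}=q^{c-1}+q^{c-2}t+\cdots+t^{c-1}$: each term $\sqt{(a,b)}$ is the generating function of a rectangular array of monomials, so the identity amounts to showing that the multiset of $(\area,\dinv)$-pairs attached to a fixed $\pides$ tiles into such rectangles indexed by the variable $i$ appearing in \eqref{maineqn}--\eqref{maineqn3}. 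As an independent check one can use \tref{3}(c), which identifies $[s_3]_{m,3}$, $[s_{21}]_{m,3}$, $[s_{111}]_{m,3}$ with the hook coefficients $[s_{31^{m-3}}]_{3,m}$, $[s_{21^{m-2}}]_{3,m}$, $[s_{1^m}]_{3,m}$ of $\Qmn{3,m}(1)$ --- the last being the $(3,m)$ rational $q,t$-Catalan number of Gorsky and Mazin \cite{GM} --- and one may equally run the argument on the ``three-column'' $(3,m)$-parking functions, comparing small cases directly.

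The step I expect to be the main obstacle is $m=3k$. There the Dyck path may touch the main diagonal at $(k,1)$ and/or $(2k,2)$, so $[\ret(\PF)]_{1/t}$ genuinely contributes negative powers of $t$, the count of $(3k,3)$-parking functions is no longer $(3k)^2$, and the clean rectangular tiling is disrupted --- exactly why the $s_{21}$-coefficient in \eqref{maineqn3} carries the nonuniform multiplicities $1,2,2,1$. Isolating the boundary-touching paths, checking that the $t^{-1}$ contributions recombine with the $t^{\area}$ factors into honestly $(q,t)$-symmetric (hence $(q,t)$-Schur positive) coefficients, and reorganizing the resulting double sum into the three- and four-term bracketed expressions of \eqref{maineqn3}, is where most of the verification lies.
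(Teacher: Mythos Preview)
Your combinatorial plan for $\Hikita_{m,3}[X;q,t]$ is essentially the paper's Section~4.2: the paper also parametrizes $(m,3)$-parking functions by the partition $\lambda(\Pi)=(\lambda_1,\lambda_2)$ (equivalently your $(x_2,x_3)$), writes down closed formulas for $\area$ and $\dinvcorr$ in terms of $(\lambda_1,\lambda_2)$, and then---exactly as you anticipate---organizes the parking functions with a fixed diagonal word into explicit branches $\Lambda_1,\Lambda_2,\Lambda_3$ (six branches for the $s_{21}$-coefficient) whose weights telescope into the individual terms $(qt)^{k-1-i}\qtn{3i+1}$, etc. Two shortcuts the paper exploits that you do not: it deduces $[s_{111}]_{m,3}=[s_3]_{m+3,3}$ directly from \tref{3}(a), avoiding a separate computation; and for $m=3k$ it observes that the admissible paths, areas, and dinvs coincide with those of the $(3k+1,3)$ case, so only the factor $[\ret(\PF)]_{1/t}$ has to be overlaid on the already-constructed $\Lambda_j$'s---this is why the $3k$ case is in fact the shortest subsection, not the longest.

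The genuine divergence is on the $\Qmn{m,3}(1)$ side. You invoke Mellit's Extended Rational Shuffle Theorem (Equation~\eqref{rationalshuffle}) to carry the Schur expansion from $\Hikita_{m,3}$ to $\Qmn{m,3}(1)$. The paper instead gives a second, independent proof on the symmetric-function side (Section~4.1): using \lref{1} and \lref{newlemma} one has $\Qmn{m+3,3}(1)=\nabla\,\Qmn{m,3}(1)$, and since $\nabla s_3$, $\nabla s_{21}$, $\nabla s_{111}$ are known explicitly, the claimed formulas follow by induction on $k$ together with the $(q,t)$-Schur identity of \lref{2}. The payoff is that the paper's two proofs together \emph{independently} establish $\Qmn{m,3}(1)=\Hikita_{m,3}[X;q,t]$ for all $m$, i.e.\ they re-prove the Rational Shuffle Theorem for $n\le 3$ without appealing to Mellit; your route is logically valid but forfeits that independence.
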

\noindent Note that this independently proves the Shuffle Theorem and the Extended Rational Shuffle Theorem when $n\leq 3$.

In Section 5, we prove several Schur function coefficient formulas and symmetries in $\Qmn{3,n}(1)$ (some of which are consequences of \tref{3}), and conjecture a concise recursive formula for Schur function coefficients $[s_\lambda]_{3,n}$ generally for any $\lambda\vdash n$.
In particular, we study a new symmetry that 
\begin{equation}
[s_{2^a1^b}]_{3,n}=[s_{2^b1^a}]_{3,3(a+b)-n},
\end{equation}
and a combinatorial action on parking functions called the switch map $\swi$.

\section{Background}
To state our results, we shall first introduce details about the Extended Rational Shuffle Theorem. This 
will require a series of definitions. We omit the word ``Extended" as long as it will not cause any ambiguity.
\subsection{Combinatorial side}

Let $m$ and $n$ be positive integers.  The set of $(m, n)$-Dyck paths is denoted by $\Dyck_{m,n}$.
For an $(m, n)$-Dyck path, the cells that are cut through by the main diagonal will be called  \textit{diagonal cells}. \fref{2} (a) gives an example of a $(5,7)$-Dyck path, and \fref{2} (b) gives an example of a $(4,6)$-Dyck path, where the diagonal cells are shaded.

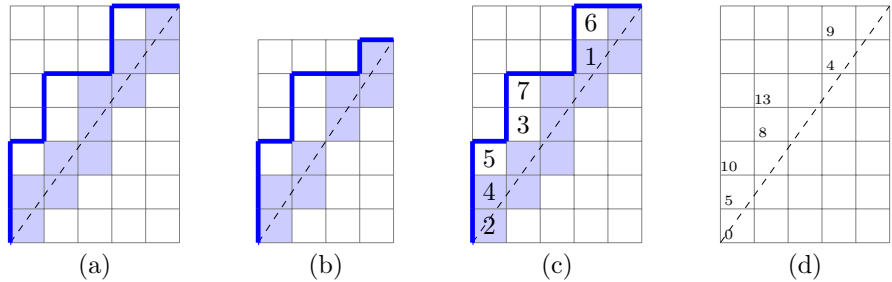
\begin{figure}[ht]
	\centering
	\vspace{-6mm}	
	\scalebox{.9}{\begin{tikzpicture}[scale=0.5]
		\fillshade{1/1,1/2,2/2,2/3,3/3,3/4,3/5,4/5,4/6,5/6,5/7}
		\Dpath{0,0}{5}{7}{0,0,0,1,1,3,3,-1};
		\node at (2.5,-.7) {(a)};
		\path (7,1);
		\end{tikzpicture}
		\begin{tikzpicture}[scale=0.5]
		\fillshade{1/1,1/2,2/2,2/3,3/4,3/5,4/5,4/6}
		\Dpath{0,0}{4}{6}{0,0,0,1,1,3,-1};
		\node at (2,-.7) {(b)};
		\path (6,1);
		\end{tikzpicture}
		\begin{tikzpicture}[scale=0.5]
		\fillshade{1/1,1/2,2/2,2/3,3/3,3/4,3/5,4/5,4/6,5/6,5/7}
		\PFmnu{0,0}{5}{7}{0/2,0/4,0/5,1/3,1/7,3/1,3/6,0/0};
		\node at (2.5,-.7) {(c)};
		\end{tikzpicture}
		\begin{tikzpicture}[scale=0.5]
		\path (-2,0);
		\PFtextr{0,0}{5}{7}{0/0,0/5,0/10,1/8,1/13,3/4,3/9};
		\node at (2.5,-.7) {(d)};
		\end{tikzpicture}}
	\caption{\fontsize{10.5}{0}\selectfont A $(5,7)$-Dyck path, a $(4,6)$-Dyck path, a $(5,7)$-parking function and its car ranks.}
	\label{fig:2}
\end{figure}

For an $(m,n)$-Dyck path, we have the statistic area defined as follows.

\begin{definition}[$\area$]
	The number of full cells between an $(m, n)$-Dyck path $P$ and the main diagonal is denoted by $area(P)$.
\end{definition}

The cells above a Dyck path $P$ are called \emph{coarea cells} of $P$, and they form a Ferrers diagram (in English notation) of a partition $\lambda(P)$. In the example in \fref{2} (a), $\lambda(P)=(3,3,1,1)$ or\ \  \raisebox{-.3\height}{\begin{tikzpicture}[scale=0.2] \draw (3,0) grid (0,2) grid (1,-2);
	\end{tikzpicture}}. 

For any partition $\mu$ and any cell $c$ in the Ferrers diagram of $\mu$, we let $arm(c)$ be the number of cells to the right of $c$ in $\mu$ and $leg(c)$ be the number of cells below $c$ in $\mu$. Let $\chi(x)$ denote the function that takes value $1$ if its argument $x$ is true, and $0$ otherwise, then we can define the path dinv $(\pdinv)$ statistic of an $(m, n)$-Dyck path.

\begin{definition}[pdinv]The $\pdinv$ of an $(m, n)$-Dyck path $P$ is given by 
	$$
	\pdinv(P) := \sum_{c\in\lambda(P)}\chi\left(\frac{arm(c)}{leg(c)+1}\leq\frac{m}{n}<\frac{arm(c)+1}{leg(c)}\right).
	$$
\end{definition}

We can get an $(m, n)$-{parking function} $\PF$ by labeling the north steps of an $(m, n)$-Dyck path
with the integers $\{1, \ldots,n\}$ such that the numbers increase in each column from bottom to top, and we will refer to these labels as \textit{cars}. 
The underlying Dyck path is denoted by $\Pi(\PF)$, and the partition formed by the collection of cells above the path $\Pi(\PF)$ is denoted by $\lambda(\PF)$. The set of $(m, n)$-parking functions is denoted by $\PFc_{m,n}$.
\fref{2} (c)  pictures a $(5,7)$-parking function based on the $(5,7)$-Dyck path in \fref{2} (a).

Next we define statistics $\ides$ and $\pides$ for rational parking functions. 
For any pair of coprime positive integers $m$ and $n$, we define the {\em rank} of a cell $(x,y)$ in the $(m,n)$-grid to be $rank(x,y):=my-nx$. If $m$ and $n$ are not coprime, we shall generalize the rank to be $rank(x,y):=my-nx+\lfloor \frac{x \gcd(m,n)}{m}\rfloor$. \fref{2} (d) shows the ranks of cars in \fref{2} (c). The {\em word} (or {\em diagonal word}), $\sigma(\PF)$ (or $\word(\PF)$),  of $\PF$, is obtained by reading cars from highest to lowest ranks. In our example in \fref{2} (c), $\sigma(\PF)=7563412$. We define 
$\ides(\PF)$ to be the descent set of $\sigma(\PF)^{-1}$. In other words, we have
\begin{definition}[$\ides$] Let $\PF$ be any parking function, then 
\begin{align*}
\ides(\PF)&:=\{i\in\sigma(\PF):i+1 \mbox{ is to the left of } i\mbox{ in }\sigma(\PF)\}\\
&=\{i:\rank(i)<\rank(i+1)\}.
\end{align*}
\end{definition}
Then we define $\pides(\PF)$ to be the composition set of $\ides(\PF)$.
\begin{definition}[$\pides$] For any $(m, n)$-parking function $\PF$, if $\ides(\PF)=\{i_1<i_2<\cdots<i_d\}$, then
$$\pides(\PF):=\{i_1,i_2-i_1,\ldots,n-i_d\}.$$
\end{definition}
In \fref{2} (c), we have $\ides(\PF)=\ides(7563412) = \{2,4,6\}$, and $\pides(\PF)=\{2,2,2,1\}$.

We have the following two remarks about the statistics word, ides and pides. 
\begin{remark}\label{remark:1}
	Let $i<j$ be two cars in the parking function $\PF$. If $i$ is to the left of $j$ in $\sigma(\PF)$, then the cars $i,j$ must be in different columns.
\end{remark}
\begin{proof}
In $\sigma(\PF)$, the allocation of $i$ and $j$ implies that $\rank(i)>\rank(j)$. If $i$ and $j$ are in the same column, then it must be the case that $j$ lies on top of $i$, which leads to a contradiction with $\rank(i)>\rank(j)$. Thus, $i$ and $j$ must be in different columns.
\end{proof}

\begin{remark}\label{remark:2}
	For $\PF\in\PFc_{m,n}$, the parts in the composition set $\pides(\PF)$  are less than or equal to $m$.
\end{remark}
\begin{proof}
Suppose to the contrary. If $M\in\pides(\PF)$ where $M>m$, then there exist $M$ cars $k,k+1,\ldots,k+M-1$ with decreasing ranks. By Remark \ref{remark:1}, the $M$ cars are in different columns, which is impossible, thus the assumption that $M\in\pides(\PF)$ is not true.
\end{proof}

In many papers (e.g.\ \cite{GLWX,Em}), the statistic dinv of a parking function is defined by 3 components --- path dinv (pdinv), max dinv (maxdinv) and temporary dinv (tdinv). 
\begin{definition}[$\tdinv$] Let $\PF$ be any $(m, n)$-parking function, then 
	$$\tdinv(\PF): =\sum_{\mathrm{cars}\ i<j} \chi(\rank(i) < \rank(j) < \rank(i) + m).$$
\end{definition}
In \fref{2} (c), $\tdinv(\PF)=7$ since the pairs of cars contributing to tdinv are $(1,3)$, $(1,4)$, $(3,5)$, $(3,6)$, $(4,6)$, $(5,7)$ and $(6,7)$. Then, the statistic max dinv of a path is defined as the maximum of temporary dinvs of parking functions on the path.
\begin{definition}[$\maxdinv$] For any parking function $\PF$,
	$$\maxdinv(\PF) := \mathrm{max}\{\tdinv(\PF') : \Pi(\PF') =\Pi(\PF)\}.$$
\end{definition}
Finally, the statistic dinv is defined as follows.
\begin{definition}[$\dinv$]\label{definition:dinv1}
	For any parking function $\PF$,
	$$\dinv(\PF) := \tdinv(\PF)+\pdinv(\Pi(\PF))-\maxdinv(\PF).$$
\end{definition}
We shall apply this definition of dinv in several combinatorial proofs in Section 3. 

Notice that the statistics pdinv and maxdinv of a parking function $\pi$ are determined by the underlying Dyck path $\Pi(\PF)$. we also write $\pdinv(\PF)$ and $\maxdinv(\Pi(\PF))$ for path dinv of parking function $\PF$ and max dinv of its path. 

Further, the component $(\pdinv(\Pi(\PF))-\maxdinv(\PF))$ in the definition of $\dinv(\PF)$ combines to a statistic of rational Dyck paths.
Our definition of $\dinv(\PF)$ in Section 4 will follow the formulation by Leven and Hicks \cite{Hicks}, who gave a simplified formula for $\dinv(\PF)$ by defining the statistic \emph{dinv correction} (\emph{dinvcorr}) that satisfies $\dinvcorr(\Pi(\PF))=\pdinv(\Pi(\PF))-\maxdinv(\Pi(\PF))$. 

\begin{definition}[$\dinvcorr$] Let $P$ be any $(m, n)$-Dyck path and  set $\frac{0}{0} = 0$ and $\frac{x}{0} = \infty$ for all $x\neq 0$, then\ 
	
\noindent\resizebox{\textwidth}{!}{$
		\ \ \dinvcorr(P):=\sum_{c\in\lambda(P)}\chi\left( \frac{arm(c)+1}{leg(c)+1}\leq\frac{m}{n}<\frac{arm(c)}{leg(c)} \right)-\sum_{c\in\lambda(P)}\chi\left( \frac{arm(c)}{leg(c)}\leq\frac{m}{n}<\frac{arm(c)+1}{leg(c)+1} \right).
		$}
\end{definition}
An alternative definition of \emph{dinv} is 
\begin{definition}[$\dinv$, alt.]
	Let $\PF$ be any $(m, n)$-parking function, then 
	$$
	\dinv(\PF) := \tdinv(\PF)+\dinvcorr(\Pi(\PF)).
	$$
\end{definition}

Note that the statistic $\dinvcorr$ only depends on the path $P$, and it is the difference of two sums $\sum_{c\in\lambda(P)}\chi\left( \frac{arm(c)+1}{leg(c)+1}\leq\frac{m}{n}<\frac{arm(c)}{leg(c)} \right)$ and $\sum_{c\in\lambda(P)}\chi\left( \frac{arm(c)}{leg(c)}\leq\frac{m}{n}<\frac{arm(c)+1}{leg(c)+1} \right)$, of which at most one is nonzero. If $m=n$, then there is no $\dinvcorr$. If $m\neq n$, we count dinvcorr by all the cells in $\lambda(P)$. Given a cell $c\in\lambda(P)$, we highlight the vertical line segment $N$ which is a north step of the path $P$ to the east of $c$, and the horizontal line segment $E$ which is a east step of $P$ to the south of $c$. We draw two lines with slope $\frac{n}{m}$ from the north end and south end of $N$. 
\begin{enumerate}[(1)]
	\item If $n > m$, the cells of type (a) and (b) in \fref{dinvcorr} contribute $-1$ to $\dinvcorr$,
	\item If $m > n$, the cells of type (c) and (d) in \fref{dinvcorr} contribute $1$ to $\dinvcorr$.
\end{enumerate}

\begin{figure}[ht!]
	\centering
	\vspace{-1mm}	
	\begin{tikzpicture}[scale=.5]
	\draw (0,5) rectangle (1,6); 
	\node at (.5,5.5) {$c$};
	\draw[very thick] (0,1) circle[radius=0.05,fill]--(1,1) circle[radius=0.05,fill];
	\draw[very thick] (3,5) circle[radius=0.05,fill]--(3,6) circle[radius=0.05,fill];
	\draw (.5,0)--(3,5)--(3,6)--(0,0);
	\draw (1,-1) node {(a)};
	\fillll{.5}{2}{E};\fillll{4}{6}{N};
	\path (-2,0);
	\end{tikzpicture}
	\begin{tikzpicture}[scale=.5]
	\draw (0,5) rectangle (1,6); 
	\node at (.5,5.5) {$c$};
	\draw[very thick] (0,1) circle[radius=0.05,fill]--(1,1) circle[radius=0.05,fill];
	\draw[very thick] (3,5) circle[radius=0.05,fill]--(3,6) circle[radius=0.05,fill];
	\draw (.25,0)--(3,5)--(3,6)--(-.3,0);
	\draw (1,-1) node {(b)};
	\fillll{.5}{2}{E};\fillll{4}{6}{N};
	\path (-2,0);
	\end{tikzpicture}
	\begin{tikzpicture}[scale=.5]
	\draw (0,2) rectangle (1,3); 
	\node at (.5,2.5) {$c$};
	\draw[very thick] (0,1) circle[radius=0.05,fill]--(1,1) circle[radius=0.05,fill];
	\draw[very thick] (4,2) circle[radius=0.05,fill]--(4,3) circle[radius=0.05,fill];
	\draw (1,0.5)--(4,2)--(4,3)--(-1,.5);
	\path (-3,0);
	\fillll{1}{1}{E};\fillll{5}{3}{N};
	\draw (1.5,-1.5) node {(c)};
	\end{tikzpicture}
	\begin{tikzpicture}[scale=.5]
	\draw (0,2) rectangle (1,3); 
	\node at (.5,2.5) {$c$};
	\draw[very thick] (0,.75) circle[radius=0.05,fill]--(1,.75) circle[radius=0.05,fill];
	\draw[very thick] (4,2) circle[radius=0.05,fill]--(4,3) circle[radius=0.05,fill];
	\draw (1,0.5)--(4,2)--(4,3)--(-1,.5);
	\fillll{1}{.75}{E};\fillll{5}{3}{N};
	\draw (1.5,-1.5) node {(d)};
	\end{tikzpicture}
	\caption{Types of cells that contribute to $\dinvcorr$.}
	\label{fig:dinvcorr}
\end{figure}
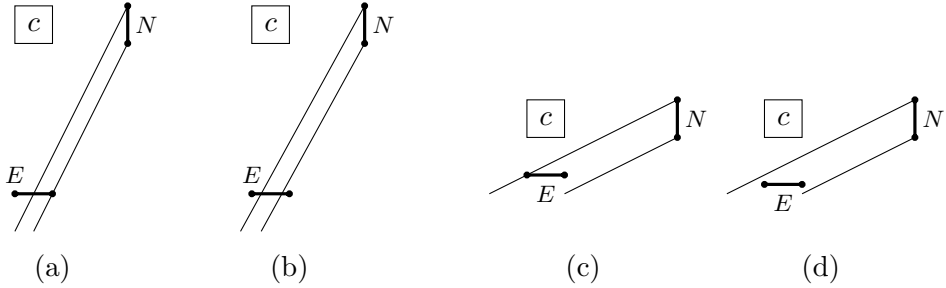

A  composition $\al$ of $n$ is a sequence of positive integers summing up to $n$, denoted by $\al\models n$.
Suppose $\alpha = (\alpha_1, \ldots, \alpha_k)$ is 
a composition of $n$ with $k$ parts. We associate 
a subset $S(\alpha)$ of $\{1, \ldots, n-1\}$ to $\alpha$ 
by setting 
$$S(\alpha) = \{\alpha_1, \alpha_1+\alpha_2, \ldots , \alpha_1 + \cdots + 
\alpha_{k-1}\}.$$
We let $F_{\al}[X]$ denote the fundamental quasi-symmetric function of Gessel \cite{gessel} associated to $\alpha$ where $X =\{x_1, \ldots, x_n\}$:
\begin{equation}
F_{\alpha}[X] := 
\sum_{\stackrel{1 \leq a_1 \leq a_2 \leq \cdots \leq a_n \leq n}{i \in 
		S(\alpha) \rightarrow a_i < a_{i+1}}} x_{a_1} x_{a_2} \cdots x_{a_n}.
\end{equation}
Then following Hikita \cite{Hikita}, the \textit{Hikita polynomial} 
$\Hikita_{m,n}[X; q, t]$ where $m$ and $n$ are coprime is defined by 
\begin{equation}
\Hikita_{m,n}[X; q, t]:=\sum_{\PF\in\PFc_{m,n}} t^{\area(\PF)}q^{\dinv(\PF)}F_{\pides(\PF)}[X].
\end{equation}
Due to the work of Haglund, Haiman, Loehr, Remmel and Ulyanov 
\cite{HHLRU},
the refinement of Hikita polynomial on each Dyck path is symmetric over the variables in $\{x_1,\ldots,x_n\}$, thus Hikita polynomials are symmetric functions.

Hikita did not define non-coprime Hikita polynomials, thus we generalize Hikita polynomials to non-coprime case as follows. Given $m,n$ coprime and $k \geq 1$, we defined the return, $\ret(\PF)$, of 
a $(km,kn)$-parking function $\PF$ to be the \textit{smallest} positive integer $i$ such that the supporting path of $\PF$ goes through the point $(im, in)$. 
Then following the formulation of Bergeron, Garsia, Leven and Xin \cite{BGLX}, the \emph{extended Hikita polynomial} is defined to be
\begin{equation}
\Hikita_{km,kn}[X; q, t]:=\sum_{\PF\in\PFc_{km,kn}} [ret(\PF)]_{\frac{1}{t}}t^{\area(\PF)}q^{\dinv(\PF)}F_{\pides(\PF)}[X].
\end{equation}
For the same reason, extended Hikita polynomials are also symmetric functions in $X$.

\subsection{Algebraic side}

For any partition $\mu$ of $n$, let $\widetilde{H}_{\mu}$ be the modified Macdonald symmetric function \cite{Macbook} associated to $\mu$, and let $\nabla$ be the linear operator defined in terms of the
modified Macdonald symmetric functions $\widetilde{H}_{\mu }(X;q,t)$ by
\begin{equation}\label{e:nabla}
\nabla \widetilde{H}_{\mu } := t^{n(\mu )}q^{n(\mu ')} \widetilde{H}_{\mu },
\end{equation}
where $\mu '$ is the conjugate of $\mu$, and
$n(\mu ) = \sum _{i}(i-1)\mu _{i}$. 

The Shuffle Conjecture proposed by Haglund, Haiman, Loehr, Remmel and Ulyanov 
\cite{HHLRU} which was proved by Carlsson and Mellit \cite{CM} as the Shuffle Theorem can be stated as follows.
\begin{theorem}[Carlsson-Mellit]For all $n\geq 0$, 
	\begin{equation}
	\nabla e_n=\Hikita_{n+1,n}[X; q, t].
	\end{equation}
\end{theorem}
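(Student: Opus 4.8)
Given what has already been set up, the shortest route is to read this theorem as the $(m,n)=(n+1,n)$ specialization of the Extended Rational Shuffle Theorem. The operators $\Qmn{m,n}$ were introduced precisely so that the $(n+1,n)$ case recovers the classical nabla operator, i.e. $\Qmn{n+1,n}(1)=\nabla e_n$; combining this with \eqref{rationalshuffle} gives $\nabla e_n=\Qmn{n+1,n}(1)=\Hikita_{n+1,n}[X;q,t]$. What needs a (routine) check is the combinatorial bookkeeping: because $\gcd(n+1,n)=1$, every $(n+1,n)$-parking function has $\ret\equiv 1$, so $[\ret(\PF)]_{1/t}=1$ and $\Hikita_{n+1,n}$ is literally the coprime Hikita polynomial $\sum_{\PF\in\PFc_{n+1,n}}t^{\area(\PF)}q^{\dinv(\PF)}F_{\pides(\PF)}[X]$, with no $\ret$-factor intervening.

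For a self-contained argument I would reconstruct the Carlsson--Mellit proof, which in fact establishes the stronger \emph{Compositional Shuffle Conjecture} of Haglund--Morse--Zabrocki: for every composition $\alpha=(\alpha_1,\dots,\alpha_\ell)$ of $n$, with $C_a$ the Zabrocki operators,
\[
\nabla\, C_{\alpha_1}C_{\alpha_2}\cdots C_{\alpha_\ell}\cdot 1 \;=\; \sum_{\substack{\PF\in\PFc_n\\ \mathrm{touch}(\PF)=\alpha}} t^{\area(\PF)}q^{\dinv(\PF)}F_{\pides(\PF)}[X],
\]
where $\mathrm{touch}(\PF)$ records the successive returns of the supporting Dyck path to the diagonal. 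Summing over $\alpha$ and using $\sum_{\alpha}C_{\alpha_1}\cdots C_{\alpha_\ell}\cdot 1=e_n$ then yields $\nabla e_n=\Hikita_{n+1,n}[X;q,t]$. The engine is the Dyck path algebra $\mathbb{A}_{q,t}$ of raising and lowering operators $d_+,d_-$ and Hecke operators $T_i$ acting on symmetric functions in auxiliary alphabets: one shows that the algebraic side (via the eigen-action of $\nabla$ on the modified Macdonald basis $\widetilde H_\mu$) and the combinatorial side (an LLT generating function over Dyck paths with prescribed boundary data) satisfy the \emph{same} recursion obtained by peeling off the last lattice step, and then inducts on the path length, the base case being trivial.

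A third option, purely combinatorial once the classical Shuffle Theorem is granted in its parking-function form $\nabla e_n=\sum_{\PF\in\PFc_n}t^{\area(\PF)}q^{\dinv(\PF)}F_{\pides(\PF)}[X]$ (stated above), is to reduce the theorem to the bijective identity $\Hikita_{n+1,n}[X;q,t]=\sum_{\PF\in\PFc_n}t^{\area(\PF)}q^{\dinv(\PF)}F_{\pides(\PF)}[X]$. Since $|\Dyck_{n+1,n}|=\tfrac{1}{2n+1}\binom{2n+1}{n}$ equals the $n$-th Catalan number $|\Dyck_{n,n}|$, one builds a bijection $\PFc_{n+1,n}\to\PFc_n$ (under which the column structure of the path, hence the set of admissible car labelings, is respected) and checks that it preserves the triple $(\area,\dinv,\pides)$. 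The present paper's \tref{1} already carries this out directly for $n\le 3$.

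The main obstacle depends on the route chosen. Routes one and two are not cheap: they rely, respectively, on Mellit's Extended Rational Shuffle Theorem and on the full operator-identity and LLT-recursion apparatus of Carlsson--Mellit, and there is no shortcut around either. For the elementary route three the genuine difficulty is showing the bijection preserves $\dinv$: $\area$ and $\pides$ transfer essentially formally ($\pides$ because it is read off the car ranks, hence off the diagonal word), whereas $\dinv$ is assembled quite differently in the two settings --- as $\tdinv+\dinvcorr$ with ranks $\rank(x,y)=my-nx$ on the $(n+1,n)$ side versus the classical staircase-based statistic on the $(n,n)$ side --- so one needs a careful cell-by-cell analysis verifying that $\dinvcorr$ exactly absorbs the discrepancy between $\tdinv$ and classical $\dinv$.
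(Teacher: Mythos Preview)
The paper does not contain a proof of this theorem: it is stated as a background result attributed to Carlsson and Mellit \cite{CM}, with no argument given. There is therefore no ``paper's own proof'' to compare against. In the paper's logical structure this theorem, like Mellit's Extended Rational Shuffle Theorem that follows it, functions as an imported black box.

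Your first two routes are accurate sketches of how the result is in fact obtained in the literature: route~1 (specialize Mellit's \tref{GLWX} to $(m,n)=(n+1,n)$ and use $\Qmn{n+1,n}(1)=\nabla e_n$) is the fastest deduction available \emph{a posteriori}, and route~2 correctly summarizes the Carlsson--Mellit strategy via the Compositional Shuffle Conjecture and the Dyck path algebra. Either would be acceptable as an outline, though of course neither is ``self-contained'' in any reasonable sense.

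Your third route, however, is circular as a proof of the theorem. You propose to grant the identity $\nabla e_n=\sum_{\PF\in\PFc_n}t^{\area(\PF)}q^{\dinv(\PF)}F_{\pides(\PF)}[X]$ and then bijectively identify the right-hand side with $\Hikita_{n+1,n}[X;q,t]$. But the first identity \emph{is} the Shuffle Theorem --- the paper states it in exactly that form in the introduction, and the restatement in Section~2.2 as $\nabla e_n=\Hikita_{n+1,n}[X;q,t]$ is just a repackaging, since $(n+1,n)$-Dyck paths are canonically the same as $(n,n)$-Dyck paths (append a final east step) and the statistics agree by construction. So route~3 reduces to checking a definition, not to proving the theorem; the hard content (Carlsson--Mellit) has been assumed.
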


Gorsky and Negut\cite{GN} introduced operators $\Qmn{m,n}$ on symmetric 
functions in the case where $m$ and $n$ are coprime. 
The $\Qmn{m,n}$ operators of the Gorsky-Negut  can be defined in terms of the operators $D_k$ which were introduced by Bergeron and Garsia \cite{GB}. 
In the plethystic notation, the action of $D_k$ on a symmetric function $F[X]$ is defined by
\begin{equation}
D_k\ F[X]=F\left[X+\frac{M}{z}\right]\sum_{i\geq 0}  (-z)^i e_i[X]\bigg\vert_{z^k},
\end{equation}
where $M = (1-t)(1-q)$.	

Then one can construct a family of symmetric function operators $\Qmn{m,n}$ for any pair of coprime positive integers $(m,n)$ as follows. First for any $n\geq 0$, set $\Qmn{1,n} = (-1)^n D_n.$ Next, one can recursively define $\Qmn{m,n}$ for $m > 1$ as follows. Consider the $m\times n$ lattice with diagonal $y =\frac{n}{m}x$. We choose $(a, b)$ such that $(a, b)$ is the lattice point which is closest to the diagonal, and 
\begin{equation*}
\begin{vmatrix} c & d \\ a & b \end{vmatrix} >0
\end{equation*}
where $(c, d) = (m -a, n -b)$. In such a case, we will write 
\begin{equation}
\mathrm{Split}(m, n) = (a, b) + (c, d).
\end{equation}

Note that the pairs $(a, b)$ and $(c,d)$ are coprime since any point of the form $(kx, ky)$ is further from the diagonal than the point $(x, y)$. Then we have the following recursive definition of the $\Qmn{m,n}$ operators:
\begin{equation}
\Qmn{m,n} = \frac{1}{M}[\Qmn{c,d}, \Qmn{a,b}] = \frac{1}{M}(\Qmn{c,d} \Qmn{a,b}-\Qmn{a,b}\Qmn{c,d}).
\end{equation}

\fref{3} gives an example of $\mathrm{Split}(3,5)$. $\mathrm{Split}(3,5) = (1,2) + (2,3)$, so that
\begin{equation}
\Qmn{3,5} = \frac{1}{M}[\Qmn{2,3}, \Qmn{1,2}]=\frac{1}{M}[\Qmn{2,3},D_2].
\end{equation}
\begin{figure}[ht!]
	\centering	
	\begin{tikzpicture}[scale=.8]
	\draw[help lines] (0,0) grid (3,5);
	\draw[->,green!50!black,very thick] (0,0) -- (3,5); \node[green!50!black] at (2,2.5) {\footnotesize $\Qmn{3,5}$};
	\draw[->,blue,thick] (0,0) -- (1,2); \node[blue] at (.4,1.8) {\footnotesize $\Qmn{1,2}$};
	\draw[->,blue,thick] (1,2) -- (3,5); \node[blue] at (1.5,3.5) {\footnotesize $\Qmn{2,3}$};
	\draw[->,orange!80!red,thick] (2,4) -- (3,5); \node[orange!80!red] at (2.2,4.7) {\footnotesize $\Qmn{1,1}$};
	\draw[->,orange!80!red,thick] (1,2) -- (2,4); \node[orange!80!red] at (1,3) {\footnotesize $\Qmn{1,2}$};
	\end{tikzpicture}
	\vspace{-2mm}
	\caption{The geometry of $\mathrm{Split}(3,5)$.}
	\label{fig:3}
\end{figure}

The same procedure gives $\Qmn{2,3} = \frac{1}{M}[\Qmn{1,1},\Qmn{1,2}]=\frac{1}{M}[-D_1, D_2]$. Therefore,
\begin{equation}
\Qmn{3,5} = \frac{1}{M^2}[[-D_1, D_2],D_2] = \frac{1}{M^2} (-D_2D_2D_1 + 2D_2D_1D_2 -D_1D_2D_2) .
\end{equation}

For the non-coprime case, we can define the $\Qmn{km,kn}$ operator as follows. We choose one of the lattice points, $(a,b)$, in the $km\times kn$ lattice satisfying $b(km-a)-a(km-b)>0$ that are not on the diagonal and closest to the diagonal,
then we set
\begin{equation}
\Qmn{km,kn} = \frac{1}{M}[\Qmn{km-a,kn-b}, \Qmn{a,b}].
\end{equation}
This recursive definition is well-defined as it is proved in \cite{BGLX} that any choice of such point $(a,b)$ defines the same operation.

Gorsky and Negut \cite{GN} have the following lemma about the operators  $\nabla$ and $\Qmn{m,n}$ for the coprime case.
\begin{lemma}[Gorsky-Negut]\label{lemma:1}
	For any coprime positive integers $m,n$,
	\begin{equation}
	\nabla \Qmn{m,n}\nabla^{-1}=\Qmn{m+n,n}.
	\end{equation}
\end{lemma}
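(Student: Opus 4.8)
The plan is to induct on $m$, unwinding the operators $\Qmn{m,n}$ through the recursive $\mathrm{Split}$ construction. Two preliminary observations make the recursion run. First, $\nabla$ is invertible on the ring of symmetric functions over $\mathbb{Q}(q,t)$: from $\nabla\widetilde{H}_{\mu} = t^{n(\mu)}q^{n(\mu')}\widetilde{H}_{\mu}$ the inverse acts by reciprocal eigenvalues, so conjugation $X\mapsto\nabla X\nabla^{-1}$ is an algebra automorphism; in particular it fixes the scalar $M=(1-t)(1-q)$ and satisfies $\nabla\bigl(\tfrac1M[A,B]\bigr)\nabla^{-1}=\tfrac1M[\nabla A\nabla^{-1},\nabla B\nabla^{-1}]$. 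Second, one should prove the mildly extended statement allowing $n\ge 0$, so that the operator $\Qmn{1,0}=D_0$ (which the recursion reaches, e.g.\ through $\mathrm{Split}(2,1)=(1,1)+(1,0)$) is covered; the base case $m=1$ then asserts $(-1)^n\nabla D_n\nabla^{-1}=\Qmn{n+1,n}$ for all $n\ge 0$, the case $n=0$ saying that $D_0$ commutes with $\nabla$.

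The combinatorial heart of the inductive step is the \emph{shear invariance of $\mathrm{Split}$}: if $\mathrm{Split}(m,n)=(a,b)+(c,d)$ then $\mathrm{Split}(m+n,n)=(a+b,b)+(c+d,d)$. Here $(a,b)$ is the lattice point of the $m\times n$ grid lying off the diagonal and closest to it, with $(c,d):=(m-a,n-b)$ and $\begin{vmatrix}c&d\\a&b\end{vmatrix}>0$; since $\gcd(m,n)=1$ the closeness condition says $|na-mb|=1$, and the positive determinant, which equals $mb-an$, pins down $na-mb=-1$. The shear $(x,y)\mapsto(x+y,y)$ has determinant $1$, carries the line $y=\tfrac nm x$ to $y=\tfrac n{m+n}x$, sends $(m,n)\mapsto(m+n,n)$ and $(a,b)\mapsto(a+b,b)$, and bijects the off-diagonal lattice points of the two grids; it preserves both $n(a+b)-(m+n)b=na-mb=-1$ and $(c+d)b-(a+b)d=cb-ad>0$, so $(a+b,b)$ is exactly the point chosen by $\mathrm{Split}(m+n,n)$. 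Granting this, for $m\ge2$ one checks (as in the coprimality remark of the excerpt) that $1\le a,c\le m-1$, so the induction hypothesis applies to $(a,b)$ and $(c,d)$ and
\[
\nabla\,\Qmn{m,n}\,\nabla^{-1}
=\tfrac1M\bigl[\nabla\Qmn{c,d}\nabla^{-1},\ \nabla\Qmn{a,b}\nabla^{-1}\bigr]
=\tfrac1M\bigl[\Qmn{c+d,d},\ \Qmn{a+b,b}\bigr]
=\Qmn{m+n,n}.
\]

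The remaining, and I expect hardest, ingredient is the base case $m=1$, namely $(-1)^n\nabla D_n\nabla^{-1}=\Qmn{n+1,n}$; this is the only step that touches the actual definition of $\nabla$ via Macdonald polynomials rather than formal bracket algebra. I would establish it either (i) by unfolding the $\mathrm{Split}$ recursion for $\Qmn{n+1,n}$ into an explicit signed word in $D_0,D_1,\dots,D_n$ and matching it against $\nabla D_n\nabla^{-1}$, computed from the plethystic formula for $D_n$ together with the eigenvalue action of $\nabla$ on $\{\widetilde{H}_{\mu}\}$ --- essentially the computation of Gorsky--Negut \cite{GN} --- or (ii) by invoking the identification of the $\Qmn{m,n}$ with generators of the elliptic Hall algebra, under which conjugation by $\nabla$ realizes the $SL_2(\mathbb{Z})$-automorphism attached to $\left(\begin{smallmatrix}1&1\\0&1\end{smallmatrix}\right)$, whose action on index vectors is $(m,n)\mapsto(m+n,n)$.
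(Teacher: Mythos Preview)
The paper does not prove this lemma; it is quoted from Gorsky--Negut \cite{GN} and used as a black box (the paper only proves the non-coprime extension, \lref{newlemma}, by assuming \lref{1}). So there is no in-paper argument to compare your proposal against.

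Your inductive reduction is sound. Conjugation by $\nabla$ is an algebra automorphism, and the shear-invariance of $\mathrm{Split}$ is correct---with one cosmetic fix: the shear $(x,y)\mapsto(x+y,y)$ does \emph{not} biject all off-diagonal lattice points of the two grids, since the image of $\{0,\dots,m\}\times\{0,\dots,n\}$ is only the strip $\{(a',b'):b'\le a'\le m+b'\}$ inside the $(m+n)\times n$ grid. But any $(a',b')$ with $na'-(m+n)b'=-1$ and $0\le b'\le n$ is forced to lie in that strip (from $na'=(m+n)b'-1$ one gets $b'\ge1$ and then $nb'\le na'<n(m+b')$), so the closest point in the larger grid is always hit and your conclusion stands.

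The genuine gap is the base case. The identity $(-1)^n\nabla D_n\nabla^{-1}=\Qmn{n+1,n}$ is not a formality: it is where the Macdonald eigenoperator $\nabla$ actually meets the $D_k$'s, and you do not prove it. Your route (ii)---the $SL_2(\mathbb{Z})$ action on the elliptic Hall algebra, under which conjugation by $\nabla$ realizes the shear $\bigl(\begin{smallmatrix}1&1\\0&1\end{smallmatrix}\bigr)$---is precisely the Gorsky--Negut argument, and it yields $\nabla\Qmn{m,n}\nabla^{-1}=\Qmn{m+n,n}$ for \emph{all} coprime $(m,n)$ in one stroke, so invoking it for the base case makes your induction superfluous. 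Route (i) is not really a plan: unfolding $\Qmn{n+1,n}$ via $\mathrm{Split}(k+1,k)=(1,1)+(k,k-1)$ gives an $n$-fold nested commutator in $D_0$ and $D_1$, but matching this against $\nabla D_n\nabla^{-1}$ still requires the Macdonald-operator identities (e.g.\ that $D_0$ is diagonal on the $\widetilde{H}_\mu$, and Pieri-type relations for $D_1$) that the elliptic Hall machinery packages. In short, your reduction is clean, but the residue you leave is essentially the whole lemma.
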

We can generalize \lref{1} to the case $(kn,n)$.
\begin{lemma}\label{lemma:newlemma}
	For any positive integers $k,n$,
	\begin{equation}
	\nabla \Qmn{kn,n}\nabla^{-1}=\Qmn{(k+1)n,n}.
	\end{equation}
\end{lemma}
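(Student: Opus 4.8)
The plan is to reduce everything to the coprime statement \lref{1} by peeling off a single coprime operator from $\Qmn{kn,n}$ via the Split recursion, conjugating by $\nabla$, and then recognising the result as the analogous Split of $\Qmn{(k+1)n,n}$. Since the case $n=1$ is literally \lref{1}, I will assume $n\geq 2$ throughout.

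First I would choose a convenient Split of $\Qmn{kn,n}$. In the $kn\times n$ lattice the main diagonal is the line $x=ky$, and the lattice point $(k-1,1)$ lies strictly above it at the minimal possible distance while satisfying the determinant condition of the recursive definition (the relevant $2\times 2$ determinant equals $n>0$ there). Since the recursion is independent of which closest lattice point one picks, this gives
$$\Qmn{kn,n}=\frac{1}{M}\left[\Qmn{k(n-1)+1,\,n-1},\ \Qmn{k-1,1}\right],$$
and both index vectors are coprime, since $k(n-1)+1\equiv 1\pmod{n-1}$. Next I would conjugate: $\nabla(\cdot)\nabla^{-1}$ is an algebra automorphism, so it commutes with the scalar $1/M$ and with the commutator, and \lref{1} applies to each of the two coprime operators, yielding
$$\nabla\,\Qmn{kn,n}\,\nabla^{-1}=\frac{1}{M}\left[\Qmn{(k+1)(n-1)+1,\,n-1},\ \Qmn{k,1}\right].$$
To finish, I would check that the right-hand side is exactly a Split of $\Qmn{(k+1)n,n}$: in the $(k+1)n\times n$ lattice the point $(k,1)$ is a closest lattice point strictly above the diagonal $x=(k+1)y$ with determinant quantity $n>0$, and its complementary vector is $\bigl((k+1)n-k,\,n-1\bigr)=\bigl((k+1)(n-1)+1,\,n-1\bigr)$, so the displayed bracket equals $\Qmn{(k+1)n,n}$. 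Conceptually, the shear $(a,b)\mapsto(a+b,b)$ carries the chosen Split of $(kn,n)$ to the chosen Split of $((k+1)n,n)$, and \lref{1} says precisely that $\nabla$-conjugation realises this shear on the generators $\Qmn{a,b}$.

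The step I expect to be the main annoyance is the degenerate small cases, not the main argument. When $k=1$ the first operator above is $\Qmn{0,1}$, which is not covered by \lref{1} as literally stated; one either invokes the (routine) extension $\nabla\Qmn{0,1}\nabla^{-1}=\Qmn{1,1}$, or picks a different closest lattice point for $\Qmn{n,n}$ — for instance $(1,2)$ when $n\geq 3$, which peels off the genuinely coprime pair $(n-1,n-2)$ and $(1,2)$ — and checks the smallest case $\nabla\Qmn{2,2}\nabla^{-1}=\Qmn{4,2}$ by hand. Alternatively, one can organise the whole argument as an induction on $k$, with the Split-and-conjugate step as the inductive step and $k=1$ treated separately.
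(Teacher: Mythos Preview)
Your argument is correct and follows the same overall strategy as the paper --- split $\Qmn{kn,n}$ into a commutator of two coprime pieces, apply \lref{1} to each factor, and recognise the result as a split of $\Qmn{(k+1)n,n}$. The difference is purely in the choice of split point. The paper peels off $\Qmn{1,0}$ rather than $\Qmn{k-1,1}$, writing
\[
\Qmn{kn,n}=\frac{1}{M}\bigl[\Qmn{kn-1,n},\,\Qmn{1,0}\bigr],
\]
then applies \lref{1} to the coprime pair $(kn-1,n)$ and uses that $\Qmn{1,0}=D_0$ is unchanged under $\nabla$-conjugation (both $\nabla$ and $D_0$ are diagonal in the modified Macdonald basis, hence commute). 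This recombines directly as $\frac{1}{M}[\Qmn{(k+1)n-1,n},\Qmn{1,0}]=\Qmn{(k+1)n,n}$.

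The advantage of the paper's split is that it is uniform in $k$: the pair $(kn-1,n)$ consists of positive coprime integers for every $k\geq 1$, so the $k=1$ annoyance you correctly flag never appears. Your split into $(k(n-1)+1,n-1)$ and $(k-1,1)$ is perfectly valid for $k\geq 2$, and your suggested workarounds for $k=1$ (alternate split point $(1,2)$ for $n\geq 3$, direct check for $n=2$) do close the gap; but peeling off $\Qmn{1,0}$ instead avoids the case distinction altogether at the cost only of the trivial observation that $\nabla$ and $D_0$ commute.
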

\begin{proof}
	By definition of the operator $\Qmn{m,n}$, we have
	\begin{equation*}
	\Qmn{kn,n}=\frac{1}{M}[\Qmn{kn-1,n},\Qmn{1,0}]
	=\frac{1}{M}(\Qmn{kn-1,n}\Qmn{1,0} -\Qmn{1,0}\Qmn{kn-1,n}).
	\end{equation*}
	Using \lref{1}, we have
	\begin{align*}
		\nabla \Qmn{kn,n}\nabla^{-1}
		=&\frac{1}{M}(\nabla\Qmn{kn-1,n}\nabla^{-1}\nabla\Qmn{1,0}\nabla^{-1} -\nabla\Qmn{1,0}\nabla^{-1}\nabla\Qmn{kn-1,n}\nabla^{-1})\\
		=&\frac{1}{M}(\Qmn{kn+n-1,n}\Qmn{1,0} -\Qmn{1,0}\Qmn{kn+n-1,n})\\
		=&\Qmn{(k+1)n,n}.\hspace*{8cm}\qedhere
	\end{align*}
\end{proof}

\bigskip

Now we are ready to prove \tref{2k2}.

\begin{proof}[Proof of \tref{2k2}]
One can verify the base case for both equations: 
\begin{equation}
\Qmn{2,2}(1)=s_2+(\sqt{1}+1)s_{11}.
\end{equation}
Note that by \lref{newlemma}, $\Qmn{2k,2} = \nabla^{k-1}\Qmn{2,2}\nabla^{-k+1}$, so we have
\begin{equation}
\Qmn{2k,2}(1)=\nabla^{k-1}\Qmn{2,2}\nabla^{-k+1}(1) =\nabla^{k-1}(s_2+(\sqt{1}+1)s_{11}).
\end{equation}
It can be proved by induction that 
\begin{align}
\nabla^{n}s_2 &=  -qt \qtn{n-1} s_2 - qt \qtn{n} s_{11},\\
\nabla^{n}s_{11}&=  \qtn{n} s_2 + qt \qtn{n+1} s_{11},
\end{align}
thus,
\begin{equation}
\Qmn{2k,2}(1)=\nabla^{k-1} s_2+(q+t+1)\nabla^{k-1} s_{11} = (\qtn{k}+\qtn{k-1})s_2 + (\qtn{k+1}+\qtn{k})s_{11},
\end{equation}
which proves Equation (\ref{eq1}).

For Equation (\ref{eq2}), we use the result of Leven (Equation (23) of \cite{Em}) that
\begin{equation}
D_a D_b (1) = (-1)^{a+b} e_a e_b +(-1)^{a+b-1} M \sum_{i=1}^{b} \qtn{i}  e_{a+i} e_{b-i}.
\end{equation}
Expanding the operator $\Qmn{2,2k}$ gives
\begin{align}
\Qmn{2,2k}(1)& = \frac{1}{M} (D_{k+1}D_{k-1}(1)- D_{k-1}D_{k+1}(1))\nonumber\\
& = \sum_{i=1}^{k+1} \qtn{i}  e_{k+1-i} e_{k-1+i} 
-\sum_{i=1}^{k-1} \qtn{i}  e_{k-1-i} e_{k+1+i} \nonumber\\
& = \sum_{r=0}^k (\qtn{k-r+1}+\qtn{k-r})s_{2^r1^{2k-2r+1}},
\end{align}
which proves Equation (\ref{eq2}).
\end{proof}

We shall use \lref{1} and \lref{newlemma} to prove
\tref{1} algebraically in Section 4.1.

\subsection{The Extended Rational Shuffle Theorem}
For the rational case, we consider pairs of positive integers $(km,kn)$, where $m$ and $n$ are coprime and $k$ is a positive integer.
The \emph{Extended Rational Shuffle Conjecture} of Bergeron-Garsia-Leven-Xin \cite{BGLX}, which generalizes a previous conjecture by Gorsky and Negut \cite{GN}, has been shown to hold by Mellit \cite{Mellit}. So we have the \emph{Extended Rational Shuffle Theorem} as follows.
\begin{theorem}[Mellit]\label{theorem:GLWX}
	For all pairs of coprime positive integers $(m,n)$ and all $k\in\mathbb{Z}^+$, we have
	\begin{equation}
	\Qmn{km,kn}(1)=\Hikita_{km,kn}[X; q, t].
	\end{equation}
\end{theorem}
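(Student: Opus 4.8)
This statement---proved by Mellit \cite{Mellit}, building on Carlsson-Mellit \cite{CM}---lies far outside the elementary toolkit used elsewhere in this paper, so the plan is to deduce it from a finer \emph{compositional} identity and to induct along the very recursion that \emph{defines} the operators $\Qmn{m,n}$. First I would reduce to the coprime case: granting $\Qmn{m,n}(1)=\Hikita_{m,n}[X;q,t]$ for coprime $(m,n)$, the non-coprime case $(km,kn)$ follows from the recursion $\Qmn{km,kn}=\tfrac1M[\Qmn{km-a,kn-b},\Qmn{a,b}]$ of \cite{BGLX} by stratifying $(km,kn)$-parking functions according to the value of $\ret(\PF)$: a path first returning to the diagonal at $(im,in)$ decomposes into an $(im,in)$-part sitting atop a $(km-im,kn-in)$-part, and one checks that $[\ret(\PF)]_{\frac1t}\,t^{\area(\PF)}q^{\dinv(\PF)}F_{\pides(\PF)}[X]$ factors through this gluing so as to reproduce the commutator on the right-hand side, with the $[\ret]_{\frac1t}$ telescoping exactly as the $\tfrac1M$ prefactors do. The coprimality-free conjugation identities \lref{1} and \lref{newlemma}, together with the $SL_2(\mathbb Z)$-symmetry of the underlying algebra, then reduce the coprime case to a bounded set of representatives, e.g.\ those with $m<n$.

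\textbf{Algebraic side.} I would realize the $\Qmn{m,n}$ as images, under the standard polynomial (Fock-space) representation on symmetric functions, of the generators $P_{m,n}$ of the elliptic Hall algebra $\mathcal E$ of Burban-Schiffmann; the Split/commutator recursion for $\Qmn{m,n}$ is then merely a restriction of the relations of $\mathcal E$, and $\Qmn{m,n}(1)$ becomes a prescribed word in the operators $D_k$ applied to the constant $1$, with base case $\Qmn{1,n}(1)=(-1)^nD_n(1)=e_n$.

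\textbf{Combinatorial side.} I would introduce a refined Hikita polynomial recording the bottom portion of the parking function by an extra parameter (a composition $\alpha$, or a ``partial return'' datum), summing over the parameter to recover $\Hikita_{m,n}[X;q,t]$, and build on these refined objects the Carlsson-Mellit ``Dyck path algebra'' action: row-adding and row-removing operators $d_+,d_-$ together with Hecke-type operators $T_i$ implementing the $F_{\pides}$ bookkeeping (in the rational setting, an action of the affine Hecke algebra). The heart of this step is to verify that $d_+,d_-,T_i$ genuinely act on parking functions carrying the stated statistics and satisfy the defining relations of $\mathbb A_q$, which forces one to control precisely how $\area$, $\dinv$ (through $\tdinv$ and $\dinvcorr$), and $\pides$ change under adding or removing a boundary row and under the $T_i$'s; once this is in place, $\Hikita_{m,n}$ is exhibited as the image of $1$ under exactly the word in $d_\pm, T_i$ dictated by the continued-fraction expansion of $n/m$---the same data that governs the Split decomposition.

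\textbf{Matching, and the main obstacle.} Finally I would identify the $\mathcal E$-action on symmetric functions with the $\mathbb A_q$-action on the combinatorial side by matching generators and relations; both sides then equal the same word applied to $1$, hence they coincide, and the $\ret$-stratification promotes this to all $(km,kn)$. The hard part will be the combinatorial operator calculus of the third step: constructing $d_+,d_-,T_i$ and proving they obey the $\mathbb A_q$ relations with the correct action on the $\area/\dinv/\pides$ statistics, while reconciling the plethystic shift in $D_k$ with the $\tfrac1M$ normalization. This is exactly the technical core of Carlsson-Mellit and of Mellit's rational extension (where the geometry of torus-link HOMFLY homology supplies the guiding picture), and there is no shortcut around it---it is what makes the theorem deep rather than a formal consequence of the algebra relations.
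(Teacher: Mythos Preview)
The paper does not supply a proof of this theorem: it is quoted as a result of Mellit \cite{Mellit} (building on Carlsson--Mellit \cite{CM}) and used as a black box throughout. Your outline is a reasonable high-level summary of the Carlsson--Mellit/Mellit strategy---realizing $\Qmn{m,n}$ via the elliptic Hall algebra, building a Dyck path algebra $\mathbb{A}_q$ acting on parking functions with operators $d_\pm, T_i$, and matching the two representations---and you correctly flag the verification of the $\mathbb{A}_q$ relations on the combinatorial statistics as the technically deep step with no shortcut.

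One caution on the content of your sketch: the proposed reduction from the non-coprime case to the coprime case via a $\ret$-stratification and the commutator recursion is more delicate than you indicate. The claim that $[\ret(\PF)]_{1/t}$ ``telescopes exactly as the $\tfrac{1}{M}$ prefactors do'' under the gluing of a $(km,kn)$-path into an $(im,in)$-piece and a $((k-i)m,(k-i)n)$-piece is not obvious and would itself require a nontrivial compositional refinement (in the spirit of \cite{BGLX}); Mellit's actual argument handles the general $(km,kn)$ case through the same $\mathbb{A}_q$ machinery rather than by a separate elementary reduction to the coprime case. Since the paper itself offers no proof to compare against, there is nothing further to match.
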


The original \emph{Rational Shuffle Theorem} proposed by Gorsky and Negut \cite{GN} and proved by  Mellit \cite{Mellit} in the case where $m$ and 
$n$ are relatively prime is the special case when $k=1$ in \tref{GLWX}.
Further details and the complete picture about the Rational Shuffle Theorem is outlined in \cite{B2}.
Notice that $\nabla e_n=\Qmn{n+1,n}(1)$, and $\nabla e_n$ is not the same as $\Qmn{n,n}(1)$. 


The main goal of this paper is to study the combinatorics of 
the Schur function expansion of $\Qmn{m,n} (1)$.
Given that Mellit has proved the Rational Shuffle Theorem, we can 
find the Schur function expansion in one of two ways. That is, we can 
use the properties of $\Qmn{m,n}$ to find the Schur function expansion 
of $\Qmn{m,n} (1)$ which we will refer to as working on the {\em symmetric 
	function side} of the Rational Shuffle Theorem. Second, one could 
start with the Hikita polynomial $\Hikita_{m,n}[X;q,t]$ and expand 
that polynomial into Schur functions which we will call working 
on the {\em combinatorial side} of the Rational Shuffle Theorem. 

Since it is proved that $\Qmn{m,n}(1)=\Hikita_{m,n}[X; q, t]$, we let $[s_\lambda]_{m,n}$ denote the coefficient of Schur function $s_{\lambda}$ in both polynomials $\Qmn{m,n}(1)$ and $\Hikita_{m,n}[X; q, t]$. 

\subsection{An alternative expression for the combinatorial side}
We shall introduce an alternative expression for Hikita polynomials due to the fact that the Hikita polynomials $\Hikita_{m,n}[X;q,t]$ are symmetric in $\{x_1, \ldots, x_n\}$.

Suppose that  
$\al = (\al_1, \ldots, \al_k)$ is a  composition of 
$n$ into $k$ parts ($k\leq n$), then we set $\al_j=0$ for $j>k$. We let $X =\{x_1, \ldots, x_n\}$ be the set of $n$ variables and 
\begin{equation*}
\Delta_{\al}[X] :=  \det||x_i^{\al_j+n-j}|| =  \sum_{\sg  \in S_n} \mathrm{sgn}(\sg) \sg(x_1^{\al_1+n-1} \cdots 
x_n^{\al_n+n-n}).
\end{equation*}
We let $\displaystyle \Delta[X] := \det||x_i^{n-j}||$  
be the Vandermonde determinant. The Schur symmetric function 
$s_{\al}[X]$ 
associated to $\al$ can be defined by
\begin{math}\displaystyle
s_{\al}[X] := \frac{\Delta_{\al}[X]}{\Delta[X]}.
\end{math}

It is well-known that for any such  composition $\al$,  
either we have $s_{\al}[X] =0$ or there is a 
partition $\lambda\vdash n$ such that 
$s_{\al}[X] = \pm s_{\lambda}[X]$.
In fact, there is a  \emph{straightening} relation 
which allows us to prove that fact. Namely, 
if $\al_{i+1}  > \al_i$, then 
\begin{equation}\label{straight}
s_{(\al_1, \ldots, \al_i, \al_{i+1}, \ldots, \al_k)}[X] = - s_{(\al_1, \ldots, \al_{i+1}-1, \al_{i}+1, \ldots, \al_k)}[X].
\end{equation}

In a remarkable and important paper, Egge, Loehr and Warrington  
\cite{ELW} gave a combinatorial description of how 
to start with a quasi-symmetric function expansion of a homogeneous 
symmetric function $P[X]$ of degree $n$, 
and transform it into an expansion in terms of Schur functions.
The following theorem due to Garsia and Remmel \cite{GR} is implicit 
in the work of \cite{ELW}, but is not explicitly stated and it allows 
one to find the Schur function expansion by using the straightening laws. 
\begin{theorem}[Garsia-Remmel]\label{theorem:main}
	Suppose that $P[X]$ is a symmetric function which is 
	homogeneous of degree $n$ and 
	\begin{equation}\label{Pquasi}
	P[X] = \sum_{\alpha \vDash n} a_{\alpha} F_\alpha[X].
	\end{equation}
	Then 
	\begin{equation}\label{Pschur}
	P[X] = \sum_{\alpha \vDash n} 
	a_{\alpha} s_{\alpha}[X].
	\end{equation}
\end{theorem}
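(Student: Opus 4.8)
The plan is to reduce the statement to the combinatorial straightening algorithm of Egge--Loehr--Warrington and to match it, term by term, against the classical straightening law for Schur functions given in Equation \eqref{straight}. First I would recall that the fundamental quasi-symmetric functions $F_\alpha[X]$ and the Schur functions $s_\lambda[X]$ are both bases for the relevant spaces (quasi-symmetric and symmetric functions of degree $n$, respectively), so the expansion \eqref{Pquasi} is unique, and since $P[X]$ is assumed symmetric, the right-hand side of \eqref{Pschur} is a \emph{candidate} symmetric function that we must show equals $P[X]$. The key observation is that $s_\alpha[X]$ for a \emph{composition} $\alpha$ is defined via the determinant $\Delta_\alpha[X]/\Delta[X]$, which is already a symmetric function, and that it equals $0$ or $\pm s_\lambda[X]$ for a genuine partition $\lambda$; so the sum $\sum_\alpha a_\alpha s_\alpha[X]$ on the right of \eqref{Pschur} is automatically symmetric, and the content of the theorem is that the purely formal substitution $F_\alpha \mapsto s_\alpha$ is \emph{consistent} with the symmetry of $P$.

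The main step is to exploit the Egge--Loehr--Warrington description: they give an explicit sign-reversing involution (or, equivalently, a deterministic ``straightening'' procedure) that takes the quasi-symmetric expansion $\sum_\alpha a_\alpha F_\alpha[X]$ of a symmetric function and collapses it into $\sum_\lambda c_\lambda s_\lambda[X]$ with the $c_\lambda$ read off combinatorially. I would argue that the very same moves that the ELW algorithm performs on the $F_\alpha$ side --- reindexing and cancellation of fundamental quasi-symmetric functions that arise from a symmetric function --- are mirrored precisely by the straightening law \eqref{straight} on the $s_\alpha$ side. Concretely, because $P$ is symmetric, $\sum_\alpha a_\alpha F_\alpha[X]$ can be rewritten using the transition between $\{F_\alpha\}$ and a symmetric basis, and each identity among the $F_\alpha$ forced by symmetry corresponds, under $F_\alpha \mapsto s_\alpha$, to a valid consequence of \eqref{straight} (including the degenerate case $\alpha_i = \alpha_{i+1}-1$, which forces $s_\alpha = 0$, matching the fact that such an $F_\alpha$-pattern cancels in a symmetric expansion). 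Summing these, the formal substitution commutes with straightening, so $\sum_\alpha a_\alpha s_\alpha[X] = \sum_\lambda c_\lambda s_\lambda[X]$ where the $c_\lambda$ are exactly the Schur coefficients of $P[X]$; hence the two sides of \eqref{Pschur} agree.

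I expect the main obstacle to be making the phrase ``the same moves are mirrored'' fully rigorous: one must verify that the ELW cancellation structure on $\{F_\alpha\}$ is generated by exactly the relations that, transported through $F_\alpha \mapsto s_\alpha$, become instances of the Schur straightening rule \eqref{straight} and the vanishing $s_{(\ldots,a,a+1,\ldots)}=0$. A clean way to sidestep much of this bookkeeping is to instead prove the statement by a dimension/uniqueness argument: both $P[X] = \sum_\lambda c_\lambda s_\lambda[X]$ (true Schur expansion) and $Q[X] := \sum_\alpha a_\alpha s_\alpha[X]$ are symmetric; expand $Q[X]$ into Schur functions by straightening each $s_\alpha$ via \eqref{straight}, and expand each resulting $s_\lambda$ back into the $F$-basis using the standard fact $s_\lambda[X] = \sum_{\alpha} (\text{descent count of SYT}) F_\alpha[X]$; then check that the induced $F$-expansion of $Q$ equals $\sum_\alpha a_\alpha F_\alpha[X]$, which is $P[X]$'s $F$-expansion, by the uniqueness of the $F$-expansion. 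This reduces everything to the known compatibility of \eqref{straight} with the Schur-to-fundamental transition, which is a finite, purely combinatorial check, and I would present the argument in that form.
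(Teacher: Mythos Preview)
The paper does not actually supply a proof of this theorem: it is stated with attribution to Garsia and Remmel \cite{GR} and described as ``implicit in the work of \cite{ELW},'' and then immediately used. So there is no in-paper argument to compare against; your task is effectively to reconstruct the Garsia--Remmel argument.

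Your proposal correctly identifies the right ingredients --- the ELW straightening picture, the determinantal straightening law \eqref{straight}, and the Gessel expansion of $s_\lambda$ as a sum of $F_\alpha$ over standard Young tableaux of shape $\lambda$ with descent composition $\alpha$ --- but the logical flow of your second (``cleaner'') approach is inverted and, as written, is not a finite check. You propose to straighten $Q=\sum_\alpha a_\alpha s_\alpha$, re-expand in the $F$-basis, and compare with $\sum_\alpha a_\alpha F_\alpha$; but the $a_\alpha$ are arbitrary subject only to the symmetry constraint, so this is not a single identity one can verify once and for all. The standard route, which you are circling, goes the other way: since $P$ is symmetric, write $P=\sum_\lambda c_\lambda s_\lambda$, insert the Gessel expansion of each $s_\lambda$ to read off $a_\alpha$ as $\sum_\lambda c_\lambda$ times the number of standard tableaux of shape $\lambda$ and descent composition $\alpha$, and substitute back into $\sum_\alpha a_\alpha s_\alpha$. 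By linearity everything then reduces to the single family of identities
\[
\sum_{T} s_{\alpha(T)} \;=\; s_\lambda \qquad (\lambda\vdash n),
\]
where $T$ ranges over standard Young tableaux of shape $\lambda$ and $\alpha(T)$ is its descent composition; \emph{this} is the genuinely finite combinatorial statement, proved in \cite{GR} by a sign-reversing involution that pairs the non-identity tableaux so that their $s_{\alpha(T)}$ cancel under \eqref{straight}. Your first approach, matching ELW moves against \eqref{straight}, is morally the same thing but harder to make precise; I would present the argument via the displayed reduction instead.
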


Recall that $\pides(\sigma)$ is the composition set of $\ides(\sigma)$, then \tref{main} and the 
straightening action allow us to transform $\Hikita_{m,n}[X;q,t]$ into Schur function expansion that
\begin{align}
\Hikita_{m,n}[X; q, t]&=\sum_{\PF\in\PFc_{m,n}} [\ret(\PF)]_{\frac{1}{t}} t^{\area(\PF)}q^{\dinv(\PF)}F_{\pides(\PF)}\nonumber\\
&=\sum_{\PF\in\PFc_{m,n}} [\ret(\PF)]_{\frac{1}{t}} t^{\area(\PF)}q^{\dinv(\PF)}s_{\pides(\PF)}.\label{Hikitas}
\end{align}

From Section 3, we shall use the expression (\ref{Hikitas}) for $\Hikita_{m,n}[X; q, t]$ to prove several
facts about the coefficients in the Schur function expansions of the Rational Shuffle Theorem.

\section{Combinatorial results about Schur function expansions of the $(m,n)$ case}

We shall work on the combinatorial side by studying the Hikita polynomials in this section, and we use the expression of Hikita polynomials in Equation (\ref{Hikitas}).

In the rational $(m,n)$ case, we have $n$ cars, i.e.\ the word of an $(m,n)$-parking function is a permutation of $[n]=\{1,\ldots,n\}$. 
Recall that $\scoeff{\lambda}_{m,n}$ is the coefficient of $s_\la$ in $\Hikita_{m,n}[X; q, t]$.
By Remark \ref{remark:2} in Section 2.1, $[s_{\lambda}]_{m,n} \neq 0$ implies that 
$\lambda$ must be of the form $m^{\alpha_m}\cdots1^{\alpha_1}$ with $\sum_{i=1}^{m}i\alpha_i=n$, i.e.\ $[s_{\lambda}]_{m,n} \neq 0$ only if the partition $\lambda$ has parts of size less than or equal to $m$. In this section, we shall prove the 3 symmetries about $\scoeff{\lambda}_{m,n}$ described in \tref{3}, stated as the following three lemmas.

\begin{result}
	$[s_{1^n}]_{m,n}=[s_n]_{m+n,n}$.
\end{result}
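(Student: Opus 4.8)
The plan is to exhibit an explicit bijection between the parking functions contributing to $[s_{1^n}]_{m,n}$ and those contributing to $[s_n]_{m+n,n}$ that preserves the weight $[\ret(\PF)]_{1/t}\,t^{\area(\PF)}q^{\dinv(\PF)}$. First I would characterize which parking functions actually contribute to each side. By Equation (\ref{Hikitas}), $\PF$ contributes to $[s_{1^n}]_{m,n}$ iff $\pides(\PF)=(1^n)$, i.e.\ $\ides(\PF)=\{1,2,\ldots,n-1\}$, which by the definition of $\ides$ means $\rank(1)>\rank(2)>\cdots>\rank(n)$. By Remark \ref{remark:1} this forces all $n$ cars into distinct columns, so the underlying $(m,n)$-Dyck path has exactly $m$ north steps(one per column) — wait, more precisely it has $n$ north steps total but with the cars spread across columns; since there are $m$ columns available and $n$ cars in distinct columns we need $n\le m$ — hmm, that is not generally true. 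Let me instead recall the standard fact: $\pides(\PF)=(1^n)$ precisely when $\word(\PF)=n(n-1)\cdots 21$, and such parking functions correspond bijectively to $(m,n)$-Dyck paths with the canonical (column-increasing, rank-decreasing) labeling; dually, $\pides(\PF)=(n)$ forces $\word(\PF)=12\cdots n$, which by Remark \ref{remark:2} can only happen when $n\le m$, and such $\PF$ again correspond to Dyck paths with the unique ``increasing'' labeling. So on each side the contributing objects are in weight-preserving bijection with a subset of rational Dyck paths, and the claim reduces to a statement purely about $(m,n)$-Dyck paths versus $(m+n,n)$-Dyck paths, matching $[\ret]_{1/t}t^{\area}q^{\dinvcorr+\tdinv}$.

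Next I would reduce the problem to Dyck paths. On the $s_{1^n}$ side, the labeling is forced and $\tdinv$ equals $\maxdinv$ of the path (the rank-decreasing word is exactly the one maximizing $\tdinv$), so by Definition \ref{definition:dinv1} we get $\dinv(\PF)=\pdinv(\Pi(\PF))$, and the generating function is $\sum_{P\in\Dyck_{m,n}}[\ret(P)]_{1/t}t^{\area(P)}q^{\pdinv(P)}$ — this is precisely the (extended) rational $q,t$-Catalan number. On the $s_n$ side, the word is $12\cdots n$, every pair of cars has $\rank(i)<\rank(i+k)$ so $\tdinv$ counts pairs with rank gap $<m+n$; I would compute that $\dinv$ here also simplifies to a path statistic. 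Then the content of the lemma is the identity
\begin{equation*}
\sum_{P\in\Dyck_{m,n}}[\ret(P)]_{1/t}t^{\area(P)}q^{\dinv(P)} = \sum_{Q\in\Dyck_{m+n,n}, \word(Q\text{-labeling})=12\cdots n} [\ret(Q)]_{1/t}t^{\area(Q)}q^{\dinv(Q)}.
\end{equation*}
The natural bijection is the map that, given an $(m,n)$-Dyck path $P$, prepends $n$ east steps (shifting the whole path right by $n$), or equivalently uses the classical $\nabla$-twist: this is the combinatorial shadow of \lref{1}, $\nabla\Qmn{m,n}\nabla^{-1}=\Qmn{m+n,n}$. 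Indeed one expects adding a full diagonal's worth of area (the $n$ new columns below the path contribute a predictable shift) while $\dinv$ is preserved, and $\ret$ is unchanged.

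The main obstacle will be verifying that this column-prepending map sends the statistics correctly — specifically that $\dinv$ (through $\pdinv$ and $\dinvcorr$) is invariant and that $\area$ transforms the way the algebra predicts, and crucially that the image lands exactly on the subset of $(m+n,n)$-Dyck paths whose canonical labeling has word $12\cdots n$ (equivalently, the $s_n$-contributing paths). I would handle this by a careful arm/leg bookkeeping on the partition $\lambda(P)$ versus $\lambda$ of the shifted path, using the $\dinvcorr$ picture in \fref{dinvcorr}: the $n$ new columns of cells above the shifted path each sit in a region where the slope comparison with $\frac{n}{m+n}$ versus $\frac{n}{m}$ flips in a controlled way, and one checks the net contribution to $\pdinv - \maxdinv$ is zero. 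An alternative, cleaner route — if the direct bijection proves fiddly — is to invoke \lref{1} directly on the symmetric function side: apply $\nabla$ to $\Qmn{m,n}(1)$, extract the $s_{1^n}$ coefficient using $\nabla s_{1^n} = \ldots$ (or rather track how $\nabla$ acts, noting $\nabla^{-1}(1)=1$ and the coefficient of $s_n$ in $\Qmn{m+n,n}(1)=\nabla\Qmn{m,n}\nabla^{-1}(1)$), but since this section is explicitly devoted to combinatorial proofs I would present the bijection and relegate the algebraic shortcut to a remark.
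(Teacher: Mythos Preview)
Your overall strategy matches the paper's: identify that $[s_{1^n}]_{m,n}$ is carried by parking functions with word $n\cdots 21$ (one per $(m,n)$-Dyck path) and $[s_n]_{m+n,n}$ by those with word $12\cdots n$ (one per $(m+n,n)$-Dyck path with no two consecutive north steps), then build a weight-preserving bijection between these two families of paths. That part is exactly right, and the reduction ``on the $s_{1^n}$ side $\tdinv=\maxdinv$ so $\dinv=\pdinv$'' is the same observation the paper uses.

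The gap is the bijection itself. Prepending $n$ east steps (shifting the path right by $n$) does \emph{not} give a Dyck path: after $n$ east steps you are at $(n,0)$, strictly below the diagonal $y=\tfrac{n}{m+n}x$. More importantly, the target set consists precisely of $(m+n,n)$-paths with no consecutive north steps, and a horizontal shift does nothing to separate the north steps. The correct map, which the paper calls the \emph{horizontal stretch} $\hstr$, inserts one east step immediately after each north step --- equivalently, it replaces the coarea partition $\lambda=(\lambda_1,\ldots,\lambda_{n-1})$ by $(\lambda_1+n-1,\lambda_2+n-2,\ldots,\lambda_{n-1}+1)$, pushing a staircase into $\PF$. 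This lands exactly on paths with strictly decreasing column heights, i.e.\ the word-$12\cdots n$ paths, and visibly preserves $\area$ and $\ret$.

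Two smaller corrections. First, on the $s_n$ side you have the rank inequalities backwards: word $12\cdots n$ means $\rank(1)>\rank(2)>\cdots>\rank(n)$, so $\tdinv=0$ there (not ``pairs with rank gap $<m+n$''). Hence $\dinv(\hstr(\PF))=\dinvcorr(\hstr(\PF))$. Second, the heart of the dinv argument is therefore not ``$\pdinv$ is invariant under the shift'' but rather the identity
\[
\dinvcorr(\hstr(\PF))\;=\;\dinvcorr(\PF)+\tdinv(\PF),
\]
i.e.\ the staircase adds exactly enough new dinvcorr-cells to compensate for the lost $\tdinv$. The paper proves this by a row-by-row arm/leg count: in each row the staircase adds one cell and lengthens the relevant diagonal segment by exactly one east step, matching the two tdinv configurations of \fref{tdinv123} with the two dinvcorr configurations of \fref{dinvcorr}. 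Your proposed ``arm/leg bookkeeping'' is the right instinct, but it has to be applied to the staircase map, not the shift.
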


Note that a parking function with $\pides$ $n$ must have word $12\cdots n$, and a parking function with $\pides$ $1^n$ must have word $n\cdots 21$. 

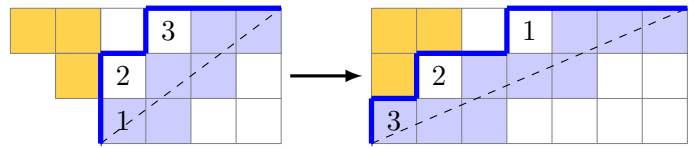
\begin{figure}[ht!]
	\centering	
	\begin{tikzpicture}[scale=0.6]
	\fillshadea{0/2,0/3,-1/3,7/2,7/3,8/3}
	\draw[help lines] (-2,3) grid (0,2) grid (-1,1);
	\fillshade{1/1,2/1,2/2,3/2,3/3,4/3,7/1,8/1,9/1,9/2,10/2,11/2,11/3,12/3,13/3};
	\PFmnu{0,0}{4}{3}{0/1,0/2,1/3,0/0};
	\arrowx{4.2,1.5}{5.8,1.5};
	\PFmnu{6,0}{7}{3}{0/3,1/2,3/1,0/0};
	\end{tikzpicture}
	\caption{Bijection between $\PFc_{m,3}$ with word $123$ and $\PFc_{m+3,3}$ with word $321$.}
	\label{fig:7}	
\end{figure}

A parking function in $\PFc_{m,n}$ with word $n\cdots 21$ corresponds to a unique $(m,n)$-Dyck path, and a parking function in $\PFc_{m+n,n}$ with word $12\cdots n$ corresponds to a unique $(m+n,n)$-Dyck path with no consecutive north steps. As shown is \fref{7}, we can obtain a parking function in $\PFc_{m+n,n}$ with word $12\cdots n$ by pushing a staircase into a parking function $\PF\in\PFc_{m,n}$ with word $n\cdots 21$. Given  $\PF\in\PFc_{m,n}$ with word $n\cdots 21$, let $\lambda=\lambda(\PF)$, we define $\hstr(\PF)\in\PFc_{m+n,n}$, the {\em horizontal stretch} of $\PF$, to be the parking function with word $12\cdots n$ and $\lambda(\hstr(\PF))=(\lambda_1+n-1,\lambda_2+n-2,\ldots,\lambda_{n-1}+1)$, then
\begin{theorem}\label{theorem:comb1}
	\begin{align*}
		\hstr:\ \{\PF\in\PFc_{m,n}:\word(\PF)=n\cdots 21\}&\rightarrow\{\PF\in\PFc_{m+n,n}:\word(\PF)=12\cdots n\},\\
		\PF&\mapsto \hstr(\PF)
	\end{align*}
	is a bijection, and 
	\begin{align}
	\label{harea}\area(\hstr(\PF))&=\area(\PF),\\
	\label{hdinv}\dinv(\hstr(\PF))&=\dinv(\PF),\\
	\label{hret}\ret(\hstr(\PF))&=\ret(\PF).
	\end{align}
\end{theorem}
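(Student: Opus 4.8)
The plan is to recognize $\hstr$ as an elementary operation on the underlying lattice paths and then verify the three equalities one strip at a time. Reading the coarea partition $\la(\PF)=(\la_1,\dots,\la_{n-1},0)$ from the top, the $i$-th row sits in the horizontal strip through the $(n-i+1)$-st north step (counted from the bottom), and $\la_i$ is exactly the $x$-coordinate of that north step. Hence the prescription $\la(\hstr(\PF))=(\la_1+n-1,\dots,\la_{n-1}+1)$ says precisely that the $k$-th north step is shifted right by $k-1$; equivalently, \emph{$\hstr$ replaces every $N$-step by $NE$}. This manifestly produces an $(m+n,n)$-path in which every $N$ is immediately followed by an $E$ — so no two consecutive $N$'s — and conversely every such path is recovered by deleting the $E$ that follows each $N$. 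What remains is to see that this deletion always returns a path lying weakly above $y=\tfrac nm x$, and here I use the arithmetic coincidence that the Dyck condition for an $(m,n)$-path, $\la_i\le\tfrac{m(n-i)}{n}$ for all $i$, is \emph{literally the same inequality} as the Dyck condition $(\la+\delta)_i\le\tfrac{(m+n)(n-i)}{n}$ for the doubled path, because $(\la+\delta)_i=\la_i+(n-i)$ and $\tfrac{(m+n)(n-i)}{n}=\tfrac{m(n-i)}{n}+(n-i)$. Thus $\la\mapsto\la+\delta$ is a bijection from coarea partitions of paths in $\Dyck_{m,n}$ to coarea partitions of paths in $\Dyck_{m+n,n}$ having distinct parts, i.e.\ no two consecutive $N$-steps; combined with the already-stated facts that the word $n\cdots21$ determines a unique labeling of a path in $\Dyck_{m,n}$ and the word $12\cdots n$ determines a unique labeling of a path in $\Dyck_{m+n,n}$ with no consecutive $N$, this gives the asserted bijection.

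For (\ref{harea}): in the horizontal strip through the $k$-th north step, the cells counted by $\area$ are those lying strictly between the path and the main diagonal; under $\hstr$ the north step of that strip moves right by $k-1$ and the diagonal moves right by exactly $k-1$ as well ($\tfrac{m(k-1)}{n}\mapsto\tfrac{(m+n)(k-1)}{n}=\tfrac{m(k-1)}{n}+(k-1)$), so the counted set in that strip is merely translated and its cardinality is unchanged. For (\ref{hret}): this only matters when $d=\gcd(m,n)>1$, say $m=dm'$, $n=dn'$; the points $(im',in')$ all lie \emph{on} the line $y=\tfrac nm x=\tfrac{n'}{m'}x$, so no Dyck path passes strictly below any of them, and a short computation from the step-by-step description of $\hstr$ (the $k$-th north step moves right by $k-1$) shows that $\Pi(\PF)$ passes through $(im',in')$ if and only if $\Pi(\hstr(\PF))$ passes through $(i(m'+n'),in')$ — the sole discrepancy between the two conditions being a borderline case ruled out precisely by the path being weakly above the diagonal. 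Taking the least such $i$ yields (\ref{hret}).

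The statistic $\dinv$ is the main obstacle. Work with $\dinv=\tdinv+\dinvcorr$ and $\dinvcorr(P)=\pdinv(P)-\maxdinv(P)$. The word $n\cdots21$ places the cars on the north steps in increasing order of rank; this is a legal labeling (within a column the higher step carries the larger rank) and it makes \emph{every} pair of north steps with rank gap less than $m$ contribute to $\tdinv$, so $\tdinv(\PF)=\maxdinv(\Pi(\PF))$ and hence $\dinv(\PF)=\pdinv(\Pi(\PF))$. The word $12\cdots n$ places the cars in \emph{decreasing} rank order, so no pair contributes and $\tdinv(\hstr(\PF))=0$, whence $\dinv(\hstr(\PF))=\dinvcorr(\Pi(\hstr(\PF)))$. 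Therefore (\ref{hdinv}) is equivalent to the purely path-theoretic identity
\[
\pdinv(P)=\dinvcorr(\hstr(P))\qquad\text{for every }P\in\Dyck_{m,n},
\]
where $\hstr(P)$ denotes the $NE$-doubled path. Proving this identity is the crux. I would deduce it from the arm--leg formulas for $\pdinv$ and $\dinvcorr$, using $\la(\hstr(P))=\la(P)+\delta$: split the cells of $\la(P)+\delta$ into those inherited from $\la(P)$ and those of the added staircase, track how arm and leg change both under adding $\delta$ and under replacing $\tfrac mn$ by $\tfrac{m+n}{n}$, and show that the signed $\dinvcorr$-contributions of the staircase cells sum to exactly $\maxdinv(P)=\tdinv(\PF)$ — the amount by which $\dinvcorr(P)$ falls short of $\pdinv(P)$. (Equivalently one may run the argument through the four cell types of \fref{dinvcorr}.) This cell-matching — deciding which cells of $\la(P)+\delta$ satisfy the shifted arm/leg inequalities — is the one genuinely delicate step; \fref{7} and small cases make the pattern transparent, but the uniform verification is where the work lies.
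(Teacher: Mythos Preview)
Your reduction is exactly the paper's: bijectivity and the area/ret identities follow from the observation that $\hstr$ inserts an $E$ after every $N$ (so each horizontal strip and each diagonal touch-point is simply translated), and for dinv you correctly reduce to the path identity $\pdinv(P)=\dinvcorr(\hstr(P))$ by noting that the word $n\cdots21$ achieves $\tdinv=\maxdinv$ while $12\cdots n$ gives $\tdinv=0$. Up to this point your argument and the paper's coincide.

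The gap is that you stop at the reduction. You say you ``would deduce'' the identity by splitting the cells of $\la(P)+\delta$ and tracking arm/leg shifts, and you explicitly flag the uniform cell-matching as ``where the work lies'' --- but you do not do it. That is precisely the content of the paper's proof, and it is not routine bookkeeping: a direct arm/leg comparison on all cells is messy because both the shape and the slope change simultaneously. The paper instead organizes the count geometrically. For each row $r$ and each leg value $i$, it observes that $\la(\hstr(P))$ has exactly one more cell in row $r$ with leg $i$ than $\la(P)$ does, with arms shifted up by $i$. Firing two lines of slope $n/m$ (respectively $n/(m+n)$) from the endpoints of the north step $N_1$ in row $r$ down to the run of east steps below these cells, the paper shows that the segment $\overline{AB}$ cut out on those east steps lengthens by exactly one unit under $\hstr$, and hence covers one additional whole east step in precisely two configurations --- which it then identifies, case by case, with the two tdinv configurations of \fref{tdinv123}. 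This matches each lost tdinv of $\PF$ to a gained unit of $\dinvcorr$ in $\hstr(\PF)$ and closes the argument. Your proposal gestures at ``the four cell types of \fref{dinvcorr}'' as an alternative route, but without carrying out either version the dinv equality remains unproven.
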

\begin{proof}
	The bijectivity of the map $\hstr$ is clear since the map is invertible. Comparing the coarea of both parking functions immediately proves Equations (\ref{harea}) and (\ref{hret}). To prove Equation (\ref{hdinv}), recall that $\dinv(\PF)=\tdinv(\PF)+\dinvcorr(\PF)$, we shall compare the two components of dinv, i.e.\ tdinv and dinvcorr.
	
	For a parking function $\PF\in\PFc_{m,n}$ with $\word(\PF)=n\cdots 21$, its temporary dinv reaches the maximum possible value of its path $\Pi(\PF)$, i.e.\ any two north steps with rank difference less than $m$ will contribute 1 to tdinv.
	For any two north steps, we fire two lines parallel to the diagonal from the two end points of the upper north step, then rank difference less than $m$ means that either the upper line or the lower line intersects the lower north step. The two cases are pictured in \fref{tdinv123}.
	
	On the other hand, the parking function $\hstr(\PF)\in\PFc_{m+n,n}$ 
	always has no tdinv since \\$\word(\hstr(\PF))=12\cdots n$. We shall show that the increment of dinvcorr makes up for the missing tdinv.
	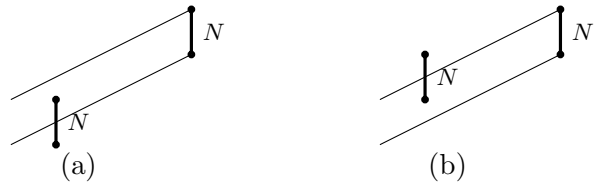
\begin{figure}[ht!]
		\centering	
		\begin{tikzpicture}[scale=0.6]
		\draw[very thick] (1,0) circle[radius=0.05,fill]--(1,1) circle[radius=0.05,fill];
		\draw[very thick] (4,2) circle[radius=0.05,fill]--(4,3) circle[radius=0.05,fill];
		\draw (0,0)--(4,2)--(4,3)--(0,1);
		\fillll{2}{1}{N};\fillll{5}{3}{N};
		\draw (1.5,-.5) node {(a)};
		\end{tikzpicture}
		\begin{tikzpicture}[scale=0.6]
		\draw[very thick] (1,1) circle[radius=0.05,fill]--(1,2) circle[radius=0.05,fill];
		\draw[very thick] (4,2) circle[radius=0.05,fill]--(4,3) circle[radius=0.05,fill];
		\draw (0,0)--(4,2)--(4,3)--(0,1);
		\path (-3,0);
		\fillll{2}{2}{N};\fillll{5}{3}{N};
		\draw (1.5,-.5) node {(b)};
		\end{tikzpicture}
		\caption{Pairs of north steps contributing to tdinv.}
		\label{fig:tdinv123}
	\end{figure}
	
	For a parking function $\PF\in\PFc_{m,n}$, suppose that there are $j$ cells in row $r$ of $\PF$ (counting from bottom to top) with leg $i$ in the English partition $\la(\PF)$, and their arms are $a,a+1,\ldots,a+j-1$, pictured in \fref{dinvchange} (a). We fire two lines with slope $\frac{n}{m}$ from the two end points of the north step (called $N_1$) in row $r$, then they intersect the east steps (called $EEs$) below the $j$ cells at points $A,B$ which have horizontal distances $\frac{mi}{n}$ and $\frac{m(i+1)}{n}$ to $N_1$.
	
	Now consider the parking function $\hstr(\PF)\in\PFc_{m+n,n}$. By definition of $\hstr$, there are $j+1$ cells in row $r$ with leg $i$ in the partition $\la(\hstr(\PF))$, and their arms are $a+i,a+i+1,\ldots,a+i+j$, pictured in \fref{dinvchange} (b). We again fire two lines with slope $\frac{n}{m+n}$ from the two end points of the north step $N_1$ in row $r$, then they intersect the east steps below the $j+1$ cells at points $A,B$ which have horizontal distances $\frac{mi}{n}+i$ and $\frac{m(i+1)}{n}+i+1$ to $N_1$.
	\begin{figure}[ht!]
		\centering
		\begin{tikzpicture}[scale=0.6]
		\draw (0,0) grid (3,-1);
		\draw[ decorate,decoration={brace,amplitude=5pt,mirror},xshift=0.4pt,yshift=-0.4pt] (3,0) -- (0,0) node[midway,yshift=.5cm] {$j$ cells};
		\draw[ decorate,decoration={brace,amplitude=5pt,mirror},xshift=0.4pt,yshift=-0.4pt] (6,0) -- (3,0) node[midway,yshift=.5cm] {$a$};
		\draw[ decorate,decoration={brace,amplitude=5pt,mirror},xshift=0.4pt,yshift=-0.4pt] (0,-1) -- (0,-3) node[midway,xshift=-.5cm] {$i$};
		\draw[ultra thick] (0,-3) -- (3,-3) -- (3,-2);
		\node at (3.5,-2.5) {$N_2$};
		\draw[ultra thick, dashed] (3,-2) -- (6,-1);
		\draw[ultra thick] (6,-1) -- (6,0);
		\node at (6.5,-.5) {$N_1$};
		\node at (-1.3,-.5) {row $r$};
		\node at (1.5,-3.5) {$EEs$};
		
		\draw (6,-1) -- (2.5,-3) -- (.7,-3) -- (6,0);
		\draw[help lines] (2.5,-3) -- (2.5,-5);
		\draw[help lines] (.7,-3) -- (.7,-6);
		\draw[help lines] (6,-1) -- (6,-6);
		\draw[ decorate,decoration={brace,amplitude=5pt,mirror},xshift=0.4pt,yshift=-0.4pt] (0.7,-5.5) -- (6,-5.5) node[midway,yshift=-.5cm] {$\frac{m(i+1)}{n}$};
		\draw[ decorate,decoration={brace,amplitude=5pt,mirror},xshift=0.4pt,yshift=-0.4pt] (2.5,-4) -- (6,-4) node[midway,yshift=-.5cm] {$\frac{mi}{n}$};
		\draw (3,-8) node {(a)};
		\node at (.7,-2.5) {$A$};
		\node at (2.5,-2.5) {$B$};
		\path (9,0);
		\end{tikzpicture}
		\begin{tikzpicture}[scale=0.6]
		\draw (0,0) grid (4,-1);
		\draw[ decorate,decoration={brace,amplitude=5pt,mirror},xshift=0.4pt,yshift=-0.4pt] (4,0) -- (0,0) node[midway,yshift=.5cm] {$j+1$ cells};
		\draw[ decorate,decoration={brace,amplitude=5pt,mirror},xshift=0.4pt,yshift=-0.4pt] (8,0) -- (4,0) node[midway,yshift=.5cm] {$a+i$};
		\draw[ decorate,decoration={brace,amplitude=5pt,mirror},xshift=0.4pt,yshift=-0.4pt] (0,-1) -- (0,-3) node[midway,xshift=-.5cm] {$i$};
		\draw[ultra thick] (0,-3) -- (4,-3) -- (4,-2);
		\node at (4.5,-2.5) {$N_2$};
		\draw[ultra thick, dashed] (4,-2) -- (8,-1);
		\draw[ultra thick] (8,-1) -- (8,0);
		\node at (8.5,-.5) {$N_1$};
		\node at (-1.3,-.5) {row $r$};
		\node at (2,-3.5) {$EEs$};
		
		\draw (8,-1) -- (3.5,-3) -- (.7,-3) -- (8,0);
		\draw[help lines] (3.5,-3) -- (3.5,-5);
		\draw[help lines] (.7,-3) -- (.7,-6);
		\draw[help lines] (8,-1) -- (8,-6);
		\draw[ decorate,decoration={brace,amplitude=5pt,mirror},xshift=0.4pt,yshift=-0.4pt] (0.7,-5.5) -- (8,-5.5) node[midway,yshift=-.5cm] {$\frac{m(i+1)}{n}+i+1$};
		\draw[ decorate,decoration={brace,amplitude=5pt,mirror},xshift=0.4pt,yshift=-0.4pt] (3.5,-4) -- (8,-4) node[midway,yshift=-.5cm] {$\frac{mi}{n}+i$};
		\node at (.7,-2.5) {$A$};
		\node at (3.5,-2.5) {$B$};
		\draw (4,-8) node {(b)};
		\end{tikzpicture}
		\caption{Cells in row $r$ with leg $i$.}
		\label{fig:dinvchange}
	\end{figure}
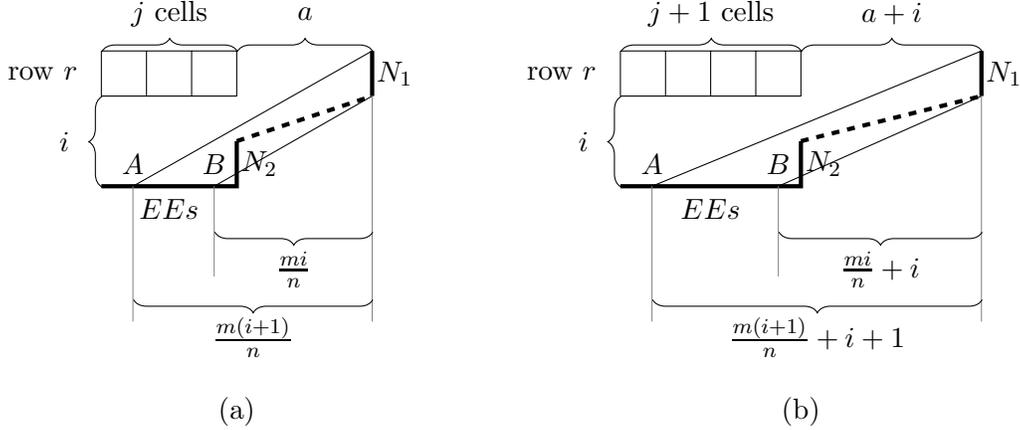
	
	Now recall the definition of the dinv correction. The dinvcorr contribution of $N_1$ in each picture is equal to the whole east steps contained in line segment $\overline{AB}$. The line segment $\overline{AB}$ in $\hstr(\PF)$ contains one more east step than $\overline{AB}$ in $\PF$ in the following 2 cases:
	\begin{enumerate}[(1)]
		\item In $\PF$, $A$ is not on $EEs$ but $B$ is on $EEs$.
		\item In $\PF$, $A$ is on $EEs$.
	\end{enumerate}
	In case (1), the car in row $r$ of $\PF$ produces a tdinv with the car in the row immediately below $EEs$; in case (2), the car in row $r$ of $\PF$ produces a tdinv with the car in the row of the next north step that the upper line fired from $N_1$ intersects. Thus, the new dinvcorr in case (1) and case (2) matches the tdinv in the two cases in \fref{tdinv123}, and the increment of dinv correction is equal to $\tdinv(\PF)$, which proves the theorem.
\end{proof}
Since $\hstr$ is an (area,dinv,ret)-preserving bijection, $[s_{1^n}]_{m,n}=[s_n]_{m+n,n}$ follows immediately.

\ 
\begin{result}
	$[s_{m^{\alpha_m}\cdots1^{\alpha_1}}]_{m,n}=[s_{m^{\alpha_m+1}\cdots1^{\alpha_1}}]_{m,n+m}$
\end{result}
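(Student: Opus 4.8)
The plan is to mirror the proof of the preceding lemma (the $\hstr$ bijection establishing $[s_{1^n}]_{m,n}=[s_n]_{m+n,n}$): construct an explicit $(\area,\dinv,\ret)$-preserving map --- a \emph{vertical stretch} $\vstr$ --- that splices a full ``diagonal row'' of $m$ new cars into an $(m,n)$-parking function, and then transport the bijection along the straightening action as in \tref{comb1}. Throughout, recall from Remark~\ref{remark:2} that $[s_\mu]_{m,n}\neq 0$ forces every part of $\mu$ to be $\le m$, and that by $(\ref{Hikitas})$ the coefficient $[s_\mu]_{m,n}$ is the sum, over $\PF\in\PFc_{m,n}$, of the sign with which $s_{\pides(\PF)}$ straightens toward $s_\mu$ (taken $0$ if it does not straighten to $\pm s_\mu$) times $[\ret(\PF)]_{1/t}t^{\area(\PF)}q^{\dinv(\PF)}$.

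First I would record two facts about the straightening relation $(\ref{straight})$. (i) If every part of a composition $\beta$ is $\le m$, then every straightening move keeps all parts $\le m$ --- the move $(\beta_i,\beta_{i+1})\mapsto(\beta_{i+1}-1,\beta_i+1)$ with $\beta_i<\beta_{i+1}\le m$ yields $\beta_{i+1}-1\le m-1$ and $\beta_i+1\le\beta_{i+1}\le m$ --- and it never increases the number of parts equal to $m$. Hence if $s_\beta=\pm s_\mu\neq 0$ and $\mu$ has at least one part equal to $m$, then $\beta$ has at least one part equal to $m$ too. (ii) If $\beta$ has all parts $\le m$ and straightens to $\epsilon\, s_\lambda$, then the composition obtained by prepending a part $m$ to $\beta$ straightens to $\epsilon\, s_{(m,\lambda_1,\lambda_2,\dots)}$: straighten the suffix first; during that process position $1$ stays equal to $m$ and, by (i), position $2$ stays $\le m$, so $(\ref{straight})$ is never triggered at positions $(1,2)$; and $(m,\lambda_1,\dots)$ is already a partition since $\lambda_1\le m$. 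Note $(m,\lambda_1,\lambda_2,\dots)$ is exactly $\lambda$ with one more part equal to $m$, i.e.\ $m^{\alpha_m+1}\cdots 1^{\alpha_1}$ when $\lambda=m^{\alpha_m}\cdots 1^{\alpha_1}$.

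Next I would define $\vstr\colon\PFc_{m,n}\to\PFc_{m,n+m}$. Given $\PF$, relabel its cars by $i\mapsto i+m$ and insert into the bottom of the path one new north step in each of the $m$ columns, placing the new cars $1,2,\dots,m$ so that $\rank(1)>\rank(2)>\cdots>\rank(m)$, with $\rank(m)$ below the rank of every old car and with the relative order of all the old ranks unchanged; geometrically this is the ``pushing a staircase in'' operation of \fref{7} performed in the $n$-direction, and one checks the new path remains weakly above $y=\tfrac{n+m}{m}x$ and the columns stay increasing. Since the new cars lie one per column with strictly decreasing ranks, all below the old ranks, we get $\ides(\vstr(\PF))=\{m\}\cup\bigl(m+\ides(\PF)\bigr)$, so $\pides(\vstr(\PF))$ is $\pides(\PF)$ with a part $m$ prepended. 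I would check that $\area$ and $\ret$ are preserved by directly comparing the coarea region and the first return point, and that $\dinv$ is preserved via $\dinv=\tdinv+\dinvcorr$ exactly as in the proof of \tref{comb1}: the $m$ new cars create no $\tdinv$ among themselves, and the new $\dinvcorr$ contributed by the inserted cells compensates precisely for whatever $\tdinv$ the new cars create against the old ones, using the slope-$\tfrac{n+m}{m}$ line picture of \fref{dinvchange}. Combining this with fact~(ii), the contribution of $\PF$ to $[s_{m^{\alpha_m}\cdots 1^{\alpha_1}}]_{m,n}$ equals, with the same sign and the same $q,t$-monomial, the contribution of $\vstr(\PF)$ to $[s_{m^{\alpha_m+1}\cdots 1^{\alpha_1}}]_{m,n+m}$.

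The remaining step --- which I expect to be the main obstacle --- is to show that $\vstr$ accounts for \emph{all} of $[s_{m^{\alpha_m+1}\cdots 1^{\alpha_1}}]_{m,n+m}$. The image of $\vstr$ consists only of parking functions whose $\pides$-composition begins with a part $m$, whereas once $m\ge 3$ other parking functions in $\PFc_{m,n+m}$ may contribute to this coefficient (with signs). By fact~(i), every such contributing parking function at least has some part equal to $m$ in its $\pides$-composition, which corresponds to $m$ cars of strictly decreasing rank filling all $m$ columns; so the natural candidate for $\vstr^{-1}$ is to locate a canonical such block, delete those $m$ cars, and slide the rest down. Proving this is well-defined and two-sided --- in particular when $\pides$ has several parts equal to $m$, where the choice of block has to be reconciled with the straightening signs --- is the crux, and I expect it to require a small auxiliary sign-reversing, $(\area,\dinv,\ret)$-preserving involution normalizing the position of the distinguished block. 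For $m\le 2$ this step is vacuous: any non-partition composition with parts $\le 2$ already has $s_\beta=0$ by $(\ref{straight})$, so only $\pides=m^{\alpha_m+1}\cdots 1^{\alpha_1}$ contributes and $\vstr$ is manifestly onto it, which already recovers the instances consistent with \tref{2k2} and Leven's theorem.
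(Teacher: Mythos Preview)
Your approach is the paper's approach: define the vertical stretch $\vstr$ and check it preserves $\area$, $\dinv$, $\ret$ while prepending a part $m$ to $\pides$. The paper states this as \tref{comb2} (a bijection between the sets $\{\PF\in\PFc_{m,n}:\pides(\PF)=m^{\alpha_m}\cdots1^{\alpha_1}\}$ and $\{\PF'\in\PFc_{m,n+m}:\pides(\PF')=m^{\alpha_m+1}\cdots1^{\alpha_1}\}$) and gives essentially the proof sketch you outline for the statistic-preservation.

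The ``main obstacle'' you flag is real --- the paper in fact glosses over it --- but it is much easier than you suggest; no auxiliary sign-reversing involution is needed. Your fact~(i) can be sharpened: for a composition $\beta=(\beta_1,\dots,\beta_k)$ with every $\beta_i\le m$, if $s_\beta=\pm s_\mu\neq 0$ then $\mu_1=m$ \emph{if and only if} $\beta_1=m$. Indeed, from the determinantal description one has $\mu_1=\max_i(\beta_i-i+1)$; since $\beta_i\le m$ this maximum is $\le m$, with equality forcing $i=1$ and $\beta_1=m$. Consequently, a parking function $\PF'\in\PFc_{m,n+m}$ contributes to $[s_{m^{\alpha_m+1}\cdots1^{\alpha_1}}]_{m,n+m}$ only when $\pides(\PF')$ \emph{begins} with $m$.

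Now observe that $\vstr$ is actually a bijection from all of $\PFc_{m,n}$ onto $\{\PF'\in\PFc_{m,n+m}:\pides(\PF')_1=m\}$. For surjectivity: if $\pides(\PF')_1=m$ then cars $1,\dots,m$ have strictly decreasing ranks and hence lie in $m$ distinct columns by Remark~\ref{remark:1}; the column-increasing condition then forces each of them to be the bottom car of its column (any car below car $i\le m$ would have a smaller label, hence be one of $1,\dots,m$, contradicting distinctness of columns). Deleting these $m$ bottom cars and relabelling $j\mapsto j-m$ is exactly $\vstr^{-1}$. Combining this with your fact~(ii), every contribution to $[s_{m^{\alpha_m+1}\cdots1^{\alpha_1}}]_{m,n+m}$ is the image under $\vstr$ of a contribution to $[s_{m^{\alpha_m}\cdots1^{\alpha_1}}]_{m,n}$ with the same sign and $q,t$-weight, finishing the proof. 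Your proposed ``canonical block'' deletion and the worry about multiple parts equal to $m$ are unnecessary: the relevant block is always the first one, and it always sits at the bottom.
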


This is a rewording of \tref{3} (b). 
For a parking function $\PF\in\PFc_{m,n}$, we define a map $\vstr$, {vertical stretch}, that we push a staircase down to $\PF$, then replace the car $i$ in $\PF$ by $i+m$, and fill the bottom of the $m$ columns of the new parking function with cars $1,\ldots,m$ in a rank decreasing way  to get $\vstr(\PF)$, as shown in \fref{8}.
\begin{figure}[ht!]
	\centering
	\begin{tikzpicture}[scale=0.6]
	\fillshadea{1/5,1/6,2/5}
	\fillshade{1/1,1/2,2/2,2/3,3/3,3/4};
	\draw[help lines] (2,5) grid (0,4) grid (1,6);
	\PFmnu{0,0}{3}{4}{0/2,0/3,1/1,1/4,0/0};
	\arrowx{3.7,2}{5.3,2};
	\path (6,0);
	\end{tikzpicture}
	\begin{tikzpicture}[scale=0.6]
	\fillshadea{1/6,1/7,2/7}
	\fillshade{1/1,1/2,1/3,2/3,2/4,2/5,3/5,3/6,3/7};
	\PFmnu{0,0}{3}{7}{0/{3},0/2,0/3,1/2,1/1,1/4,2/1,0/0};
	\fillshadesmall{1/1,2/4,3/7};
	\draw[dashed] (0,0) -- (3,7);
	\arrowx{3.7,2}{5.3,2};
	\path (6,0);
	\end{tikzpicture}
	\begin{tikzpicture}[scale=0.6]
	\fillshadea{1/6,1/7,2/7}
	\fillshade{1/1,1/2,1/3,2/3,2/4,2/5,3/5,3/6,3/7};
	\PFmnu{0,0}{3}{7}{0/{3},0/5,0/6,1/2,1/4,1/7,2/1,0/0};
	\fillshadesmall{1/1,2/4,3/7};
	\draw[dashed] (0,0) -- (3,7);
	\arrowx{3.7,2}{5.3,2};
	\path (6,0);
	\end{tikzpicture}
	\begin{tikzpicture}[scale=0.6]
	\fillshadea{1/6,1/7,2/7}
	\fillshade{1/1,1/2,1/3,2/3,2/4,2/5,3/5,3/6,3/7};
	\PFmnu{0,0}{3}{7}{0/{3},0/5,0/6,1/2,1/4,1/7,2/1,0/0};
	\end{tikzpicture}
	\caption{Bijection between $\PFc_{3,n}$ with $\pides\ 3^a2^b1^c$ and $\PFc_{3,n+3}$ with $\pides\ 3^{a+1}2^b1^c$.}
	\label{fig:8}	
\end{figure}

Similar to \tref{comb1}, we have the following theorem about the vertical stretch action.
\begin{theorem}\label{theorem:comb2}
	\begin{align*}
		\vstr:\ \{\PF\in\PFc_{m,n}:\pides(\PF)=m^{\alpha_m}\cdots1^{\alpha_1}\}&\rightarrow\{\PF\in\PFc_{m,n+m}:\pides(\PF)=m^{\alpha_m+1}\cdots1^{\alpha_1}\},\\
		\PF&\mapsto \vstr(\PF)
	\end{align*}
	is a bijection, and 
	\begin{align}
	\label{varea}\area(\vstr(\PF))&=\area(\PF),\\
	\label{vdinv}\dinv(\vstr(\PF))&=\dinv(\PF),\\
	\label{vret}\ret(\vstr(\PF))&=\ret(\PF).
	\end{align}
\end{theorem}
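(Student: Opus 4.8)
The plan is to follow the template of the proof of \tref{comb1}, splitting the argument into invertibility, the easy statistics $\area$ and $\ret$, and the substantive statistic $\dinv$.

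\emph{Invertibility.} Described in terms of column heights, $\vstr$ sends the path with heights $(n_0,\dots,n_{m-1})$ to the one with heights $(n_0+1,\dots,n_{m-1}+1)$ (the ``staircase pushed down''), adds $m$ to every car, and drops cars $1,\dots,m$ into the $m$ new bottom cells in order of decreasing rank. A short rank computation shows that if $g_x:=n_0+\dots+n_{x-1}$ is the height at which $\Pi(\PF)$ enters column $x$, then in $\vstr(\PF)$ every old car's rank rises by \emph{exactly} $m$, while the new car of column $x$ inherits the rank that the point $(x,g_x)$ carried in the old grid (all of this remaining valid for the generalized rank, since $\gcd(m,n+m)=\gcd(m,n)$); from these two facts one reads off both $\pides(\vstr(\PF))=m^{\alpha_m+1}\cdots1^{\alpha_1}$ and the placement of $1,\dots,m$. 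To invert, the key structural claim is: if $\PF'\in\PFc_{m,n+m}$ has $\pides(\PF')=m^{\alpha_m+1}\cdots1^{\alpha_1}$, then cars $1,\dots,m$ sit at the bottoms of the $m$ columns of $\PF'$, one per column. This follows because the first part of $\pides(\PF')$ is $m$ (it is at most $m$ by Remark~\ref{remark:2}, and there is a part equal to $m$), which forces $1,\dots,m$ to occur in increasing order in $\word(\PF')$, i.e.\ $\rank(1)>\dots>\rank(m)$; two of them cannot lie in the same column (moving up a column raises the rank by $m>0$), so they occupy the $m$ distinct columns, and a car $c\le m$ not at the bottom of its column would have a car $<c\le m$ below it there, contradicting that each column contains exactly one of $1,\dots,m$. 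Hence $\vstr^{-1}$ deletes the $m$ bottom north steps with their cars, lowers the column heights back to $(n_0,\dots,n_{m-1})$, and subtracts $m$ from the rest; one checks this lands in $\PFc_{m,n}$ with $\pides=m^{\alpha_m}\cdots1^{\alpha_1}$ and is a two-sided inverse to $\vstr$.

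\emph{Area and return.} Since the column heights of $\Pi(\vstr(\PF))$ are $(n_0+1,\dots,n_{m-1}+1)$, the $x$-th column of the coarea partition gains $m-1-x$ cells, so $|\la|$ grows by $\sum_{x=0}^{m-1}(m-1-x)=\binom{m}{2}$; this is exactly the increase in the number of cells of the ambient rectangle lying strictly above the diagonal (the width stays $m$ and the height grows by $m$), and since $\area=\bigl(\text{cells above the diagonal}\bigr)-|\la|$, Equation~(\ref{varea}) follows. For $\ret$: at the right end of column $j-1$, $\Pi(\vstr(\PF))$ is at height $g_j+j$ while its diagonal passes through $\tfrac{n+m}{m}j=\tfrac{n}{m}j+j$ there, so $\Pi(\vstr(\PF))$ touches its diagonal at column $j$ iff $\Pi(\PF)$ touches its own diagonal there; the set of touching columns is unchanged, hence Equation~(\ref{vret}).

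\emph{Dinv.} This is where I expect the real work to lie, as in \tref{comb1}. Using $\dinv=\tdinv+\dinvcorr$, I would prove
\[
\tdinv(\vstr(\PF))=\tdinv(\PF)+\binom{m}{2}\qquad\text{and}\qquad\dinvcorr(\Pi(\vstr(\PF)))=\dinvcorr(\Pi(\PF))-\binom{m}{2},
\]
whose sum is Equation~(\ref{vdinv}). For the first: every old car's rank rises by exactly $m$, so $\tdinv$ among the old cars is unchanged; no pair of new cars contributes (they are placed in decreasing rank along increasing label); and the new car of column $x$ creates exactly $x$ new $\tdinv$ pairs with old cars, one for each old north step whose old rank lies in the length-$m$ window immediately below the rank of $(x,g_x)$ --- a claim I would verify with the two-slope-line picture of \fref{tdinv123} --- giving total gain $0+1+\dots+(m-1)=\binom{m}{2}$. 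For the second: by the column-height description the $\binom{m}{2}$ new cells of $\la$ form a determined staircase below the columns of $\la(\PF)$, and I would compute, cell by cell, their contribution to the two indicator sums in the definition of $\dinvcorr$ using the slope-$\tfrac{n+m}{m}$ lines through north steps as in \fref{dinvchange}, obtaining net change $-\binom{m}{2}$. The subtle point, just as in \tref{comb1}, is to match the \emph{new} $\dinvcorr$ contributions against the \emph{new} $\tdinv$ contributions column by column, so that the cancellation is forced locally rather than only observed on totals. With Equation~(\ref{vdinv}) in hand the theorem is proved, and --- exactly as $\hstr$ yielded \tref{3}(a) --- the $(\area,\dinv,\ret)$-preserving bijection $\vstr$ immediately gives $[s_{m^{\alpha_m}\cdots 1^{\alpha_1}}]_{m,n}=[s_{m^{\alpha_m+1}\cdots 1^{\alpha_1}}]_{m,n+m}$, which upon iteration is \tref{3}(b).
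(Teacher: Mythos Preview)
Your overall scaffolding---invertibility, then $\area$ and $\ret$ by comparing coarea, then $\dinv=\tdinv+\dinvcorr$---is exactly the paper's. Your invertibility paragraph is more detailed than the paper's one-line ``the map is invertible'' and is correct; likewise for area and ret.

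The gap is in your quantitative plan for $\dinv$. It is \emph{not} true that the new car of column $x$ creates exactly $x$ new tdinv pairs, nor that $\tdinv$ and $\dinvcorr$ change by $\pm\binom{m}{2}$. Take $m=3$, $n=5$ and the path with column heights $(5,0,0)$ (cars $1,\dots,5$ all in column $0$, $\pides=1^5$, which is in the domain with $\alpha_3=\alpha_2=0$). In $\vstr(\PF)\in\PFc_{3,8}$ the three new cars sit at ranks $0,10,5$ in columns $0,1,2$, while the five old cars have ranks $3,6,9,12,15$. The only new--old tdinv pairs are $(1,7)$ and $(2,5)$, so $\tdinv$ rises by $2$, not $\binom{3}{2}=3$; correspondingly the new $\lambda=(2,1)$ has $\dinvcorr=-2$, not $-3$, while the old $\lambda$ is empty. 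The two changes still cancel, but not via the constant you propose; the count depends on the path (in particular on whether some original columns are empty).

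The paper never computes these totals. Its argument is precisely the local matching you flag at the end as ``the subtle point'': each extra tdinv pair between a new car and an old car is paired off with one extra negative $\dinvcorr$ contribution, so the increments cancel term by term. That matching \emph{is} the proof, not a refinement of a global count. Your plan becomes correct if you drop the $\binom{m}{2}$ claims and instead carry out that bijection explicitly; in doing so you will also have to track how the $\dinvcorr$ contributions of the \emph{old} cells of $\lambda$ behave (their legs grow and the slope changes from $n/m$ to $(n+m)/m$), a point the paper's sketch leaves implicit.
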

\begin{proof}
	The bijectivity follows from the invertibility of the map. Equations (\ref{varea}) and (\ref{vret}) are true for the same reason as Equations (\ref{harea}) and (\ref{hret}). The proof of Equation (\ref{vdinv}) is based on a similar idea to the proof of (\ref{hdinv}): the action of $\vstr$ changes each car $i$ in $\PF$ into $i+m$, and the rank is also increased by $m$, thus the temporary dinv of $\PF$ is equal to the temporary dinv of the cars $m+1,\ldots,m+n$ in $\vstr(\PF)$. Since the dinv correction is negative, we can match each tdinv between cars $1,2,\ldots,m$ and $m+1,\ldots,m+n$ with a new negative dinv correction, showing that the change of dinv is zero.
\end{proof}

\begin{result}
	$[s_{k1^{n-k}}]_{m,n}=[s_{k1^{m-k}}]_{n,m}$.
\end{result}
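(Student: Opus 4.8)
The plan is to stay on the combinatorial side, as in the proofs of \tref{comb1} and \tref{comb2}, and to realize the identity by an explicit (area, dinv, ret)-preserving bijection obtained by transposing the underlying Dyck path. The first task is to determine which $\PF\in\PFc_{m,n}$ contribute to $\scoeff{k1^{n-k}}_{m,n}$ in the expansion (\ref{Hikitas}). Using the straightening relation (\ref{straight}) together with the determinantal description of $s_\alpha$, one checks that among all compositions $\alpha\vDash n$ the only one with $s_\alpha=\pm s_{k1^{n-k}}$ is $\alpha=(k,1^{n-k})$ itself, occurring with sign $+1$: the exponent multiset of the hook is $\{k+n-1\}\cup(\{0,1,\dots,n-1\}\setminus\{k-1\})$, whose unique maximum $k+n-1$ must be the exponent of the first part (if its position were $>1$, then $\alpha_1$ would have exponent $\le n-1$, forcing $\alpha_1\le 0$), so $\alpha_1=k$; any later part that is $\ge 2$ then either repeats an exponent (giving $s_\alpha=0$) or produces an exponent exceeding $n-1$. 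Hence $\scoeff{k1^{n-k}}_{m,n}=\sum_{\PF}[\ret(\PF)]_{1/t}t^{\area(\PF)}q^{\dinv(\PF)}$, the sum running over $\PF\in\PFc_{m,n}$ with $\pides(\PF)=(k,1^{n-k})$ exactly.

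Next I would describe these parking functions structurally. Since $\pides(\PF)=(k,1^{n-k})$ is equivalent to $\ides(\PF)=\{k,k+1,\dots,n-1\}$, it is equivalent to the ``$V$-shaped'' rank condition $\rank(1)>\rank(2)>\cdots>\rank(k)<\rank(k+1)<\cdots<\rank(n)$, so car $k$ occupies the north step of minimum rank. Analyzing the column-increasing condition shows that such a $\PF$ is determined by its path $P=\Pi(\PF)\in\Dyck_{m,n}$ together with a choice of which $k-1$ north steps carry the cars $1,\dots,k-1$, and that these must be the bottom steps of $k-1$ distinct columns, none of them the column of the minimum-rank step. Thus the parking functions counted above biject with pairs $(P,S)$, where $P\in\Dyck_{m,n}$ and $S$ is a $(k-1)$-subset of the set of columns of $P$ that contain a north step, the minimum-rank column excluded. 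In such a pair $\area$ and $\ret$ depend only on $P$, and since $\dinv(\PF)=\tdinv(\PF)+\dinvcorr(P)$ with $\dinvcorr(P)=\pdinv(P)-\maxdinv(P)$, we get $\dinv(\PF)=\pdinv(P)-\mathrm{def}(P,S)$, where $\mathrm{def}(P,S):=\maxdinv(P)-\tdinv(\PF)\ge 0$ is the shortfall of $\tdinv$ below its maximum --- caused exactly by the rank-close pairs that the non-monotone labeling of the small steps inverts.

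The bijection itself sends $(P,S)$ to $(P^{t},S^{t})$, where $P^{t}\in\Dyck_{n,m}$ is the transpose of $P$ (obtained by reversing the step sequence of $P$ and exchanging $N\leftrightarrow E$; equivalently, $\lambda(P^{t})$ is the conjugate partition $\lambda(P)'$). Transposition sends cells above $P$ to cells above $P^{t}$ with $\arm\leftrightarrow\leg$, matches the cells below the path and the diagonal-touching points, and fixes the minimum-rank step; and $S^{t}$ is the image of $S$ under the induced correspondence between the columns of $P$ containing a north step and those of $P^{t}$ (which is well defined since a Dyck path starts with $N$ and ends with $E$, so its maximal $N$-runs and $E$-runs alternate and are equinumerous, the $E$-runs of $P$ being the $N$-runs of $P^{t}$). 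One checks that this is a bijection onto the pairs describing the parking functions of $\PFc_{n,m}$ with $\pides=(k,1^{m-k})$ --- it preserves $|S|+1=k$ while changing the ambient rectangle, hence the number of parts equal to $1$ in the hook, from $n-k$ to $m-k$ --- and that $\area$ and $\ret$ are preserved. What remains is the identity $\pdinv(P)-\mathrm{def}(P,S)=\pdinv(P^{t})-\mathrm{def}(P^{t},S^{t})$.

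This $\dinv$ identity is the part I expect to be the real obstacle. Its $k=1$ case (no small steps, $\mathrm{def}\equiv 0$) reads $\pdinv(P)=\pdinv(P^{t})$, the transpose symmetry underlying $\scoeff{1^n}_{m,n}=\scoeff{1^m}_{n,m}$ --- the known $m\leftrightarrow n$ symmetry of the rational $q,t$-Catalan numbers --- and even there one must handle the boundary cells with $\arm/(\leg+1)=m/n$ or $(\arm+1)/\leg=m/n$ by matching them against $\dinvcorr$, exactly as in the discussion following the definition of $\dinvcorr$. For $k\ge 2$ one must further show $\mathrm{def}(P,S)=\mathrm{def}(P^{t},S^{t})$, which means understanding how the rank function and the ``within $m$'' window transform under transposition and the column correspondence, and then matching, column by column in $S$, the inverted rank-close pairs on the two sides. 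I expect this bookkeeping --- similar in spirit to, but more delicate than, the $\tdinv$/$\dinvcorr$ interplay in the proofs of \tref{comb1} and \tref{comb2} --- to be the technical heart of the argument; it can be sanity-checked on small instances such as $\scoeff{21}_{2,3}=\scoeff{2}_{3,2}=1$.
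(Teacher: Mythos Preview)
Your plan is exactly the paper's approach: identify the contributing parking functions as those with $\pides=(k,1^{n-k})$, encode them as pairs $(P,S)$ with $P$ a Dyck path and $S$ a $(k-1)$-set of non-minimum-rank columns, transpose, and split $\dinv=\pdinv-\mathrm{def}$ so that the problem reduces to the transpose symmetry of $\pdinv$ together with $\mathrm{def}(P,S)=\mathrm{def}(P^t,S^t)$.

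The paper supplies the one ingredient you explicitly leave open, and it hinges on choosing the \emph{right} column correspondence --- which is not quite the one you describe. In the common picture of $P$, the bottom cell of column $i\ge 2$ of $P$ has the valley lattice point $v_i$ as its southwest corner; the same $v_i$ is the northeast corner of the last east step of the $(i-1)$st east-run of $P$, and that east step is the bottom cell of a column of $P^t$. This valley-point pairing \emph{preserves the rank} of the column-bottom in its respective grid (e.g.\ in the paper's $(5,8)$ example the paired bottoms both have rank $12$, resp.\ $19$), and since the lost tdinv pairs counted by $\mathrm{def}$ depend only on those ranks, $\mathrm{def}(P,S)=\mathrm{def}(P^t,S^t)$ follows. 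Your phrasing ``$N$-run $\leftrightarrow$ adjacent (following) $E$-run'' gives instead the map sending column $i$ of $P$ to column $j{+}1{-}i$ of $P^t$, which sends the minimum-rank column of $P$ to the \emph{last} column of $P^t$ and is not rank-preserving; with that choice the $\mathrm{def}$-matching would fail.
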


We shall prove the special case when $k=1$ first. That is, we first show $[s_{1^{n}}]_{m,n}=[s_{1^{m}}]_{n,m}$.
The bijection for this identity is that we can transpose the path of $\PF\in\PFc_{m,n}$ and fill the word $(m,m-1,\ldots,1)$ to get $\PF'\in\PFc_{n,m}$. 

It is obvious that $\PF'$ has the same $\area$ as $\PF$ since their underlying Dyck paths are transposes of each other. For the statistic dinv, recall that the tdinv of a parking function with word $(n,n-1,\ldots,1)$ is equal to the maxdinv of the path, thus
\begin{align}
\dinv(\PF)&=\tdinv(\PF)+\pdinv(\Pi(\PF))-\maxdinv(\PF)\nonumber\\
&= \pdinv(\Pi(\PF))\nonumber\\
&=\sum_{c\in\lambda(\Pi(\PF))}\chi\left(\frac{arm(c)}{leg(c)+1}\leq\frac{m}{n}<\frac{arm(c)+1}{leg(c)}\right).\label{transmn}
\end{align}
From the Equation (\ref{transmn}), we see that dinv is symmetric about $m$ and $n$, and preserved by the transpose action. Thus \fref{9} shows an example of this bijection.
\begin{figure}[ht!]
	\centering
	\begin{tikzpicture}[scale=0.6]
	\fillshade{1/1,2/1,2/2,3/2,3/3,4/3,7/1,7/2,8/2,8/3,9/3,9/4};
	\PFmnu{0,0}{4}{3}{0/1,0/2,1/3,0/0};
	\arrowx{4.2,1.5}{5.8,1.5};
	\PFmnu{6,0}{3}{4}{0/1,0/2,0/4,1/3,0/0};
	\end{tikzpicture}
	\caption{Bijection between $\PFc_{4,3}$ with $\pides\ 1^3$ and $\PFc_{3,4}$ with $\pides\ 1^4$.}
	\label{fig:9}	
\end{figure}
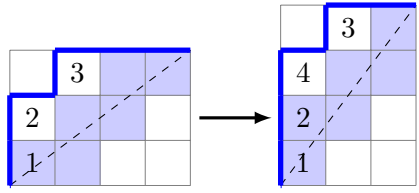

Then we consider the identity $[s_{k1^{n-k}}]_{m,n}=[s_{k1^{m-k}}]_{n,m}$.
This bijective proof is similar to that of $[s_{1^{n}}]_{m,n}=[s_{1^{m}}]_{n,m}$.

That is, given a parking function $\PF\in\PFc_{m,n}$ with pides ${k1^{n-k}}$, one transposes the path and labels the path to produce $\pides\ {k1^{m-k}}$. If there are only $k$ peaks (which means $k$ different columns) in the Dyck paths, then the filling of cars in both $(m,n)$ and $(n,m)$ cases are unique since the cars $1,\ldots,k$ must be filled in a rank-decreasing way at bottom of each column in the two parking functions, while the remaining cars should be filled in a rank-increasing way in the remaining north steps. One can check that they have the same $\area$ and $\dinv$ values. 

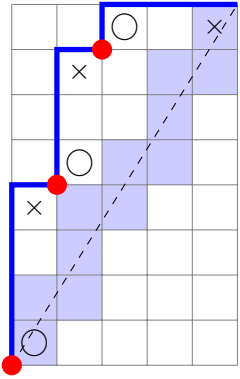
\begin{figure}[ht!]
	\centering
	\begin{tikzpicture}[scale=0.6]
	\fillshade{1/1,1/2,2/2,2/3,2/4,3/4,3/5,4/5,4/6,4/7,5/7,5/8};
	\draw[help lines] (0,0) grid (5,8);
	\draw[dashed] (0,0) -- (5,8);
	\draw[line width=2pt,blue] (0,0) --(0,4)--(1,4)--(1,7)--(2,7)--(2,8)-- (5,8);
	\fillcir{1/1,2/5,3/8};
	\fillcro{1/4,2/7,5/8};
	\fillpoint{0/0,1/4,2/7};
	\end{tikzpicture}
	\caption{A $(5,8)$-Dyck path with $3$ peaks.}
	\label{fig:dinvtrans}	
\end{figure}

Otherwise, in any rational $(m,n)$-Dyck path $P$ with $j>k$ peaks, the car $k$ must be in the first row since it has the smallest rank, and there are $\binom{j-1}{k-1}$ ways to choose columns for cars $1,\ldots,k-1$ in the north steps of both $P$ and its transpose $P'$, while the remaining cars should be filled in a rank-increasing way in the remaining north steps. We want to match the $\binom{j-1}{k-1}$ possible positions of cars $1,\ldots,k-1$ in both $(m,n)$ and $(n,m)$ cases by a similar idea.

We still use Definition \ref{definition:dinv1} as the definition of dinv, and the fact that a path $P$ and its transpose $P'$ have the same pdinv. For a parking function $\PF\in\PFc_{m,n}$ with pides ${k1^{n-k}}$ and a parking function $\PF'\in\PFc_{n,m}$ with pides ${k1^{m-k}}$, the component $(\tdinv(\PF)-\maxdinv(\PF))$ counts the missing dinv created by the first $k$ cars (since the cars greater than $k$ are filled in a way to generate maximum possible dinv).

Taking the $(5,8)$-Dyck path $P$ in \fref{dinvtrans} for an example. It has three peaks, and the three circles are the positions that the first $k$ cars can be filled in. Next we consider its transpose $P'$. We can use the same picture for the underlying Dyck path, and fill cars in its east steps. In this way, the three crosses are the positions that the first $k$ cars can be filled in. The rank of each cross is determined by the lattice point to the northeast of itself, and the rank of each circle is determined by the lattice point to the southwest of itself, and these lattice points are exactly the three valley points (including the start point) of the Dyck path, marked in the picture. Thus, each cross is paired with a circle by a certain valley point. 

For any parking function on $P$ with pides ${k1^{n-k}}$, we find the $k$ circled positions that contain the first $k$ cars, then we choose the corresponding $k$ crossed positions in the path $P'$. In this way of matching, the dinv component $(\tdinv(\PF)-\maxdinv(\PF))$ in two parking functions are the same, thus $[s_{k1^{n-k}}]_{m,n}=[s_{k1^{m-k}}]_{n,m}$ is proved.

\bigskip

Note that \tref{3} (c) is a result about hook-shaped Schur functions.
As we proved within this result, \tref{3} (c) implies the following corollary.
\begin{corollary}For all $m,n > 0$,
	$$[s_{1^n}]_{m,n}=[s_{1^m}]_{n,m}.$$
\end{corollary}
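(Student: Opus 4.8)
The plan is to obtain the corollary as the $k=1$ instance of \tref{3}(c). Setting $k=1$ in the identity $[s_{k1^{n-k}}]_{m,n}=[s_{k1^{m-k}}]_{n,m}$ collapses the hook $k1^{n-k}$ to the column $1^n$ and the hook $k1^{m-k}$ to $1^m$, so the statement becomes $[s_{1^n}]_{m,n}=[s_{1^m}]_{n,m}$ verbatim. Since \tref{3}(c) has already been established above --- and its $k=1$ subcase was in fact the first step of that proof --- nothing further is strictly required; the point of stating it separately is just to isolate the $m\leftrightarrow n$ symmetry of the rational $q,t$-Catalan numbers $[s_{1^n}]_{m,n}$.

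For a self-contained argument I would simply record the bijection in the degenerate case. A parking function $\PF\in\PFc_{m,n}$ with $\pides(\PF)=1^n$ has word $n(n-1)\cdots 21$, hence is determined by its underlying $(m,n)$-Dyck path $\Pi(\PF)$; map it to the parking function $\PF'\in\PFc_{n,m}$ supported on the transpose of $\Pi(\PF)$ with word $m(m-1)\cdots 21$. This is clearly a bijection onto $\{\PF'\in\PFc_{n,m}:\pides(\PF')=1^m\}$. Transposing a Dyck path preserves the number of cells below it, so $\area(\PF')=\area(\PF)$. Because a parking function with word $n\cdots 21$ attains $\tdinv=\maxdinv$ on its path, the definition of $\dinv$ gives $\dinv(\PF)=\pdinv(\Pi(\PF))=\sum_{c\in\lambda(\Pi(\PF))}\chi\!\left(\frac{\arm(c)}{\leg(c)+1}\le\frac{m}{n}<\frac{\arm(c)+1}{\leg(c)}\right)$, an expression that is manifestly symmetric in $m$ and $n$ and invariant under transposing the path, since transposition exchanges the arms and legs of the cells of $\lambda(\Pi(\PF))$; hence $\dinv(\PF')=\dinv(\PF)$. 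Summing $[\ret(\PF)]_{1/t}\,t^{\area(\PF)}q^{\dinv(\PF)}$ over the two sides then yields the equality of coefficients.

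There is essentially no obstacle here, as the result is a corollary in the literal sense. The only point requiring a line of care is the behaviour of $\ret$ in the non-coprime case: writing $(m,n)=(ka,kb)$ with $a,b$ coprime, $\Pi(\PF)$ passes through $(ia,ib)$ if and only if its transpose passes through $(ib,ia)$, so $\ret$ is preserved by the transpose and the weighted sums match term by term.
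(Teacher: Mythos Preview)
Your proposal is correct and follows exactly the paper's approach: the corollary is the $k=1$ case of \tref{3}(c), and the self-contained transpose bijection you describe (with the observation that $\tdinv=\maxdinv$ for word $n\cdots21$, so $\dinv=\pdinv$, which is symmetric under swapping $m\leftrightarrow n$ and transposing the path) is precisely the argument the paper gives when it treats the $k=1$ case first. Your explicit check that $\ret$ is preserved under transposition in the non-coprime case is a small addition the paper leaves implicit.
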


\section{Schur function expansions of the $(m,3)$ Case}

The Rational Shuffle Theorem when $n=3$ has a nice Schur function expansion, summarized in \tref{1}. For example, one can compute the Schur function expansion of $\Qmn{3k+1,3}(1)$ by Maple to get \taref{coeff}.

In this section, we give two proofs of \tref{1} by both 
working on the symmetric function side 
and the combinatorial side of the Rational Shuffle Theorem. Our proofs independently prove the Rational Shuffle Theorem and the Shuffle Theorem when $n\leq 3$.

\begin{table}[ht!]
	\centering
	\renewcommand{\arraystretch}{1.4}
	\caption{Coefficients of $s_{\lambda}$ in $\Qmn{3k+1,3}(1)$.}
	\vspace*{2mm}
	\begin{tabular}{|c|c|c|c|c|}
		\hline
		$k$ & {$\Qmn{3k+1,3}(1)$} & $\scoeff{3}_{3k+1,3}$ & $\scoeff{21}_{3k+1,3}$ & $\scoeff{111}_{3k+1,3}$ \\
		\hline
		$0$ & $\Qmn{1,3}(1)$ & $0$ & $0$  & $\sqt{\emptyset}$ \\\hline
		$1$ & $\Qmn{4,3}(1)$ & $\sqt{\emptyset}$ & $\sqt{1}+\sqt{2}$ & $\sqt{3}$\\
		&& &   & $+\sqt{1,1}$\\\hline
		$2$ & $\Qmn{7,3}(1)$ & $\sqt{3}$ & $\sqt{4}+\sqt{5}$ & $\sqt{6}$\\
		&&$+\sqt{1,1}$  & $+\sqt{2,1}+\sqt{3,1}$  & $+\sqt{4,1}$\\
		&&  &   & $+\sqt{2,2}$\\\hline
		$3$ & $\Qmn{10,3}(1)$ & $\sqt{6}$ & $\sqt{7}+\sqt{8}$ & $\sqt{9}$\\
		&&$+\sqt{4,1}$  & $+\sqt{5,1}+\sqt{6,1}$  & $+\sqt{7,1}$\\
		&& $+\sqt{2,2}$ &  $+\sqt{3,2}+\sqt{4,2}$ & $+\sqt{5,2}$\\
		&&  &   & $+\sqt{3,3}$\\\hline
		$4$ & $\Qmn{13,3}(1)$ & $\sqt{9}$ & $\sqt{10}+\sqt{11}$ & $\sqt{12}$\\
		&&$+\sqt{7,1}$  & $+\sqt{8,1}+\sqt{9,1}$  & $+\sqt{10,1}$\\
		&& $+\sqt{5,2}$ &  $+\sqt{6,2}+\sqt{7,2}$ & $+\sqt{8,2}$\\
		&& $+\sqt{3,3}$ & $+\sqt{4,3}+\sqt{5,3}$  & $+\sqt{6,3}$\\
		& &  &   & $+\sqt{4,4}$\\\hline
	\end{tabular}
	\label{table:coeff}
\end{table}

\subsection{Algebraic proof --- $\Qmn{m,3}(1)$}
We shall use Leven's method in \cite{Em} to prove the theorem by induction.
We use the following lemma about $(q,t)$-Schur functions to simplify our computation.
\begin{lemma}\label{lemma:2}Let $n,k\geq 0$ be two non-negative integers, we have
	\begin{equation}
	\sqt{n-1} \sqt{k-1} = \sqt{n+k-2} + \sqt{k-1,1} \sqt{n-2}.
	\end{equation}
\end{lemma}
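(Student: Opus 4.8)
The plan is to strip away the $(q,t)$-Schur notation and reduce \lref{2} to a single identity among the quantities $\qtn{j}=\tfrac{q^j-t^j}{q-t}$, which is then a routine (Laurent-)polynomial check.

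First I would apply the closed form $\sqt{(a,b)}=(qt)^b\qtn{a-b+1}$ (so that in particular $\sqt{(a)}=\qtn{a+1}$). This turns the five terms appearing in the lemma into $\sqt{n-1}=\qtn{n}$, $\sqt{k-1}=\qtn{k}$, $\sqt{n+k-2}=\qtn{n+k-1}$, $\sqt{n-2}=\qtn{n-1}$, and $\sqt{k-1,1}=qt\,\qtn{k-1}$. Hence the assertion of \lref{2} is exactly
\[
\qtn{n}\,\qtn{k}=\qtn{n+k-1}+qt\,\qtn{n-1}\,\qtn{k-1}.
\]

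To prove this I would use the two one-step recursions $\qtn{m}=q^{m-1}+t\,\qtn{m-1}$ and $\qtn{m}=t^{m-1}+q\,\qtn{m-1}$, both immediate from the closed form. Expanding the $\qtn{n}$ factor with the first and the $\qtn{k}$ factor with the second gives
\[
\qtn{n}\,\qtn{k}=q^{n-1}t^{k-1}+q^{n}\,\qtn{k-1}+t^{k}\,\qtn{n-1}+qt\,\qtn{n-1}\,\qtn{k-1},
\]
so it remains only to recognize $q^{n-1}t^{k-1}+q^{n}\,\qtn{k-1}+t^{k}\,\qtn{n-1}$ as $\qtn{n+k-1}$; this follows from the splitting identity $\qtn{a+b}=q^{a}\qtn{b}+t^{b}\qtn{a}$ applied with $(a,b)=(n-1,k)$, followed by one more use of $\qtn{k}=t^{k-1}+q\,\qtn{k-1}$. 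Alternatively, multiplying through by $(q-t)^2$ collapses both sides to $q^{n+k}-q^{n}t^{k}-q^{k}t^{n}+t^{n+k}$, which settles it in one line.

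There is no genuine obstacle here; the only point worth a remark is that for small $n$ or $k$ the index $(k-1,1)$ is not a partition and some of the $\qtn{j}$ carry nonpositive $j$. This causes no trouble: the formula $\sqt{(a,b)}=(qt)^b\qtn{a-b+1}$ and the displayed $\qtn{\cdot}$-identity remain valid for all integer arguments once $\qtn{j}$ is read as the rational expression $\tfrac{q^j-t^j}{q-t}$ (so that $\qtn{0}=0$), and neither the recursion argument nor the denominator-clearing argument uses positivity of the indices. Thus the single computation above covers all $n,k\geq 0$ at once.
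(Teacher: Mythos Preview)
Your proof is correct. After converting each $\sqt{\lambda}$ to the closed form $(qt)^b\qtn{a-b+1}$, the lemma does reduce to $\qtn{n}\,\qtn{k}=\qtn{n+k-1}+qt\,\qtn{n-1}\,\qtn{k-1}$, and your two verifications (the recursion expansion and the one-line clearing of denominators) are both valid. Your remark about extending $\qtn{j}$ to nonpositive $j$ is also fine, though in the applications the indices are large enough that this is not needed.

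Your route, however, is different from the paper's. The paper does not unwind to $\qtn{j}$; instead it computes the discrepancy directly as a Schur expansion,
\[
\sqt{n-1}\sqt{k-1}-\sqt{n+k-2}-\sqt{k-1,1}\sqt{n-2}\;=\;\sum_{i=0}^{\min(k-2,n-3)}\sqt{(n+k-i-4,\,i+1,\,1)},
\]
and then invokes the fact (stated earlier in the paper) that a $(q,t)$-Schur function indexed by a partition with more than two parts vanishes, since $s_\lambda(q,t)=0$ whenever $\ell(\lambda)\ge 3$. In other words, the paper appeals to a Pieri-type cancellation structural to Schur polynomials in two variables, while you give a purely computational verification in the $\qtn{\cdot}$ arithmetic. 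Your argument is more elementary and entirely self-contained; the paper's is shorter but relies on recognizing the residual terms as three-row Schur functions.
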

\begin{proof}
\begin{align*}
&\sqt{n-1} \sqt{k-1} - (\sqt{n+k-2} + \sqt{k-1,1} \sqt{n-2}) \\=& \sum_{i=0}^{\min(k-2,n-3)}\sqt{(n+k-i-4,i+1,1)}=0.
\hspace*{3cm}\qedhere
\end{align*}
\end{proof}

Since $\nabla a = a$ for any constant $a$, \lref{1} and \lref{newlemma} allow us to write a recursion for $\Qmn{m,3}$ operator that
\begin{equation}\label{algproof}
\Qmn{m+3,3}(1) =\nabla \Qmn{m,3}\nabla^{-1} (1)=\nabla \Qmn{m,3} (1).
\end{equation}
Using the recursion, we can prove \tref{1} by inducting on $m$. We shall give the complete algebraic proof of Equation (\ref{maineqn}) in \tref{1}, and omit the algebraic proof of Equations (\ref{maineqn2}) and (\ref{maineqn3}), only listing the base cases that
\begin{align}
\Qmn{2,3}(1)&=s_{21}+\sqt{1}s_{111},\\
\Qmn{3,3}(1)&=s_3+(\sqt{2}+2\sqt{1}+1)s_{21}+(\sqt{11}+\sqt{1}+\sqt{2}+\sqt{3}).
\end{align}

\bigskip
\begin{proof}[Proof of Equation (\ref{maineqn})]
When $k=0$, we can obtain by direct computation that
\begin{equation}
\Qmn{1,3}(1)=s_{111},
\end{equation}
which satisfies Equation (\ref{maineqn}). Then we induct on $k$ to prove Equation (\ref{maineqn}) that suppose the Schur function coefficients of $\Qmn{3k+1,3}(1)$ are the following:
\begin{align}
\scoeff{3}_{3k+1,3}&=\sum_{i=0}^{k-1}\sqt{(k+2i-1,k-i-1)},\\ 
\scoeff{21}_{3k+1,3}&=\sum_{i=0}^{k-1}\left(
\sqt{(k+2i,k-i-1)}+\sqt{(k+2i+1,k-i-1)}
\right),\\
\scoeff{111}_{3k+1,3}&=\sum_{i=0}^{k}\sqt{(k+2i,k-i)},
\end{align}
we want to show that 
\begin{align}
\label{alg1}\scoeff{3}_{3k+1,3}&=\sum_{i=0}^{k}\sqt{(k+2i-1,k-i-1)},\\ 
\label{alg2}\scoeff{21}_{3k+1,3}&=\sum_{i=0}^{k}\left(
\sqt{(k+2i,k-i-1)}+\sqt{(k+2i+1,k-i-1)}
\right),\\
\label{alg3}\scoeff{111}_{3k+1,3}&=\sum_{i=0}^{k+1}\sqt{(k+2i,k-i)},
\end{align}
One can directly compute that 
\begin{align}
\nabla s_3&= \sqt{2,2}s_{21}+\sqt{3,2}s_{111},\\
\nabla s_{21}&= \sqt{2,1}s_{21}-\sqt{3,1}s_{111},\\
\nabla s_{111}&= s_3+(\sqt{1}+\sqt{2})s_{21}+(\sqt{11}+\sqt{3})s_{111}.
\end{align}
By Equation (\ref{algproof}), we have
\begin{align*}
	\Qmn{3(k+1)+1,3}(1)=&\scoeff{3}_{3k+4,3}s_3+\scoeff{21}_{3k+4,3}s_{21}+\scoeff{111}_{3k+4,3}s_{111}\nonumber\\
	=&\nabla \Qmn{3k+1,3}(1)\nonumber\\
	=& \nabla (\scoeff{3}_{3k+1,3}s_3+\scoeff{21}_{3k+1,3}s_{21}+\scoeff{111}_{3k+1,3}s_{111})\nonumber\\
	=& \scoeff{3}_{3k+1,3}\nabla s_3+\scoeff{21}_{3k+1,3}\nabla s_{21}+\scoeff{111}_{3k+1,3}\nabla s_{111}\nonumber\\
	=& \scoeff{111}_{3k+1,3}s_3\nonumber\\
	&+\left(\sqt{2,2}\scoeff{3}_{3k+1,3}-\sqt{21}\scoeff{21}_{3k+1,3}+(\sqt{1}+\sqt{2})\scoeff{111}_{3k+1,3}\right)s_{21}\nonumber\\
	&+\left(\sqt{3,2}\scoeff{3}_{3k+1,3}-\sqt{31}\scoeff{21}_{3k+1,3}+(\sqt{11}+\sqt{3})\scoeff{111}_{3k+1,3}\right)s_{111},
\end{align*}
which implies that
\begin{align}
\scoeff{3}_{3k+4,3}&=\scoeff{111}_{3k+1,3},\\
\scoeff{21}_{3k+4,3}&=\sqt{2,2}\scoeff{3}_{3k+1,3}-\sqt{21}\scoeff{21}_{3k+1,3}+(\sqt{1}+\sqt{2})\scoeff{111}_{3k+1,3},\\ \scoeff{111}_{3k+4,3}&=\sqt{3,2}\scoeff{3}_{3k+1,3}-\sqt{31}\scoeff{21}_{3k+1,3}+(\sqt{11}+\sqt{3})\scoeff{111}_{3k+1,3}.
\end{align}
By the recursions above, one can verify Equations (\ref{alg1}), (\ref{alg2}) and (\ref{alg3}) using \lref{2}.
\end{proof}

\subsection{Combinatorial side --- $\Hikita_{m,3}[X; q, t]$}

Now we consider the Hikita polynomial defined by Equation (\ref{Hikitas}).
Any parking function $\PF\in\PFc_{m,3}$ has $3$ rows, thus  only  has $3$ cars: $\{1,2,3\}$, and the word $\sigma(\PF)$ can be any permutation $\sigma\in\mathcal{S}_3$. \taref{1} shows the $s_{\pides}$ contribution of the $6$ permutations in $\mathcal{S}_3$.
\begin{table}[ht!]
	\centering
	\caption{$s_{\pides}$ contribution of permutations in $\mathcal{S}_3$.}
	\begin{tabular}{|c|C{1.4cm}|C{1.4cm}|C{1.4cm}|C{1.4cm}|C{1.4cm}|C{1.4cm}|}
		\hline
		$\sigma\in\mathcal{S}_3$ & $123$ & $132$ & $213$ & $231$ & $312$ & $321$\\\hline
		$s_{\pides}$ & $s_{3}$ & $s_{21}$ & $s_{12}=0$ & $s_{21}$ & $s_{12}=0$ & $s_{111}$\\\hline
	\end{tabular}
	\label{table:1}
\end{table}

By our notation, $\Hikita_{m,3}[X; q, t]=[s_{3}]_{m,3}s_3+[s_{21}]_{m,3}s_{21}+[s_{111}]_{m,3}s_{111}$. We can work out the combinatorial side 
of the Rational Shuffle Theorem in the case where $n=3$ using (\ref{Hikitas}).

\subsubsection{Combinatorics of $\Hikita_{3k+1,3}[X; q, t]$}

We show the combinatorics of $\Hikita_{3k+1,3}[X; q, t]$ by enumerating the parking functions on $(3k+1)\times3$ lattice to prove the following formulas for the coefficients of Schur functions in $\Hikita_{3k+1,3}[X; q, t]$ (in $q,t$-analogue notation):
\begin{align}
\scoeff{3}_{3k+1,3}&=\sum_{i=0}^{k-1}(qt)^{k-1-i}\qtn{3i+1},\\ \scoeff{21}_{3k+1,3}&=\sum_{i=0}^{k-1}(qt)^{k-1-i}(\qtn{3i+2}+\qtn{3i+3}),\\
\scoeff{111}_{3k+1,3}&=\sum_{i=0}^{k}(qt)^{k-1-i}\qtn{3i+1}.
\end{align}

Given a parking function $\PF\in\PFc_{3k+1,3}$, we let $\Pi=\Pi(\PF)$ be the path of $\pi$. Its dinv correction is non-negative since $3k+1>3$ for $k\geq 1$, and
\begin{equation}
\dinvcorr(\PF) = \sum_{c\in\lambda(\Pi)}\chi\left( \frac{arm(c)+1}{leg(c)+1}\leq\frac{m}{n}<\frac{arm(c)}{leg(c)} \right).
\end{equation}

The partition corresponding to the Dyck path $\Pi$ has at most $2$ parts, so $\leg(c)$ of a cell $c\in\lambda(\Pi)$ is either $0$ or $1$. Taking \fref{10} for reference, we have 
\begin{enumerate}[(a)]
	\item $c\in\lambda(\Pi)$ with $\leg(c)=0$ and $1\leq\arm(c)<k$ contributes $1$ to dinv correction, marked $\bigcirc$ in \fref{10},
	\item $c\in\lambda(\Pi)$ with $\leg(c)=1$ and $k<\arm(c)\leq 2k-1$ contributes $1$ to dinv correction, marked $\bigtriangleup$ in \fref{10}.
\end{enumerate}

\begin{figure}[ht!]
	\centering
	\begin{tikzpicture}[scale=0.6]
	\Dpath{0,0}{13}{3}{0,4,8,-1};
	\fillcir{5/3,6/3,7/3,1/2,2/2,3/2};
	\filltri{1/3,2/3,3/3};
	\fillcro{4/2,4/3,8/3};
	\end{tikzpicture}
	\caption{The dinv correction of a $(3k+1,3)$-Dyck path when $k=4$.}
	\label{fig:10}
\end{figure}

Further, we can directly count the statistics area and dinv correction (dinvcorr) from the partition $\lambda(\Pi)$. 
We write $\lambda=(\lambda_1,\lambda_2)=\lambda(\Pi)$, then  $\lambda\subseteq\lambda_0=(2k,k)$, i.e.\ $\lambda_1\leq 2k$ and $\lambda_2\leq k$. Clearly, the $\area$ of $\Pi$ is counted by $|\lambda_0|-|\lambda|$, i.e.\ 
\begin{equation}\label{areaf}
\area(\Pi)=3k-\lambda_1-\lambda_2.
\end{equation}

We can also write the formula for dinv correction according to the partition $\lambda$:
\begin{equation}\label{dinvf}
\dinvcorr(\Pi) = \begin{cases} 
\lambda_1-1 & \textrm{if } \lambda_2=0 \textrm{ and } \lambda_1\leq k,\\
k-1 & \textrm{if } \lambda_2=0 \textrm{ and } \lambda_1> k,\\ 
\lambda_1-1 & \textrm{if } \lambda_2=\lambda_1\geq1,\\
\lambda_1-2 & \textrm{if } \lambda_2\geq1, 1\leq\lambda_1-\lambda_2\leq k,\textrm{ and } \lambda_1\leq k,\\
2\lambda_1-k-3 & \textrm{if } \lambda_2\geq1, 1\leq\lambda_1-\lambda_2\leq k,\textrm{ and } \lambda_1\geq k+1,\\
2\lambda_2+k-2 & \textrm{if } \lambda_2\geq1 \textrm{ and } \lambda_1-\lambda_2\geq k+1.\\
\end{cases}
\end{equation}

Note that the return statistic is always 1 since $3k+1$ and $3$ are coprime. We shall compute $[s_{3}]_{3k+1,3}$ first. 

From \taref{1}, we see that only the parking functions in $\PFc_{3k+1,3}$ with word $123$ contribute to the coefficient of $s_{3}$. We also notice that the $3$ cars should be in different columns, otherwise there are cars $i<j$ with $\rank(i)<\rank(j)$, contradicting with the restriction that the 
word of the parking function is $123$. Thus we have one $\PF\in\PFc_{3k+1,3}$ with word $123$ on each $(3k+1,3)$ Dyck path which has no consecutive north steps. 

Let $\lambda(\PF)=(\lambda_1,\lambda_2)$ be the partition associated to the Dyck path $\Pi(\PF)$ (see \fref{4}), then $\area(\PF)$ is counted by Equation (\ref{areaf}). Since the ranks of cars $1,2,3$ are decreasing, there is always no {tdinv}, thus $\dinv(\PF)=\dinvcorr(\Pi)$, which is counted by the latter 3 cases (since $\la_1>\la_2>0$) of Equation (\ref{dinvf}).
\begin{figure}[ht!]
	\centering	
	\begin{tikzpicture}[scale=0.6]
	\fillshade{1/1,2/1,3/1,3/2,4/2,5/2,5/3,6/3,7/3};
	\PFmnu{0,0}{7}{3}{0/3,2/2,3/1,0/0};
	\draw[fill=green!30!white] (0,2) rectangle (3,3);
	\draw[fill=green!30!white] (0,1) rectangle (2,2);
	\node at (1.5,2.5) {$\lambda_1$};
	\node at (1,1.5) {$\lambda_2$};
	\node at (-1.5,2) {$\lambda(\PF)$};
	\end{tikzpicture}
	\caption{Example: a parking function $\PF\in\PFc_{7,3}$ with word $123$.}
	\label{fig:4}
\end{figure}

For $[s_{3}]_{3k+1,3}=\sum_{i=0}^{k-1}(qt)^{k-1-i}\qtn{3i+1}$, we construct each term $(qt)^{k-1-i}\qtn{3i+1}$ as a sequence of parking functions. Since each parking function corresponds to a unique partition $\lambda\subset(2k,k)$ with 2 distinct parts, we shall use partitions to represent parking functions in $\PFc_{3k+1,3}$ with diagonal word $123$. For each $i$, we define the following $3$ branches of partitions (parking functions with word 123):
\begin{align*}
	\Lambda_1&=\{(k+i+1,k),(k+i,k-1),\ldots,(k+2,k-i+1)\},\\
	\Lambda_2&=\{(2k,i),(2k-1,i-1),\ldots,(2k+1-i,1)\},\\
	\Lambda_3&=\{(k+1,k-i),(k,i+1),\ldots,(k-i+1,k-i)\}.
\end{align*}
\begin{figure}[h]
	\centering	
	\resizebox{\columnwidth}{!}{
		\begin{tikzpicture}[scale=0.42]
		\partitiontwo{0}{0}{11}{k+i+1}{6}{k};
		\partitiontwo{0}{-4}{10}{k+i}{5}{k-1};
		\partitiontwo{0}{-8}{9}{k+i+1-r}{4}{k-r};
		\partitiontwo{0}{-12}{8}{k+2}{3}{k-i+1};
		\partitiontwo{0}{-16}{7}{k+1}{2}{k-i};
		
		\partitiontwo{13}{0}{12}{2k}{4}{i};
		\partitiontwo{13}{-4}{11}{2k-1}{3}{i-1};
		\partitiontwo{13}{-8}{10}{2k-r}{2}{i-r};
		\partitiontwo{13}{-12}{9}{2k+1-i}{1}{1};
		
		\partitiontwo{8}{-16}{6}{k}{2}{k-i};
		\partitiontwo{15}{-16}{5}{k-1}{2}{k-i};
		\partitiontwo{22}{-16}{3}{k-i+1}{2}{k-i};
		
		\arrowx{9.5,-1.5}{12.5,-1.5}
		\arrowx{9.5,-5.5}{12.5,-5.5}
		\arrowx{9.5,-9}{12.5,-9}
		\arrowx{9,-13}{12,-13}
		
		\arrowx{12.5,-2}{10,-3.5}
		\arrowx{12.5,-6}{11.5,-6.5}
		\arrowx{10,-7.5}{9,-8}
		\dotdot{11,-6.5}{11,-7}{11,-7.5};
		\arrowx{12.5,-9.5}{11,-10}
		\arrowx{9.5,-11}{8,-11.5}
		\dotdot{10.25,-10.5}{10.25,-11}{10.25,-11.5};
		\arrowx{12.5,-13.5}{7,-15.5}
		
		\arrowx{4.5,-17.5}{7.5,-17.5}
		\arrowx{11.5,-17.5}{14.5,-17.5}
		\arrowx{18,-17.5}{20,-17.5}
		\dotdot{20.5,-17.5}{21,-17.5}{21.5,-17.5}
		
		\fille{0}{-1}{q^{3i}\cdot(qt)^{k-1-i}}
		\fille{0}{-5}{q^{3i-2}t^2\cdot(qt)^{k-1-i}}
		\fille{0}{-9}{q^{3i-2r}t^{2r}\cdot(qt)^{k-1-i}}
		\fille{0}{-13}{q^{i+2}t^{2i-2}\cdot(qt)^{k-1-i}}
		\fille{0}{-17}{q^{i}t^{2i}\cdot(qt)^{k-1-i}}
		
		\fillw{25}{-1}{q^{3i-1}t\cdot(qt)^{k-1-i}}
		\fillw{24}{-5}{q^{3i-3}t^3\cdot(qt)^{k-1-i}}
		\fillw{23}{-9}{q^{3i-2r-1}t^{2r+1}\cdot(qt)^{k-1-i}}
		\fillw{22}{-13}{q^{i+1}t^{2i-1}\cdot(qt)^{k-1-i}}
		
		\fillls{12}{-15.5}{q^{i-1}t^{2i+1}\cdot(qt)^{k-1-i}}
		\fillls{19}{-15.5}{q^{i-2}t^{2i+2}\cdot(qt)^{k-1-i}}
		\fillw{25}{-17}{t^{3i}\cdot(qt)^{k-1-i}}
		\draw [thick, blue,decorate,decoration={brace,amplitude=5pt,mirror},xshift=0.4pt,yshift=-0.4pt](-7.5,0) -- (-7.5,-14) node[blue,midway,xshift=-.5cm] {$\Lambda_1$};
		\draw [thick, blue,decorate,decoration={brace,amplitude=5pt,mirror},xshift=0.4pt,yshift=-0.4pt](32,-14) -- (32,0) node[blue,midway,xshift=.5cm] {$\Lambda_2$};
		\draw [thick, blue,decorate,decoration={brace,amplitude=5pt,mirror},xshift=0.4pt,yshift=-0.4pt](-5.5,-18.5) -- (30,-18.5) node[blue,midway,yshift=-.5cm] {$\Lambda_3$};
		\end{tikzpicture}
	}
	\caption{The construction of $(qt)^{k-1-i}\qtn{3i+1}$.}
	\label{fig:5}
\end{figure}
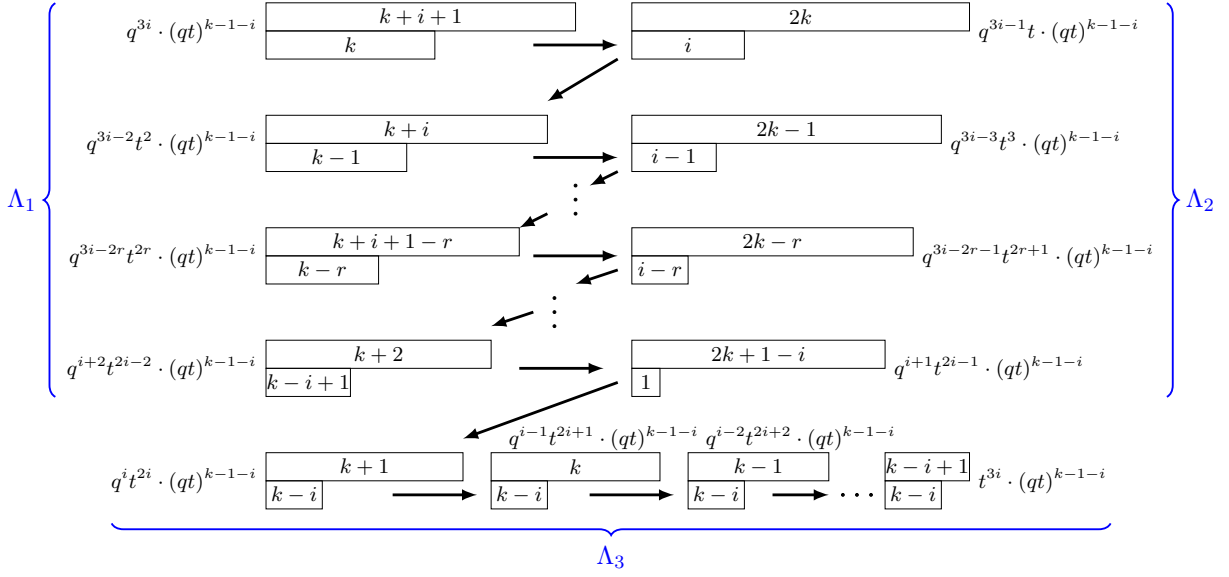

The branch $\Lambda_1$ contains all the partitions $\lambda$ such that $\lambda_1-\lambda_2=i+1\leq k$ with $\lambda_2> k-i$, 
the branch $\Lambda_2$ contains all the partitions $\lambda$ such that $\lambda_1-\lambda_2=2k-i>k$, and
the branch $\Lambda_3$ contains all the partitions $\lambda$ such that $\lambda_2=i+1$ and $\lambda_1-\lambda_2\leq k-i$. Notice that $|\Lambda_1|=|\Lambda_2|$. As shown in \fref{5}, the {construction} begins with \textit{alternatively} taking partitions from $\Lambda_1$ and $\Lambda_2$, {ending with} the {last partition of $\Lambda_2$}. Then {continue} the chain by taking partitions in $\Lambda_3$ and end the chain with the last partition $(k-i+1,k-i)$ in $\Lambda_3$. The weights of the parking functions are $(qt)^{k-1-i}q^{3i},(qt)^{k-1-i}q^{3i-1}t,\ldots,(qt)^{k-1-i}t^{3i}$ following the order of the chain.

To be more precise, it is not difficult to check that each parking function with diagonal word 123 is contained in $\Lambda_1\cup \Lambda_2\cup\Lambda_3$ for some $i$, and the parking function weights are
\begin{align}
\sum_{\PF\in\Lambda_1}t^{\area(\PF)}q^{\dinv(\PF)}&=(qt)^{k-i-1}q^{i+2}\qtnn{i},\\
\sum_{\PF\in\Lambda_2}t^{\area(\PF)}q^{\dinv(\PF)}&=(qt)^{k-i-1}q^{i+1}t\qtnn{i},\quad\mbox{and}\\
\sum_{\PF\in\Lambda_3}t^{\area(\PF)}q^{\dinv(\PF)}&=(qt)^{k-i-1}t^{2i}\qtn{i+1},
\end{align}
which sum up to $(qt)^{k-1-i}\qtn{3i+1}$. This proves that $[s_{3}]_{3k+1,3}=\sum_{i=0}^{k-1}(qt)^{k-1-i}\qtn{3i+1}$.
\fref{6} shows an example of the combinatorial construction of the coefficient $[s_{3}]_{10,3}$.

\begin{figure}[ht!]
	\centering	
	\resizebox{\columnwidth}{!}{\begin{tikzpicture}[scale=.25]
		\PFmnum{0,0}{10}{3}{0/3,3/2,6/1,0/0};
		\PFmnum{0,-4}{10}{3}{0/3,2/2,5/1,0/0};
		\PFmnum{0,-8}{10}{3}{0/3,1/2,4/1,0/0};
		\PFmnum{0,-12}{10}{3}{0/3,1/2,2/1,0/0};
		
		\PFmnum{11,0}{10}{3}{0/3,2/1,6/2,0/0};
		\PFmnum{11,-4}{10}{3}{0/3,1/1,5/2,0/0};
		\PFmnum{11,-8}{10}{3}{0/3,1/2,3/1,0/0};
		\draw [thick, decorate,decoration={brace,amplitude=5pt,mirror},xshift=0.4pt,yshift=-0.4pt](-.5,3) -- (-.5,-12) node[midway,xshift=-.6cm] {$\qtn{7}$};
		\path (23,0);
		\end{tikzpicture}
		\begin{tikzpicture}[scale=.25]
		\PFmnum{0,0}{10}{3}{0/3,3/2,5/1,0/0};
		\PFmnum{0,-4}{10}{3}{0/3,2/2,4/1,0/0};
		\PFmnum{11,0}{10}{3}{0/3,1/1,6/2,0/0};
		\PFmnum{11,-4}{10}{3}{0/3,2/2,3/1,0/0};
		\draw [thick, decorate,decoration={brace,amplitude=5pt,mirror},xshift=0.4pt,yshift=-0.4pt](21.5,-4) -- (21.5,3) node[midway,xshift=.9cm] {$(qt)\qtn{4}$};
		
		\PFmnum{0,-10}{10}{3}{0/3,3/2,4/1,0/0};
		\draw [thick, decorate,decoration={brace,amplitude=5pt,mirror},xshift=0.4pt,yshift=-0.4pt](10.5,-10) -- (10.5,-7) node[midway,xshift=1cm] {$(qt)^2\qtn{1}$};
		\path (0,-11);
		\end{tikzpicture}}
	\caption{The construction of $[s_{3}]_{10,3}=\qtn{7}+(qt)\qtn{4}+(qt)^2\qtn{1}$.}
	\label{fig:6}	
\end{figure}
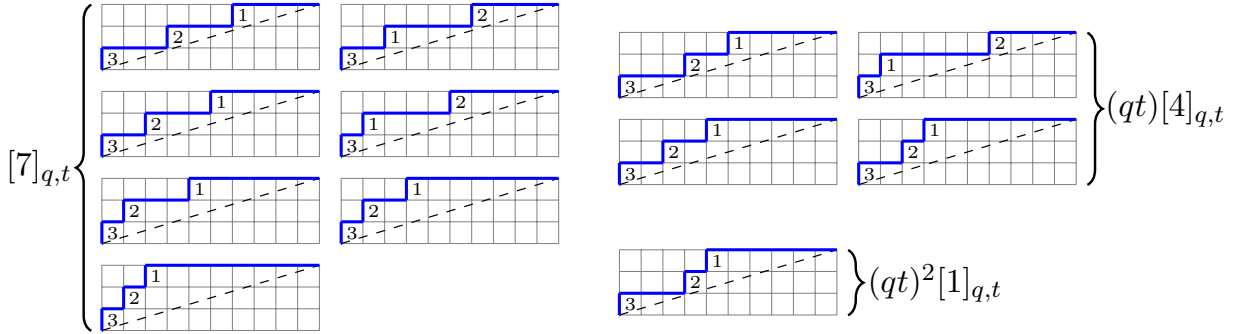

We can combinatorially prove  $[s_{21}]_{3k+1,3}=\sum_{i=0}^{k-1}(qt)^{k-1-i}(\qtn{3i+2}+\qtn{3i+3})$ in a similar way. In this case, we have $2$ possible diagonal words: $132$ and $312$. In both cases, the car 2 has the smallest rank, which means the label of the first (lowest) row must be 2. Thus, the pair of cars $(1,2)$ does not produce a tdinv. If we let $\lambda=(\lambda_1,\lambda_2)$ be the partition corresponding to the path $\Pi$, and let the labels of row 1, row 2, row 3 (counting from bottom to top) be $\ell_1,\ell_2,\ell_3$, then we have the following formula for temporary dinv:
\begin{equation}\label{tdinv}
\tdinv(\PF) = \begin{cases} 
\chi(\ell_3>\ell_2) & \textrm{if } \lambda_2=0 \textrm{ and } \lambda_1\leq k,\\
\chi(\ell_3>\ell_1)+\chi(\ell_2>\ell_3) & \textrm{if } \lambda_2=0 \textrm{ and } \lambda_1> k,\\ 
\chi(\ell_2>\ell_1) & \textrm{if } \lambda_2=\lambda_1\geq1,\\
\chi(\ell_2>\ell_1)+\chi(\ell_3>\ell_2) & \textrm{if } \lambda_2\geq1, 1\leq\lambda_1-\lambda_2\leq k,\ \lambda_1\leq k,\\
\chi(\ell_2>\ell_1)+\chi(\ell_3>\ell_1)+\chi(\ell_3>\ell_2) & \textrm{if } \lambda_2\geq1, 1\leq\lambda_1-\lambda_2\leq k,\ \lambda_1\geq k+1,\\
\chi(\ell_2>\ell_1)+\chi(\ell_3>\ell_1)+\chi(\ell_2>\ell_3) & \textrm{if } \lambda_2\geq1 \textrm{ and } \lambda_1-\lambda_2\geq k+1.\\
\end{cases}
\end{equation}

In the construction of the coefficient $\scoeff{21}_{3k+1,3} = \sum_{i=0}^{k-1}(qt)^{k-1-i} (\qtn{3i+2}+\qtn{3i+3})$, we construct each term $(qt)^{k-1-i}\qtn{3i+2}$ or $(qt)^{k-1-i}\qtn{3i+3}$ as a sequence of parking functions. First, we define the following 3 branches of parking functions to obtain the term $(qt)^{k-1-i}\qtn{3i+3}$:
\begin{align*}
	\Lambda_1&=\{\PF:\lambda(\PF)\in\{(2k,i+1),(2k-1,i),\ldots,(2k-i,1)\}, (\ell_1,\ell_2,\ell_3)=(2,1,3)\},\\
	\Lambda_2&=\{\PF:\lambda(\PF)\in\{(2k,i),(2k-1,i-1),\ldots,(2k-i,0)\}, (\ell_1,\ell_2,\ell_3)=(2,3,1)\},\\
	\Lambda_3&=\{\PF:\lambda(\PF)\in\{(k,k-i-1),\ldots,(k-i,k-i-1)\}, (\ell_1,\ell_2,\ell_3)=(2,3,1)\}.
\end{align*}

With the 3 branches defined, the {construction} is similar to that of $(qt)^{k-1-i}\qtn{3i+1}$ as a component of $[s_{3}]_{3k+1,3}$. We \textit{alternatively} take parking functions from $\Lambda_1$ and $\Lambda_2$, {ending with} the {last partition of $\Lambda_2$}. Then we {continue} the chain by taking partitions in $\Lambda_3$ ending with the last parking function corresponding to the partition $(k-i,k-i-1)$ with labels $(\ell_1,\ell_2,\ell_3)=(2,3,1)$ in $\Lambda_3$. The weights of the parking functions are $(qt)^{k-1-i}q^{3i+2},\ldots,(qt)^{k-1-i}t^{3i+2}$, which sum up to $(qt)^{k-1-i}\qtn{3i+3}$.

Second, we define another three branches of parking functions for $(qt)^{k-1-i}\qtn{3i+2}$:
\begin{align*}
	\Lambda_4&=\{\PF:\lambda(\PF)\in\{(k+i+1,k),(k+i,k-1),\ldots,(k+1,k-i)\}, (\ell_1,\ell_2,\ell_3)=(2,3,1)\},\\
	\Lambda_5&=\{\PF:\lambda(\PF)\in\{(k+i,k),(k+i-1,k-1),\ldots,(k,k-i)\}, (\ell_1,\ell_2,\ell_3)=(2,1,3)\},\\
	\Lambda_6&=\{\PF:\lambda(\PF)\in\{(k-1,k-i),\ldots,(k-i,k-i)\}, (\ell_1,\ell_2,\ell_3)=(2,1,3)\}.
\end{align*}

The construction is the same as that of $(qt)^{k-1-i}\qtn{3i+3}$, and the weights of the parking functions are $(qt)^{k-1-i}q^{3i+1},\ldots,(qt)^{k-1-i}t^{3i+1}$ which sum up to $(qt)^{k-1-i}\qtn{3i+2}$.

Thus we have proved that  $\scoeff{21}_{3k+1,3} = \sum_{i=0}^{k-1}(qt)^{k-1-i} (\qtn{3i+2}+\qtn{3i+3})$. \fref{firsts21} shows an example of the combinatorial construction of the coefficient $[s_{21}]_{7,3}$.
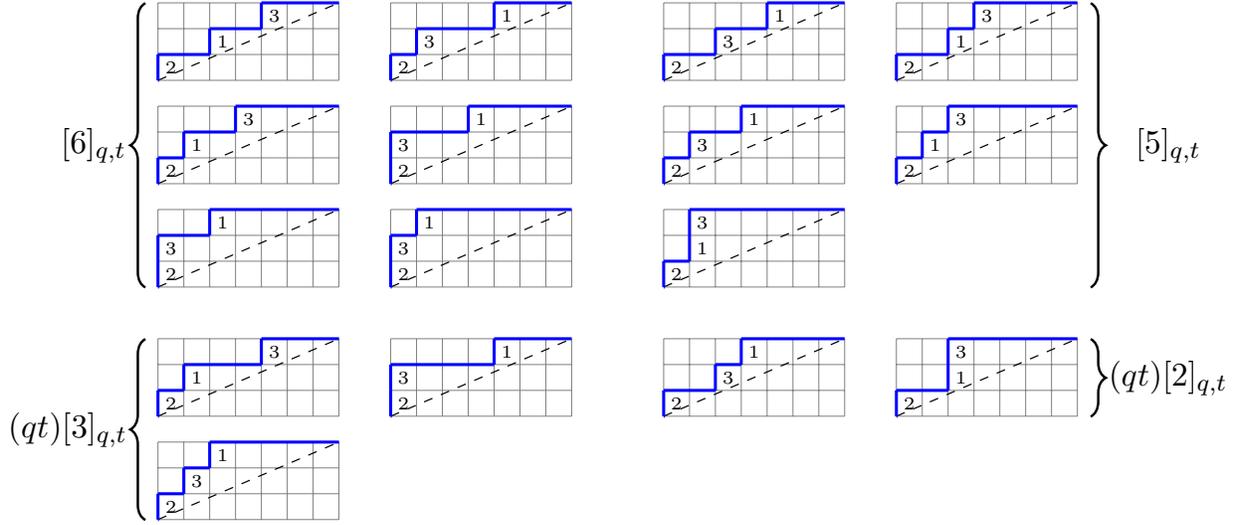
\begin{figure}[ht!]
	\centering
	\resizebox{\columnwidth}{!}{\begin{tikzpicture}[scale=.3]
		\PFmnum{0,0}{7}{3}{0/2,2/1,4/3,0/0};
		\PFmnum{0,-4}{7}{3}{0/2,1/1,3/3,0/0};
		\PFmnum{0,-8}{7}{3}{0/2,0/3,2/1,0/0};
		
		\PFmnum{9,0}{7}{3}{0/2,1/3,4/1,0/0};
		\PFmnum{9,-4}{7}{3}{0/2,0/3,3/1,0/0};
		\PFmnum{9,-8}{7}{3}{0/2,0/3,1/1,0/0};
		\draw [thick, decorate,decoration={brace,amplitude=5pt,mirror},xshift=0.4pt,yshift=-0.4pt](-.5,3) -- (-.5,-8) node[midway,xshift=-.6cm] {$\qtn{6}$};
		
		\PFmnum{0,-13}{7}{3}{0/2,1/1,4/3,0/0};
		\PFmnum{0,-17}{7}{3}{0/2,1/3,2/1,0/0};
		\PFmnum{9,-13}{7}{3}{0/2,0/3,4/1,0/0};
		\draw [thick, decorate,decoration={brace,amplitude=5pt,mirror},xshift=0.4pt,yshift=-0.4pt](-.5,-10) -- (-.5,-17) node[midway,xshift=-.9cm] {$(qt)\qtn{3}$};
		
		\path (19,-18);
		\end{tikzpicture}
		\begin{tikzpicture}[scale=.3]
		\PFmnum{0,0}{7}{3}{0/2,2/3,4/1,0/0};
		\PFmnum{0,-4}{7}{3}{0/2,1/3,3/1,0/0};
		\PFmnum{0,-8}{7}{3}{0/2,1/1,1/3,0/0};
		
		\PFmnum{9,0}{7}{3}{0/2,2/1,3/3,0/0};
		\PFmnum{9,-4}{7}{3}{0/2,1/1,2/3,0/0};
		\draw [thick, decorate,decoration={brace,amplitude=5pt,mirror},xshift=0.4pt,yshift=-0.4pt](16.5,-8) -- (16.5,3) node[midway,xshift=.9cm] {$\qtn{5}$};
		
		\PFmnum{0,-13}{7}{3}{0/2,2/3,3/1,0/0};	
		\PFmnum{9,-13}{7}{3}{0/2,2/1,2/3,0/0};
		\draw [thick, decorate,decoration={brace,amplitude=5pt,mirror},xshift=0.4pt,yshift=-0.4pt](16.5,-13) -- (16.5,-10) node[midway,xshift=.9cm] {$(qt)\qtn{2}$};
		\path (0,-18);
		\end{tikzpicture}}
	\caption{The construction of $[s_{21}]_{7,3}=\qtn{6}+\qtn{5}+(qt)(\qtn{3}+\qtn{2})$.}
	\label{fig:firsts21}	
\end{figure}

The identity that $[s_{111}]_{3k+1,3}=\sum_{i=0}^{k}(qt)^{k-i}\qtn{3i+1}=[s_3]_{3k+4,3}$ is a consequence of the following corollary of \tref{3} (a):
\begin{corollary}\label{corm3}
	For any $m>0$, $[s_{111}]_{m,3}=[s_3]_{m+3,3}$. 
\end{corollary}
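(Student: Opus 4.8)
The plan is to observe that Corollary~\ref{corm3} is nothing but the specialization $n=3$ of \tref{3}(a): substituting $n=3$ into $[s_{1^n}]_{m,n}=[s_n]_{m+n,n}$ gives $[s_{1^3}]_{m,3}=[s_3]_{m+3,3}$, and since $1^3$ is the partition $111$ this is exactly the claim. So once \tref{3}(a) is in hand there is nothing more to do; the corollary simply records this reformulation for use in the $(m,3)$ analysis.

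If one prefers a derivation carried out entirely within the $n=3$ setting, I would proceed as follows. First, I would read off from \taref{1} that among the six permutations of $\{1,2,3\}$ only $321$ contributes to the coefficient of $s_{111}$, and only $123$ contributes to the coefficient of $s_3$; this reduces the claim to
\[
\sum_{\substack{\PF\in\PFc_{m,3}\\ \word(\PF)=321}}[\ret(\PF)]_{\frac{1}{t}}\,t^{\area(\PF)}q^{\dinv(\PF)}
=\sum_{\substack{\PF\in\PFc_{m+3,3}\\ \word(\PF)=123}}[\ret(\PF)]_{\frac{1}{t}}\,t^{\area(\PF)}q^{\dinv(\PF)}.
\]
Second, I would invoke the horizontal stretch bijection $\hstr$ of \tref{comb1}: for $n=3$ the two words $n\cdots21$ and $12\cdots n$ appearing there are exactly $321$ and $123$, so $\hstr$ is a bijection between the two index sets above, and since it preserves $\area$, $\dinv$, and $\ret$, applying it term by term yields the displayed identity.

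As a consistency check in the coprime case $m=3k+1$, where $\ret\equiv1$, one can instead compare explicit formulas: $[s_3]_{3k'+1,3}=\sum_{i=0}^{k'-1}(qt)^{k'-1-i}\qtn{3i+1}$ with $k'=k+1$ specializes to $\sum_{i=0}^{k}(qt)^{k-i}\qtn{3i+1}$, which is exactly the value claimed for $[s_{111}]_{3k+1,3}$ (and also equals $[s_3]_{3k+4,3}$). There is no real obstacle here: the substantive content is entirely subsumed by \tref{comb1}, and the only new step is the bookkeeping from \taref{1} identifying which diagonal words produce $s_{111}$ and $s_3$.
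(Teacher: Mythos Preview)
Your proposal is correct and matches the paper's treatment exactly: the paper states Corollary~\ref{corm3} as an immediate consequence of \tref{3}(a) (specializing $n=3$), without further argument. Your optional self-contained derivation via the horizontal stretch bijection $\hstr$ of \tref{comb1} is also fine, since \tref{comb1} is precisely what the paper uses to establish \tref{3}(a).
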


\subsubsection{Combinatorics of $\Hikita_{3k+2,3}[X; q, t]$}

We study the combinatorics of $\Hikita_{3k+2,3}[X; q, t]$ in a similar manner by enumerating the parking functions on the $(3k+2)\times3$ lattice to prove the following formulas for the coefficients of Schur functions in $\Hikita_{3k+2,3}[X; q, t]$ (in $q,t$-analogue notation):
\begin{align}
\scoeff{3}_{3k+2,3}&=\sum_{i=0}^{k-1}(qt)^{k-1-i}\qtn{3i+2},\\ \scoeff{21}_{3k+2,3}&=\sum_{i=-1}^{k-1}(qt)^{k-1-i}(\qtn{3i+3}+\qtn{3i+4}), \mbox{ \ \ and}\\
\scoeff{111}_{3k+2,3}&=\sum_{i=0}^{k}(qt)^{k-i}\qtn{3i+2}.
\end{align}

Given a parking function $\PF\in\PFc_{3k+2,3}$ with $\Pi(\PF)=\Pi$, we can compute the dinv correction of $\pi$ by examining the  cells $c\in\lambda(\Pi)$. Taking \fref{divcorr3k2} for reference,
\begin{enumerate}[(a)]
	\item $c\in\lambda(\Pi)$ with $\leg(c)=0$ and $1\leq\arm(c)< k$ contributes $1$ to dinv correction, marked $\bigcirc$ in \fref{divcorr3k2},
	\item $c\in\lambda(\Pi)$ with $\leg(c)=1$ and $k<\arm(c)\leq 2k$ contributes $1$ to dinv correction, marked $\bigtriangleup$ in \fref{divcorr3k2}.
\end{enumerate}

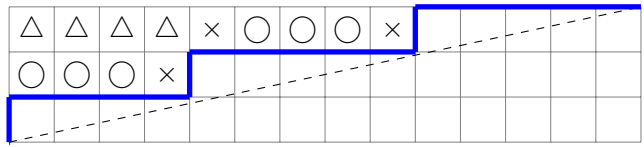
\begin{figure}[ht!]
	\centering
	\begin{tikzpicture}[scale=0.6]
	\Dpath{0,0}{14}{3}{0,4,9,-1};
	\fillcir{6/3,7/3,8/3,1/2,2/2,3/2};
	\filltri{1/3,2/3,3/3,4/3};
	\fillcro{4/2,5/3,9/3};
	\end{tikzpicture}
	\caption{The dinv correction of a $(3k+2,3)$-Dyck path when $k=4$.}
	\label{fig:divcorr3k2}
\end{figure}

Further, we can directly count the statistics area and dinv correction (dinvcorr) from the partition $\lambda(\Pi)=(\la_1,\la_2)\subseteq (2k+1,k)$ of a $(3k+2,3)$-Dyck path $\Pi$. Similar to Equation (\ref{areaf}),  we have
\begin{equation}\label{areaff}
\area(\Pi)=3k+1-\lambda_1-\lambda_2.
\end{equation}
The dinv correction formula is the same as Equation (\ref{dinvf}), and the return is still always 1. 

To prove $\scoeff{3}_{3k+2,3}=\sum_{i=0}^{k-1}(qt)^{k-1-i}\qtn{3i+2}$, we shall construct the following $3$ branches of partitions (parking functions with word 123) for each term $(qt)^{k-1-i}\qtn{3i+2}$:
\begin{align*}
	\Lambda_1&=\{(2k+1,i+1),(2k,i),\ldots,(2k+1-i,1)\},\\
	\Lambda_2&=\{(k+i+1,k),(k+i,k-1),\ldots,(k+1,k-i)\},\\
	\Lambda_3&=\{(k,k-i),(k-1,i+1),\ldots,(k-i+1,k-i)\}.
\end{align*}
Then, we can follow the same construction as the $(3k+1,3)$ case to obtain all parking functions with word $123$ and their weights $(qt)^{k-1-i}q^{3i+1},\ldots,(qt)^{k-1-i}t^{3i+1}$.

Similarly, to prove $\scoeff{21}_{3k+2,3}=\sum_{i=-1}^{k-1}(qt)^{k-1-i}(\qtn{3i+3}+\qtn{3i+4})$, we have $6$ branches of parking functions as follows:
\begin{align*}
	\Lambda_1&=\{\PF:\lambda(\PF)\in\{(k+i+2,k),\ldots,(k+2,k-i)\}, (\ell_1,\ell_2,\ell_3)=(2,3,1)\},\\
	\Lambda_2&=\{\PF:\lambda(\PF)\in\{(k+i+1,k),\ldots,(k+1,k-i)\}, (\ell_1,\ell_2,\ell_3)=(2,1,3)\},\\
	\Lambda_3&=\{\PF:\lambda(\PF)\in\{(k+1,k-i-1),\ldots,(k-i,k-i-1)\}, (\ell_1,\ell_2,\ell_3)=(2,3,1)\},\\
	\Lambda_4&=\{\PF:\lambda(\PF)\in\{(2k+1,i+1),\ldots,(2k-i+1,1)\}, (\ell_1,\ell_2,\ell_3)=(2,1,3)\},\\
	\Lambda_5&=\{\PF:\lambda(\PF)\in\{(2k+1,i),\ldots,(2k-i+1,0)\}, (\ell_1,\ell_2,\ell_3)=(2,3,1)\},\\
	\Lambda_6&=\{\PF:\lambda(\PF)\in\{(k,k-i),\ldots,(k-i,k-i)\}, (\ell_1,\ell_2,\ell_3)=(2,1,3)\}.
\end{align*}
Then, the total weight of parking functions in the first $3$ branches is $(qt)^{k-1-i}\qtn{3i+4}$, and the total weight of parking functions in the last $3$ branches is $(qt)^{k-1-i}\qtn{3i+3}$.

The proof of $\scoeff{111}_{3k+2,3}=\scoeff{3}_{3(k+1)+2,3}=\sum_{i=0}^{k}(qt)^{k-i}\qtn{3i+2}$ follows from Corollary \ref{corm3}.

\subsubsection{Combinatorics of $\Hikita_{3k,3}[X; q, t]$}

Notice that the area and dinv of parking functions in $\PFc_{3k,3}$ are equal to those of the parking functions in $\PFc_{3k+1,3}$. Given a parking function $\PF\in\PFc_{3k+1,3}$ where $\lambda(\PF)=\lambda=(\lambda_1,\lambda_2)$, the return statistic of $\PF$ is formulated as
\begin{equation*}
\ret(\PF)=2\chi(\lambda_1=2k)+\chi(\lambda_2=k)-2\chi(\lambda_1=2k)\cdot\chi(\lambda_2=k).
\end{equation*}
By the Extended Rational Shuffle Theorem in the non-coprime case, 
\begin{align}
\Hikita_{3k,3}[X; q, t]&=\sum_{\PF\in\PFc_{3k,3}} [\ret(\PF)]_{\frac{1}{t}}t^{\area(\PF)}q^{\dinv(\PF)}F_{\pides(\PF)}[X]\nonumber\\
&=\sum_{\PF\in\PFc_{3k+1,3}} [\ret(\PF)]_{\frac{1}{t}}t^{\area(\PF)}q^{\dinv(\PF)}s_{\pides(\PF)}.
\end{align}

To prove the following formulas for the coefficients of Schur functions in $\Hikita_{3k+1,3}[X; q, t]$,
\begin{align}
\label{3k1}\scoeff{3}_{3k,3}=&\sum_{i=0}^{k-1}(qt)^{k-1-i}(\qtn{3i-1}+\qtn{3i}+\qtn{3i+1}),\\ 
\label{3k2}\scoeff{21}_{3k,3}=&(qt)^{k-1}(\qtn{3}+2\qtn{2}+\qtn{1})\nonumber\\
&+\sum_{i=1}^{k-1}(qt)^{k-1-i}(\qtn{3i}+2\qtn{3i+1}+2\qtn{3i+2}+\qtn{3i+3}),\\
\label{3k3}\scoeff{111}_{3k,3}=&\sum_{i=0}^{k}(qt)^{k-i}(\qtn{3i-1}+\qtn{3i}+\qtn{3i+1}),
\end{align}
we use the constructions of $\scoeff{3}_{3k+1,3},\scoeff{21}_{3k+1,3},\scoeff{111}_{3k+1,3}$ and modify the weight of parking functions with nonzero returns. We use the same sets of partitions $\Lambda_1,\Lambda_2,\Lambda_3,\Lambda_4,\Lambda_5,\Lambda_6$ as Section 4.2.1.

For $\scoeff{3}_{3k,3}$, the first parking function in each set $\Lambda_1$ and $\Lambda_2$ has return statistic 1, except that the first parking function in $\Lambda_1$ when $i=k-1$ has return 2. All the remaining parking functions have return 3. Then we prove Equation (\ref{3k1}) by summing up the parking function weights.

For $\scoeff{21}_{3k,3}$, the first parking function in each of the sets $\Lambda_1,\Lambda_2,\Lambda_4,\Lambda_5$ has return statistic 1 since the second parts of these partitions are $k$, except that the first parking function in $\Lambda_1$ or $\Lambda_4$ when $i=k-1$ has return 2. All the remaining parking functions have return 3. Then again we obtain Equation (\ref{3k2}) by direct computation.

The proof of $\scoeff{111}_{3k,3}=\scoeff{3}_{3(k+1),3}$ follows from Corollary \ref{corm3}.

\section{Combinatorial results about Schur function expansions of the $(3,n)$ case}
\subsection{Recursive formula for $\scoeff{\lambda}_{3,n}$}

In $(3,n)$ case, we have $n$ cars, i.e.\ the word of a $(3,n)$ parking function is a permutation of $[n]$. By Remark \ref{remark:2}, $[s_{\lambda}]_{3,n} \neq 0$ implies that 
$\lambda$ must be of the form $3^a2^b1^c$ with $3a+2b+c=n$, i.e.\ $[s_{\lambda}]_{3,n} \neq 0$ only if the partition $\lambda$ only has parts of sizes less than or equal to $3$. 

We have the following corollary of \tref{3} summarizing several symmetries about $[s_{\lambda}]_{3,n}$.

\begin{corollary}\label{theorem:2} For all $n > 0$ and $a,b,c\geq 0$,
	\begin{enumerate}[(a)]
		\item $[s_{3^{a}2^b1^c}]_{3,n}=[s_{2^b1^c}]_{3,n-3a}$,
		\item $[s_{1^n}]_{3,n}=[s_{111}]_{n,3}$,
		\item $[s_{21^{n-2}}]_{3,n}=[s_{21}]_{n,3}$.
	\end{enumerate}
\end{corollary}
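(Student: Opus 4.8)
The plan is to derive all three parts directly from Theorem~\ref{theorem:3} by specializing the parameters, so essentially no new combinatorial work is needed. For part~(a), I would apply Theorem~\ref{theorem:3}(b) with $m=3$ and $\lambda'$ the partition obtained from $3^a2^b1^c$ by deleting the $a$ parts equal to $3$; that is, $\lambda' = 2^b1^c \vdash (n-3a)$. Then Theorem~\ref{theorem:3}(b) reads $[s_{3^a\lambda'}]_{3,n} = [s_{\lambda'}]_{3,n-3a}$, which is exactly $[s_{3^a2^b1^c}]_{3,n} = [s_{2^b1^c}]_{3,n-3a}$. One should note that the hypothesis ``$\lambda'\vdash(n-am)$'' of Theorem~\ref{theorem:3}(b) is satisfied since $3a+2b+c=n$ forces $2b+c = n-3a$; and if $n-3a<0$ or $3^a2^b1^c$ is not actually a partition of $n$ both sides are zero by Remark~\ref{remark:2}, so the identity holds trivially in the degenerate cases as well.

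For part~(b), I would start from part~(a) (or directly from Theorem~\ref{theorem:3}(b)): taking $a=b=0$, $c=n$ in part~(a) is vacuous, so instead I use Theorem~\ref{theorem:3}(a), which states $[s_{1^n}]_{m,n} = [s_n]_{m+n,n}$. Setting $m=n-3$ (valid once $n\ge 3$; for $n<3$ the partition $1^n$ and the statement must be checked separately, but both sides vanish or reduce to the trivial case $[s_1]_{1,1}$-type computations) gives $[s_{1^n}]_{n-3,n}=[s_n]_{2n-3,n}$, which is not yet the claim. A cleaner route: combine Theorem~\ref{theorem:3}(c) with $k=1$, which gives $[s_{1^n}]_{m,n}=[s_{1^m}]_{n,m}$ (this is precisely the Corollary stated after the proof of Theorem~\ref{theorem:3}(c) in the excerpt), and then apply it with $m=3$ to get $[s_{1^n}]_{3,n}=[s_{1^3}]_{n,3}=[s_{111}]_{n,3}$.

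For part~(c), I would apply Theorem~\ref{theorem:3}(c) with the hook parameter $k=2$, $m=3$: this yields $[s_{21^{n-2}}]_{3,n} = [s_{21^{3-2}}]_{n,3} = [s_{21}]_{n,3}$, which is the claim. The only point to verify is that the hook $21^{n-2}$ is a legitimate index for which $[s_{21^{n-2}}]_{3,n}$ can be nonzero, i.e.\ that $21^{n-2}\vdash n$, which holds for all $n\ge 2$; for $n<2$ both sides are zero.

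The main (and only) obstacle is bookkeeping of degenerate ranges of the parameters — making sure that when a purported partition is not actually a partition of the relevant integer, or when an index like $n-3a$ goes negative, both sides of each identity are zero by Remark~\ref{remark:2} — and confirming that the specializations $m=3$, resp.\ $k=1,2$, are literally instances of the hypotheses of Theorem~\ref{theorem:3}. Since Theorem~\ref{theorem:3} is already proved in Section~3, no further combinatorial argument (no new bijection or generating-function manipulation) is required: the corollary is a pure specialization.
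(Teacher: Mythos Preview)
Your proposal is correct and matches the paper's approach: the paper presents this result simply as ``the following corollary of Theorem~\ref{theorem:3}'' with no further argument, and your derivations --- part~(a) from Theorem~\ref{theorem:3}(b) with $m=3$, part~(b) from Theorem~\ref{theorem:3}(c) with $k=1$ (equivalently the stated Corollary $[s_{1^n}]_{m,n}=[s_{1^m}]_{n,m}$), and part~(c) from Theorem~\ref{theorem:3}(c) with $k=2$ --- are exactly the intended specializations. The initial detour through Theorem~\ref{theorem:3}(a) in your discussion of part~(b) is unnecessary (and you correctly abandon it), but the final argument is right.
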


Further, 
we have found the straightening action in parking functions combinatorially from parking functions with $\pides$ $\{\cdots, 1, 3, \cdots\}$ to parking functions with  $\pides$ $\{\cdots, 2,2, \cdots\}$, which is an \textit{involution} $\Phi$ whose fixed points are the coefficients of $[s_{2^a1^b}]_{3,n}$. 
We call the fixed points of $\Phi$ the \emph{fixed parking functions}.
The details of the involution $\Phi$  will be given in Section 5.2.1. 

Let $a,b$ be positive integers. We have conjectured a bijection $\swi$ between the fixed parking functions with $\pides\ 2^a1^b$ and the fixed parking functions with $\pides\ 2^b1^a$ in the coprime case, mapping the $2$ cars (or $1$ car) causing part $2$ (or $1$) in $\pides\ 2^a1^b$ to $1$ car (or $2$ cars) causing part $1$ (or $2$) in $\pides\ 2^b1^a$. The map $\swi$ has nice properties summarized in \tref{areasw}, \tref{areasw1} and \cref{2a1bn} in Section 5.2.2, and they imply the coprime case of another important symmetry:
\begin{conjecture}\label{conjecture:2a1b}For all $a,b,n \geq 0$,
	$$[s_{2^a1^b}]_{3,n}=[s_{2^b1^a}]_{3,3(a+b)-n}.
	$$
\end{conjecture}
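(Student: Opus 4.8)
The plan is to prove \cref{2a1b} combinatorially through the switch map $\swi$, treating the coprime and non-coprime cases separately. A useful preliminary remark: writing $n=2a+b$ (both sides of \cref{2a1b} vanish unless $2a+b=n$, since the two relevant Schur functions otherwise fail to have the degree of $\Qmn{3,n}(1)$), we have $3(a+b)-n=a+2b=|2^b1^a|$ and, because $2a+b\equiv-(a+2b)\pmod 3$, also $\gcd(3,n)=\gcd(3,3(a+b)-n)$. Thus $2^b1^a$ is precisely the complement of $2^a1^b$ inside the $3\times(a+b)$ rectangle, and the coprime case $3\nmid n$ and the non-coprime case $n=3k$ decouple, each relating two grids of the same arithmetic type.

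First I would reduce, exactly as in Section 5.2.1, to a weighted count of fixed parking functions. By the Extended Rational Shuffle Theorem (\tref{GLWX}), the Garsia--Remmel straightening theorem (\tref{main}), and the expansion (\ref{Hikitas}), the coefficient $\scoeff{2^a1^b}_{3,n}$ is obtained from the $s_{\pides}$-expansion of $\Hikita_{3,n}[X;q,t]$ by repeatedly applying the straightening relation (\ref{straight}). The involution $\Phi$ realizes these straightenings on parking functions as a sign-reversing, weight-preserving (i.e.\ $(\area,\dinv,\ret)$-preserving) pairing of the non-fixed parking functions, so that
\[
\scoeff{2^a1^b}_{3,n}=\sum_{\PF}[\ret(\PF)]_{\frac{1}{t}}\,t^{\area(\PF)}q^{\dinv(\PF)},
\]
the sum being over fixed parking functions $\PF\in\PFc_{3,n}$ with $\pides(\PF)=2^a1^b$, and likewise for $\scoeff{2^b1^a}_{3,3(a+b)-n}$. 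It then suffices to match these two weighted sets.

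The heart of the argument is to make the switch map $\swi$ precise and to prove its properties. Informally, $\swi$ locates the pair of consecutive cars $\{i,i+1\}$ responsible for each part $2$ of $\pides(\PF)$, and the single car responsible for each part $1$, and then converts every $2$-block into a $1$-block and every $1$-block into a $2$-block, simultaneously rebuilding the underlying Dyck path (stretching or compressing the affected rows) so that the output lies in $\PFc_{3,3(a+b)-n}$. I would establish: (i) \emph{well-definedness} --- the reconstructed path is a genuine Dyck path in the target grid, the induced labelling is a legal parking function, and its $\pides$ is exactly the partition $2^b1^a$, so the image is again fixed by $\Phi$; (ii) \emph{bijectivity} --- $\swi$ admits a canonical inverse, which is the content of \cref{2a1bn}; and (iii) \emph{statistic control} --- $\swi$ transforms $(\area,\dinv)$ as recorded in \tref{areasw} and \tref{areasw1}, so that $t^{\area}q^{\dinv}$ is preserved. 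Granting (i)--(iii), in the coprime case $[\ret]_{1/t}\equiv 1$ and $\swi$ is an $(\area,\dinv)$-preserving bijection between the two sets displayed above, which settles the coprime case of \cref{2a1b}.

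For the non-coprime case $n=3k$ the factor $[\ret(\PF)]_{1/t}$ is genuinely present (with $\ret\in\{1,2,3\}$) and $\swi$ as currently conjectured is only defined in the coprime situation, so the remaining work is to extend the switch construction to $\PFc_{3,3k}$ and to track how it transforms $\ret$; ideally one shows the whole weight $[\ret]_{1/t}\,t^{\area}q^{\dinv}$ is preserved, possibly via a compensating shift of $\area$ on the parking functions with $\ret=2$ or $\ret=3$. I expect the main obstacle to be precisely the bijectivity claim \cref{2a1bn} --- showing that the stretching/compressing prescription never leaves the set of Dyck paths, that the fixed condition is preserved in both directions, and that the map is canonically invertible, all of which is delicate because the switch acts non-locally on the path --- together with this $\ret$-bookkeeping in the non-coprime case. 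As a fallback when the bijective route stalls, one can verify \cref{2a1b} algebraically for small $a,b$, or try to derive it by induction from the conjectured recursion for $\scoeff{\lambda}_{3,n}$ of Section 5.1, though that would first require proving that recursion.
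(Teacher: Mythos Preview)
Your proposal is not a proof but a restatement of the paper's own strategy, and the paper itself leaves \cref{2a1b} open. The paper defines the switch map $\swi$, proves that $\swi(\PF)$ is again a parking function (\tref{feas}) and that $\area$ is preserved (\tref{areasw}, \tref{areasw1}), and then records as \cref{2a1bn} exactly the two steps you say you ``would establish'': (a) $\dinv(\PF)=\dinv(\swi(\PF))$ and (b) in the coprime case, $\Phi$-fixed parking functions map to $\Phi$-fixed parking functions with $\pides=2^b1^a$. \tref{imply} is precisely the observation that (b) (together with the invertibility of $\swi$ and the proved area preservation and the conjectured dinv preservation) yields the coprime case of \cref{2a1b}. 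So your outline and the paper's outline coincide, and neither contains a proof.

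Two points of detail. First, you cite \tref{areasw} and \tref{areasw1} for ``$(\area,\dinv)$'' control, but those theorems only handle $\area$; the $\dinv$ statement is part (a) of \cref{2a1bn} and is unproved. Second, you describe \cref{2a1bn} as ``the bijectivity claim'', but bijectivity of $\swi$ on its full domain is essentially free from invertibility (see the proof of \tref{imply}); the genuine content of \cref{2a1bn}(b) is that the image of a $\Phi$-fixed parking function is again $\Phi$-fixed and has the correct $\pides$. Finally, the paper explicitly states that \cref{2a1bn}(b) \emph{fails} in the non-coprime case, so your proposed extension of $\swi$ to $n=3k$ with a $\ret$-bookkeeping correction cannot work as written: a different construction, not merely a refined analysis of $\swi$, would be required there.
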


The results above 
show that the problem of computing 
the Schur function expansion of $\Qmn{3,n}(1)$ can be reduced to the problem of 
finding the coefficients of Schur functions of the form $s_{2^a1^b}$ where $a<b$. 
Finally, we conjecture a recursive formula for such coefficients $[s_{2^a1^b}]_{3,n}$ where $a<b$.
\begin{conjecture}
	Let $a<b$, then
	$$[s_{2^a1^b}]_{3,n}=(qt)[s_{2^a1^{b-3}}]_{3,n-3} + \sum_{i=0}^{a}\qtn{b+i}.$$
\end{conjecture}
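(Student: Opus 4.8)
The plan is to prove the recursion combinatorially, on the Hikita side. Since $[s_{2^a1^b}]_{3,n}$ is nonzero only when $n=2a+b$, I fix $n=2a+b$; the claimed identity then relates the pair $(a,b)$ to $(a,b-3)$, with the convention that the first term vanishes when $b<3$. By the Extended Rational Shuffle Theorem (\tref{GLWX}) and the straightened expansion~(\ref{Hikitas}) of $\Hikita_{3,n}[X;q,t]$, the coefficient $[s_{2^a1^b}]_{3,n}$ is a signed sum of $[\ret(\PF)]_{1/t}\,t^{\area(\PF)}q^{\dinv(\PF)}$ over parking functions $\PF\in\PFc_{3,n}$ whose $\pides$ straightens to $2^a1^b$. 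Applying the sign-reversing involution $\Phi$ of Section~5.2.1 cancels everything except the \emph{fixed} parking functions, so the first step is to establish the clean formula
\[
[s_{2^a1^b}]_{3,n}\;=\;\sum_{\substack{\PF\in\PFc_{3,n}\ \mathrm{fixed}\\ \pides(\PF)=2^a1^b}}[\ret(\PF)]_{1/t}\,t^{\area(\PF)}q^{\dinv(\PF)},
\]
and, when $3\mid b$ (the non-coprime case), to incorporate the return factor by re-weighting exactly as in Section~4.2.3.

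The main step is a bijective decomposition of the set of fixed parking functions with $\pides=2^a1^b$ into a ``bulk'' part and a ``boundary'' part. For the bulk, I would construct a map that deletes, near the main diagonal at the bottom of the three columns, a removable staircase block — three consecutive north steps in distinct columns together with the cells they bound — producing a fixed parking function in $\PFc_{3,n-3}$ with $\pides=2^a1^{b-3}$; the deletion should change $\area$ and $\dinv$ each by exactly $1$, so that read backwards it is a weight-$(qt)$ bijection and the bulk contributes precisely $(qt)\,[s_{2^a1^{b-3}}]_{3,n-3}$. The ``boundary'' consists of the remaining irreducible fixed parking functions, those hugging the diagonal so tightly that no such block can be removed; I expect their coarea shapes to be forced to near-staircase partitions of $(3,n)$ and the parking functions themselves to organize into $a+1$ one-parameter chains indexed by $i\in\{0,\dots,a\}$ (think of $i$ as recording how the parts of size $2$ of $\pides$ are distributed among the three columns), the $i$-th chain consisting of parking functions of weights $q^{b+i-1},q^{b+i-2}t,\dots,t^{b+i-1}$ obtained by sliding the block along the diagonal, exactly as in the chains $\Lambda_1\cup\Lambda_2\cup\Lambda_3$ built in Section~4.2. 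Summing these $a+1$ chains yields $\sum_{i=0}^{a}\qtn{b+i}$, which completes the recursion.

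I expect the main obstacle to be this second step, and in particular proving that the removable-block map is well defined, weight-exact, and a bijection onto \emph{all} fixed parking functions of $\PFc_{3,n-3}$ with $\pides=2^a1^{b-3}$. As in the proofs of \tref{comb1} and \tref{comb2}, deleting the block simultaneously alters the path — hence $\pdinv$, $\maxdinv$, and $\dinvcorr$ — and the relative ranks of the affected cars — hence $\tdinv$ — so one must show, via the ``fire two lines of slope $n/3$'' argument, that these changes cancel down to a single net unit; the subtlety is that $a<b$ should be exactly the hypothesis ensuring that a block of the required type is present in the bulk case and absent precisely for the irreducible chains. Two secondary difficulties are making $\Phi$ commute with block deletion so that ``fixed'' is preserved under the bijection, and checking that the return re-weighting of Section~4.2.3 is compatible with block deletion when $3\mid b$. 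The base cases $b\in\{0,1,2\}$, where the first term is absent, reduce to known data: $b=0$ is trivial, and $b=1,2$ follow from $\Qmn{3,n}(1)$ for small $n$ together with \tref{1} and the identity $[s_{3^a2^b1^c}]_{3,n}=[s_{2^b1^c}]_{3,n-3a}$; combined with \cref{2a1b}, the recursion then pins down every $[s_{2^a1^b}]_{3,n}$.
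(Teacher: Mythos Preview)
This statement appears in the paper as a \emph{conjecture}, verified only by Maple for $n<27$; the paper offers no proof. So there is nothing in the paper to compare your attempt against --- you are proposing an attack on an open problem.

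Your architecture (pass to $\Phi$-fixed parking functions via Corollary~\ref{corollary:3ncase}, then split into a bulk in $(qt)$-bijection with the $(a,b-3)$ fixed set plus a boundary summing to $\sum_{i=0}^a\qtn{b+i}$) is a reasonable plan modeled on Sections~3--4, but as written it is not a proof: both decisive steps are phrased as hopes (``I would construct'', ``should change'', ``I expect''). The central gap is the bulk bijection. You never specify \emph{which} three cars to delete, and the analogy with $\vstr$ (\tref{comb2}) is misleading: $\vstr$ inserts a block of $m=3$ cars, adds a part of size~$3$ to $\pides$, and \emph{preserves} $\area$ and $\dinv$; you instead need an operation that removes three \emph{singleton} cars (three parts of size~$1$ from $\pides$) and shifts both statistics by exactly~$1$. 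No such map is constructed anywhere in the paper, and it is not clear one exists on individual parking functions --- the $(qt)$ factor could in principle emerge only after cancellation, in which case a bijection of the type you describe cannot exist. The ``fire two lines'' argument you invoke controls $\dinvcorr$ when the diagonal slope changes, but here the slope $n/3$ itself moves, and you give no reason the $\tdinv$ and $\dinvcorr$ perturbations net to a single unit. The boundary step has the same defect: the chain decompositions of Section~4.2 succeed because $n=3$ forces $\lambda(\Pi)$ to have at most two rows, permitting an explicit case enumeration; for $n=2a+b$ arbitrary you give no argument that the ``irreducible'' fixed parking functions fall into exactly $a+1$ chains of lengths $b,b+1,\dots,b+a$. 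Finally, your closing sentence appeals to \cref{2a1b}, itself open in the paper, so even granting the recursion you would not have ``pinned down every $[s_{2^a1^b}]_{3,n}$''.
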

We have verified this formula by Maple for $n<27$. If the conjectures are true, then we have solved the Schur function expansion in the $(3,n)$ case.

\subsection{The involution $\Phi$ and the map $\swi$}
We shall first introduce an involution on $(3,n)$-parking functions whose pideses contain $1,3$ or $2,2$. The fixed points of the involution is a subset of parking functions whose pideses do not contain $1,3$. Then, we conjecture a bijection between the fixed parking functions with $\pides\ 2^a1^b$ and the fixed parking functions with $\pides\ 2^b1^a$ for the coprime case.

\subsubsection{The involution $\Phi$}
An \emph{involution} $f$ of a set $S$ is a bijection from $S$ to itself, such that $f^2 = \mathrm{id}$ is the identity map. An element $s\in S$ such that $f(s)=s$ is called a \emph{fixed point} of the involution. 

Suppose that there is a weight function $w(s)$ for each elements $s\in S$.
A \emph{sign-reversing involution} $f$ of the set $S$ (with respect to the weight $w$) is an involution such that for all $s\in S$, if $f(s)\neq s$, then $w(f(s))=-w(s)$. As a consequence, we have
\begin{equation}
\sum_{s\in S}w(s) = \sum_{s\in S} w(f(s)) = \sum_{s\in S, f(s)=s} w(s),
\end{equation}
i.e.\ we only need to consider the fixed points of $f$ when computing the total weight of the set $S$.

By definition, 
\begin{equation}\label{PF3n}
\Qmn{3,n}(1)=\Hikita_{3,n}[X; q, t]=\sum_{\PF\in\PFc_{3,n}}
[\ret(\PF)]_{\frac{1}{t}} t^{\area(\PF)}q^{\dinv(\PF)}s_{\pides(\PF)}
\end{equation}
is a sum of weights of parking functions in $\PFc_{3,n}$, where the weight of a parking function $\PF$ is $[\ret(\PF)]_{\frac{1}{t}} t^{\area(\PF)}q^{\dinv(\PF)}s_{\pides(\PF)}$. In this section, we build a sign-reversing involution $\Phi$ for the set $\PFc_{3,n}$ in order to simplify the computation of the polynomial $\Qmn{3,n}(1)$.

Note that by the straightening action on Schur functions, we have 
\begin{equation}
s_{\lambda 13 \mu}=-s_{\lambda 22 \mu},
\end{equation}
where $\la$ and $\mu$ are two compositions, and $\lambda 13 \mu$ (or $\lambda 22 \mu$) is the composition obtained by first listing all the parts in $\la$, then two parts of sizes $1$ and $3$ (or 2 and 2), finally all the parts in $\mu$.

Let $\PFaa$ be the set of parking functions in $\PFc_{3,n}$ with pides $\lambda 13 \mu$ 
and $\PFbb$ be the set of parking functions in $\PFc_{3,n}$ with pides $\lambda 22 \mu$, 
then we can give an involution $\Phi$ of the set $\PFaa\cup\PFbb$, such that 
\begin{itemize}
	\item all the fixed points of $\Phi$ are in the set $\PFbb$, and
	\item the set of non-fixed points in $\PFbb$ are in bijection with the set $\PFaa$.
\end{itemize}

Let $\pi$ be a parking function in $\PFc_{3,n}$. If $\pides(\PF) = \lambda 13 \mu$, then without loss of generality, we suppose that the cars causing pides $13$ are $1,2,3,4$, which means that $\rank(1)<\rank(2)>\rank(3)>\rank(4)$. There are 3 possible subwords (subsequences of the words of $\pi$) formed by the $4$ cars, which are
$$
2341,\ \ 2314,\ \ 2134.
$$

On the other hand, the cars $2,3,4$ are in different columns since $\rank(2)>\rank(3)>\rank(4)$. Given that there are only 3 columns,  we have three possible placements of the four cars:
\begin{enumerate}[(I)]
	\item Cars $1$ and $4$ are in the same column.
	\item Cars $1$ and $2$ are in the same column.
	\item Cars $1$ and $3$ are in the same column.
\end{enumerate}

If the four cars form a word $2341$, then (I), (II), (III) are all possible; if the four cars form a word $2314$, then only (II), (III) are possible; if the four cars form a word $2134$, then only (II) is possible.

\bigskip
Next, we consider the case when $\pides(\PF) = \lambda 13 \mu$, i.e.\ the cars $1,2,3,4$ cause pides $22$, and $\rank(1)>\rank(2)<\rank(3)>\rank(4)$. The possible words are
$$
3412,\ 3142,\ 3124,\ 1324,\ 1342.
$$
The cars $1,2$ and the cars $3,4$ have to be in different columns since $\rank(1)>\rank(2)$ and $\rank(3)>\rank(4)$, thus we have the following five possible placements of the four cars:
\begin{enumerate}[(i)]
	\item Both cars $1,3$ and $2,4$ are in the same column.
	\item Only cars $1$ and $4$ are in the same column.
	\item Only cars $2$ and $4$ are in the same column.
	\item Only cars $1$ and $3$ are in the same column.
	\item Only cars $2$ and $3$ are in the same column.
\end{enumerate}
If the four cars form a word $3412$, then (i), (ii), (iii), (iv) and (v) are all possible; if the four cars form a word $3142$, then only (i), (iii), (iv) and (v) are possible; if the four cars form a word $3124$, then only (iv) and (v) are possible; if the four cars form a word $1324$, then only (v) is possible; if the four cars form a word $1342$, then only (iii) and (v) are possible. 

For any permutation $\sg\in\Sn{n}$ and any $\PF\in\PFc_{3,n}$, we let $\sg\cdot\PF$ be the parking function obtained by permuting the cars of $\PF$ by the permutation $\sg$. We also let $\word(\PF)$ be the word of the cars $1,2,3,4$. Then we can define the map $\Phi$ on the set  $\PFaa$ that
\begin{equation*}
\Phi|_{\PFaa}:\quad\PFaa \quad\rightarrow\quad\PFbb.
\end{equation*}
Following is the detailed definition, while the words and the placements of the images are recorded in each case:
\begin{equation}\label{Phimap}
\Phi(\PF) = \begin{cases} 
(1,2)\PF & \mbox{if } \word(\PF)=2341 \mbox{ and placement is (I). \ \ } \Phi(\PF) \mbox{ has word }1342 \mbox{ (iii).}\\
(1,2,3)\PF & \mbox{if } \word(\PF)=2341 \mbox{ and placement is (II). \ }\Phi(\PF) \mbox{ has word }3142 \mbox{ (v).}\\
(1,2)\PF & \mbox{if } \word(\PF)=2341 \mbox{ and placement is (III). }\Phi(\PF) \mbox{ has word }1342 \mbox{ (v).}\\
(1,2,3)\PF & \mbox{if } \word(\PF)=2314 \mbox{ and placement is (II). \ }\Phi(\PF) \mbox{ has word }3124 \mbox{ (v).}\\
(1,2)\PF & \mbox{if } \word(\PF)=2314 \mbox{ and placement is (III). }\Phi(\PF) \mbox{ has word }1324 \mbox{ (v).}\\
(2,3)\PF & \mbox{if } \word(\PF)=2134 \mbox{ and placement is (II). \ }\Phi(\PF) \mbox{ has word }3124 \mbox{ (iv).}
\end{cases}
\end{equation}
Then we shall define the map $\Phi$ on the set  $\PFbb$ that
\begin{equation*}
\Phi|_{\PFbb}:\quad\PFbb \quad\rightarrow\quad\PFaa\cup\PFbb.
\end{equation*}
Notice that the non-fixed points in $\PFbb$ are mapped into the set $\PFaa$. We have
\begin{equation}\label{Psimap}
\Phi(\PF) = \begin{cases} 
\PF & \mbox{if } \word(\PF)=3412 \mbox{, or } \word(\PF)=3142\mbox{ and placement is (i),(iii),(iv).}\\
(1,3,2)\PF & \mbox{if } \word(\PF)=3142 \mbox{ and placement is (v). \ } \Phi(\PF) \mbox{ has word }2341 \mbox{ (II).}\\
(2,3)\PF & \mbox{if } \word(\PF)=3124 \mbox{ and placement is (iv). } \Phi(\PF) \mbox{ has word }2134 \mbox{ (II).}\\
(1,3,2)\PF & \mbox{if } \word(\PF)=3124 \mbox{ and placement is (v). \ } \Phi(\PF) \mbox{ has word }2314 \mbox{ (II).}\\
(1,2)\PF & \mbox{if } \word(\PF)=1324 \mbox{ and placement is (v). \ } \Phi(\PF) \mbox{ has word }2314 \mbox{ (III).}\\
(1,2)\PF & \mbox{if } \word(\PF)=1342 \mbox{ and placement is (iii). } \Phi(\PF) \mbox{ has word }2341 \mbox{ (I).}\\
(1,2)\PF & \mbox{if } \word(\PF)=1342 \mbox{ and placement is (v). \ } \Phi(\PF) \mbox{ has word }2341 \mbox{ (III).}
\end{cases}
\end{equation}
The first case above gives the fixed points of $\Phi$.

It is easy to check that the map $\Phi$ preserves area and the dinv since $\Phi$ does not change the Dyck path of $\pi$, and it also preserves the cars other than $\{1,2,3,4\}$. Since $\Phi$ changes the sign of the weight of each non-fixed point, it follows immediately that $\Phi$ forms a sign-reversing involution of the set of parking functions in $\PFc_{3,n}$ with pideses of either $\lambda 13 \mu$ or $\lambda 22 \mu$. As we mentioned, the set of fixed points of this involution is 
$$\mathrm{fp}(\Phi) = \{\PF\in\PFbb\ :\ \word(\PF)=3412, \mbox{ or } \word(\PF)=3142\mbox{ (i), (iii), (iv)}\}.$$

If we apply the involution $\Phi$ to all the parking functions $\PF\in\PFc_{3,n}$ that
we compute $\pides(\PF)$ and scan from left to right to find the first occurrence of either $(1,3)$ or non-fixed $(2,2)$ and apply $\Phi$ at that position. Then clearly, we have
\begin{theorem}\label{theorem:involution}
$\Phi$ is a sign-reversing involution of the set of parking functions in $\PFc_{3,n}$.
\end{theorem}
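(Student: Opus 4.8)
The plan is to reduce the statement to a finite, local verification and then globalize it. Given $\PF\in\PFc_{3,n}$, the map $\Phi$ finds the first part of $\pides(\PF)$ that begins a $(1,3)$ or a non-fixed $(2,2)$, identifies the four consecutive cars causing it (relabelled $1,2,3,4$ in the analysis preceding (\ref{Phimap})), and relabels only those cars by one of the permutations in (\ref{Phimap})--(\ref{Psimap}); writing $\pides(\PF)=\lambda\,13\,\mu$ or $\lambda\,22\,\mu$ and $p=|\lambda|$, the four special cars are $p+1,p+2,p+3,p+4$. The fact that the six lines of (\ref{Phimap}) exhaust the $(1,3)$ situations and the seven lines of (\ref{Psimap}) exhaust the $(2,2)$ situations is exactly the placement analysis carried out just before the definition of $\Phi$. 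A $\Phi$-move changes neither the supporting Dyck path of $\PF$ nor the set of cells occupied by cars, so $\area$, $\ret$ and $\dinvcorr$ are preserved automatically; since $\dinv=\tdinv+\dinvcorr$, it remains to control $\tdinv$ and $\pides$ (and to check that the relabelled object is still a legal parking function, which again follows in each case from the placement constraints).

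For $\tdinv$, write $\tdinv(\PF)=\sum_{i<j}\chi(\rank(i)<\rank(j)<\rank(i)+3)$ and split by how many of $i,j$ lie in $B=\{p+1,p+2,p+3,p+4\}$: the pairs disjoint from $B$ are untouched, and the pairs meeting $B$ in a single index contribute a quantity depending only on the multiset of ranks of the cars in $B$, which the relabelling leaves fixed, so only the internal sum over pairs inside $B$ can move. I would settle this by a short case table over the twelve lines of (\ref{Phimap})--(\ref{Psimap}), exploiting the rigidity forced by $n=3$: any two of the four cars lying in a common column carry consecutive labels (no third label can separate them in a column), hence occupy adjacent cells, hence have ranks differing by exactly $3$, which pins the four ranks down up to a global shift. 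For $\pides$: the cars $\le p$ keep their ranks, so $\ides(\PF)\cap\{1,\dots,p-1\}$ is unchanged; the cars $\ge p+5$ keep their ranks; and no permutation in (\ref{Phimap})--(\ref{Psimap}) moves car $4$, so the status of $p+4\in\ides(\PF)$ is preserved. A line-by-line check then shows the descent pattern among $p+1,\dots,p+4$ flips between the $13$-shape and the $22$-shape with exactly the recorded image word and placement, so $\Phi$ carries $\PFaa$ into $\PFbb$ and the non-fixed part of $\PFbb$ into $\PFaa$, with $\pides$ going from $\lambda\,13\,\mu$ to $\lambda\,22\,\mu$ or back.

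The one genuinely delicate point is the descent at the left boundary, $p\in\ides(\PF)$, i.e.\ $\rank(p)<\rank(p+1)$, since a move can lower $\rank(p+1)$. In the $\PFaa\to\PFbb$ direction, car $1$ always moves to a cell of rank at least its old rank, so this descent survives trivially. In the $\PFbb\to\PFaa$ direction one combines the rigidity above — which leaves at most one value of $\rank(p)$ for which the descent could be destroyed — with the arithmetic of ranks: all cells in a fixed column have ranks in a single residue class modulo $3$. Tracking residues shows that this exceptional value would force car $p$ to lie in the same column as a car $p+j$ of strictly smaller rank with $p<p+j$, which is impossible since labels increase up a column; hence the bad case never occurs and $p\in\ides(\PF)$ is preserved.

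Given the local checks, the rest is formal. For a non-fixed $\PF$, $w(\Phi(\PF))=[\ret(\PF)]_{\frac{1}{t}}t^{\area(\PF)}q^{\dinv(\PF)}s_{\lambda 22\mu}=-w(\PF)$ by the straightening relation $s_{\lambda 13\mu}=-s_{\lambda 22\mu}$, so $\Phi$ is sign-reversing. For $\Phi^2=\mathrm{id}$: after a move, $\pides$ is unchanged strictly to the left of the block and the block now begins the opposite pattern (a $(1,3)$ becomes a non-fixed $(2,2)$, and conversely, by the image data in (\ref{Phimap})--(\ref{Psimap})), so the second application of $\Phi$ picks the same block; matching the six lines of (\ref{Phimap}) with the six non-fixed lines of (\ref{Psimap}) by their (word, placement) data and checking that the paired permutations compose to the identity — e.g.\ $(1,2)(1,2)=\mathrm{id}$, $(1,2,3)(1,3,2)=\mathrm{id}$, $(2,3)(2,3)=\mathrm{id}$ — gives $\Phi(\Phi(\PF))=\PF$, while the first line of (\ref{Psimap}) lists genuine fixed points. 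The step I expect to be the main obstacle is the double case analysis behind the $\tdinv$- and $\pides$-invariances, and above all making the left-boundary preservation argument airtight for every placement, since this is where the combinatorics special to $(3,n)$-parking functions — consecutive labels are cellwise adjacent, and ranks within a column are congruent modulo $3$ — really gets used.
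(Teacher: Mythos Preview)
Your overall scheme matches the paper's (which says little more than ``it is easy to check'' and ``then clearly''), and several of your observations are correct and sharper than anything in the paper: the multiset argument for $\tdinv$ on pairs meeting $B$ in exactly one index, the right-boundary observation that car $4$ never moves, and the explicit pairing of the six non-fixed lines of (\ref{Psimap}) with the six lines of (\ref{Phimap}) via inverse permutations.

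However, the ``rigidity'' claim and the left-boundary residue argument both fail. Adjacency in a shared column does give a rank difference of $3$ for that pair, but it does \emph{not} pin all four ranks down up to a shift: a car alone in its column is only constrained by its residue class and the word, so e.g.\ in word $1342$ placement (v) one has $\rho_3=\rho_2+3$, $\rho_4\in\{\rho_2+1,\rho_2+2\}$, while $\rho_1$ can be $\rho_2+5,\rho_2+8,\dots$. More seriously, your residue argument for preserving $p\in\ides$ breaks on a concrete $\PF\in\PFc_{3,7}$: place cars $3,4,6$ at $(0,0),(0,1),(0,2)$, cars $1,2$ at $(1,3),(1,4)$, cars $5,7$ at $(2,5),(2,6)$. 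Then $\pides(\PF)=(1,2,2,2)$, the first non-fixed $22$ sits on cars $2,3,4,5$ with word $1342$ and placement (v), and $\Phi$ swaps cars $2$ and $3$. Here $p=1$ sits directly \emph{below} $p+1=2$ in column $1$, so $p$ shares a column with a car of \emph{larger} rank, and your impossibility clause (``same column as a car $p+j$ of strictly smaller rank'') never triggers. After the swap, $\rank(2)=0<2=\rank(1)$, the descent at $p$ is destroyed, $\pides(\Phi(\PF))=(2,3,2)$, and $\Phi(\Phi(\PF))=\Phi(\PF)\neq\PF$. Thus $\Phi$ as described is not an involution on this $\PF$; since $s_{(1,2,2,2)}=s_{(2,3,2)}=0$ the weights happen to vanish, so Corollary~\ref{corollary:3ncase} is unaffected, but the theorem as literally stated (and the paper's ``clearly'') needs either a more careful treatment of the left boundary or a restriction to parking functions of nonzero weight.
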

The fixed points in $\PFc_{3,n}$ have weakly decreasing pides in the form $3^a2^b1^c$, and these parking functions contribute to the coefficients of the Schur functions. Thus, we have the Schur positivity of the $m=3$ case:
\begin{corollary}\label{corollary:3ncase}
The polynomial $\Qmn{3,n}(1)$ is Schur positive, and
\begin{equation}\label{PF2a1b}
\scoeff{2^a1^b}_{3,n}=\sum_{
	\substack{\PF\in\PFc_{3,n},\ \pides(\PF)=2^a1^b,\\
		\PF\textnormal{ fixed by }\Phi}
}
[\ret(\PF)]_{\frac{1}{t}} t^{\area(\PF)}q^{\dinv(\PF)}.
\end{equation}
\end{corollary}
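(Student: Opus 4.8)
The plan is to deduce both assertions formally from \tref{involution} and the straightening law (\ref{straight}), with no further computation. Starting from the expansion (\ref{PF3n}) of $\Qmn{3,n}(1)$ as a sum over $\PFc_{3,n}$ of the terms $w(\PF):=[\ret(\PF)]_{\frac{1}{t}}t^{\area(\PF)}q^{\dinv(\PF)}s_{\pides(\PF)}$, I would regard $w$ as a signed weight whose sign is carried by the Schur polynomial $s_{\pides(\PF)}$. Because $\Phi$ leaves the underlying Dyck path and every car outside $\{1,2,3,4\}$ fixed, it preserves $\ret$, $\area$ and $\dinv$; and on each non-fixed point it turns a pides of type $\lambda 13\mu$ into one of type $\lambda 22\mu$ or conversely, so that $s_{\pides(\Phi(\PF))}=-s_{\pides(\PF)}$ by (\ref{straight}). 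Hence $w(\Phi(\PF))=-w(\PF)$ on non-fixed points, and since $\Phi$ is an involution of all of $\PFc_{3,n}$ by \tref{involution}, the sum collapses to $\sum_{\PF\in\mathrm{fp}(\Phi)}w(\PF)$.

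The second step is to determine which fixed points contribute a nonzero term. By Remark \ref{remark:2} every part of $\pides(\PF)$ is at most $3$, and for a composition with all parts in $\{1,2,3\}$ any consecutive pair $(1,2)$ or $(2,3)$ forces the associated Schur function to vanish, since straightening that pair via (\ref{straight}) gives $s_\alpha=-s_\alpha$. A fixed point has no occurrence of $13$ in its pides --- those are exactly the patterns acted on by $\Phi$ --- so if its pides is not weakly decreasing it must contain a $(1,2)$ or a $(2,3)$ and therefore contributes $0$. Consequently the surviving fixed points are precisely those whose pides is a partition of the form $3^a2^b1^c$ (necessarily with $3a+2b+c=n$, which is exactly the range of shapes permitted by Remark \ref{remark:2}), and for each such $\PF$ the term $s_{\pides(\PF)}=s_{3^a2^b1^c}$ is a genuine Schur function entering with coefficient $+1$. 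Collecting the surviving fixed points according to their common pides $2^a1^b$ gives the formula (\ref{PF2a1b}); collecting them by the general shape $3^a2^b1^c$ exhibits $\Qmn{3,n}(1)$ as a sum of Schur functions with no sign cancellation, which is the asserted Schur positivity.

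I expect no genuine obstacle, because the substantive work is already contained in \tref{involution} --- in particular in the case analysis (\ref{Phimap})--(\ref{Psimap}) showing that $\Phi$ is a well-defined involution and that $\mathrm{fp}(\Phi)$ consists exactly of the $\PF\in\PFbb$ whose word on cars $1,2,3,4$ is $3412$, or is $3142$ in placement (i), (iii) or (iv). The one point that needs a moment's care is the reduction in the second paragraph: one must note that, among compositions with parts of size at most $3$, the only ascending consecutive pairs are $(1,2)$, $(1,3)$ and $(2,3)$, so once $13$ is excluded a non-partition pides necessarily contains a $(1,2)$ or $(2,3)$ and is killed by straightening.
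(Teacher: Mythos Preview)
Your proposal is correct and follows essentially the same approach as the paper, deducing the corollary from Theorem~\ref{theorem:involution} together with the straightening law~(\ref{straight}). Your argument is in fact slightly more careful than the paper's one-sentence justification: you explicitly dispose of fixed points whose pides contains a consecutive $(1,2)$ or $(2,3)$ by noting that $s_\alpha=0$ in those cases, a point the paper elides by simply asserting that ``the fixed points in $\PFc_{3,n}$ have weakly decreasing pides in the form $3^a2^b1^c$''.
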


\subsubsection{The map $\swi$ and the symmetry $[s_{2^a1^b}]_{3,n}=[s_{2^b1^a}]_{3,3(a+b)-n}$}
For any parking function $\PF\in\PFc_{3,n}$ with $\pides(\PF)=2^a1^b$, we study the placement of the $a$ pairs of numbers $$\{(1,2),(3,4),\ldots,(2a-1,2a)\}$$ and the $b$ singletons $$\{2a+1,\ldots,2a+b\}.$$ Note that the two cars in each pair cannot be placed in the same column since the rank of the smaller car is larger than the rank of the bigger car.

Since there are 3 columns, we have $\binom{3}{2}$ ways to choose columns for each pair $(2i-1,2i)$. We name the 3 columns from left to right by $\ell,c,r$. Once we determine 2 columns for the pair, the filling of the two cars in the pair is fixed by their ranks since $\rank(2i-1)>\rank(2i)$. Now, we define the notation for the placement of a pair $(2i-1,2i)$:
\begin{enumerate}
	\item $L$ means $(2i-1,2i)$ are in the left 2 columns $\ell,c$,
	\item $R$ means $(2i-1,2i)$ are in the right 2 columns $c,r$,
	\item $C$ means $(2i-1,2i)$ are in columns $\ell,r$.
\end{enumerate} 
Similarly, we have $\binom{3}{1}$ ways to choose a column for each singleton. For a singleton $j$, we define the notation for the placement:
\begin{enumerate}
	\item $L$ means $j$ is in the left column $\ell$,
	\item $R$ means $j$ is in the right column $r$,
	\item $C$ means $j$ is in column $c$.
\end{enumerate}

Now we are ready to describe the map $\swi$.
Given a parking function $\PF\in\PFc_{3,n}$ with $\pides(\PF)=2^a1^b$, we track the placements of the $a$ pairs of cars $\{(1,2),\ldots,(2a-1,2a)\}$ and the $b$ singleton cars $\{2a+1,\ldots,2a+b\}$. Let the $a+b$ placements of these $a+b$ objects be $p_1,\ldots,p_a,p_{a+1},\ldots,p_{a+b}$ (here $p_i$ is one of $L$, $R$ or $C$). 

Then we consider $b$ pairs of cars $\{(1,2),\ldots,(2b-1,2b)\}$ and $a$ singleton cars $\{2b+1,\ldots,2b+a\}$. We build a new parking function $\swi(\PF)$ by first counting how many cars in each column if we assign the $b+a$ placements $p_{b+a},\dots,p_1$ to the $b+a$ objects in this reversed order, then constructing the path according to the number of cars of each column. Finally, we fill from  the first pair $(1,2)$ to the last singleton $2b+a$ based on the rule that $\rank(2i-1)<\rank(2i)$ for $i\leq b$ for the $b$ pairs of cars and the column placement choice $p_{b+a},\dots,p_1$. We call this map the switch map $\mathbb{S}$. \fref{abba} shows an example that we can construct a parking function in $\PFc_{3,5}$ with $\pides\ 21^3$ from a parking function in $\PFc_{3,7}$ with $\pides\ 2^31$.
\begin{figure}[ht!]
	\centering
	\resizebox{\columnwidth}{!}{\begin{tikzpicture}[scale=0.4]
		\fillshade{1/1,1/2,1/3,2/3,2/4,2/5,3/5,3/6,3/7};
		\PFmnu{0,0}{3}{7}{0/2,0/4,0/7,1/1,1/3,1/5,2/6,0/0};
		\arrowx{3.2,2.5}{4.8,2.5};
		\end{tikzpicture}
		\begin{tikzpicture}[scale=0.4]
		\draw[help lines] (0,0) grid (3,4);
		\fillsome{1/1/1,2/1/2,1/2/3,2/2/4,2/3/5,3/3/6,1/4/7,0/1/L,0/2/L,0/3/R,0/4/L};
		\arrowx{3.2,2.5}{4.8,2.5};
		\end{tikzpicture}
		\begin{tikzpicture}[scale=0.4]
		\draw[help lines] (0,0) grid (3,4);
		\fillsome{1/1/1,2/1/2,3/2/3,1/3/4,1/4/5,0/1/L,0/2/R,0/3/L,0/4/L};
		\arrowx{3.2,2.5}{4.8,2.5};
		\end{tikzpicture}
		\begin{tikzpicture}[scale=0.4]
		\draw[help lines] (0,0) grid (3,5);
		\draw[ultra thick,blue] (0,0) -- (0,3) -- (1,3) -- (1,4) -- (2,4) -- (2,5) -- (3,5);
		\fillsome{1/1/2,2/4/1};
		\arrowx{3.2,2.5}{4.8,2.5};
		\end{tikzpicture}
		\begin{tikzpicture}[scale=0.4]
		\draw[help lines] (0,0) grid (3,5);
		\draw[ultra thick,blue] (0,0) -- (0,3) -- (1,3) -- (1,4) -- (2,4) -- (2,5) -- (3,5);
		\fillsome{1/1/2,2/4/1,3/5/3};
		\arrowx{3.2,2.5}{4.8,2.5};
		\end{tikzpicture}
		\begin{tikzpicture}[scale=0.4]
		\draw[help lines] (0,0) grid (3,5);
		\draw[ultra thick,blue] (0,0) -- (0,3) -- (1,3) -- (1,4) -- (2,4) -- (2,5) -- (3,5);
		\fillsome{1/1/2,2/4/1,3/5/3,1/2/4};
		\arrowx{3.2,2.5}{4.8,2.5};
		\end{tikzpicture}
		\begin{tikzpicture}[scale=0.4]
		\fillshade{1/1,1/2,2/2,2/3,2/4,3/4,3/5};
		\PFmnu{0,0}{3}{5}{0/2,0/4,0/5,1/1,2/3,0/0};
		\end{tikzpicture}}
	\caption{$\PF\in\PFc_{3,7}$ with $\pides\ 2^31$ and $\swi(\PF)\in\PFc_{3,5}$ with $\pides\ 21^3$.}
	\label{fig:abba}	
\end{figure}

We shall  prove several properties of the map $\mathbb{S}$. It is even not obvious that the image of a parking function is still above the diagonal, thus we shall show that
\begin{theorem}\label{theorem:feas}
	If $\PF$ is a $(3,n)$-parking function with pides $2^a1^b$, then $\swi(\PF)$ is also a parking function.
\end{theorem}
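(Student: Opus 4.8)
The plan is to show that the lattice path constructed by $\swi$ never dips below the main diagonal $y=\tfrac{n'}{3}x$ (where $n'=3(a+b)-n$ is the new height), by tracking column heights. Recall that for a parking function $\PF$ with $\pides(\PF)=2^a1^b$ we have $n=3a+2b - (\#\text{of }C\text{-pairs}) + \cdots$; more usefully, each pair placed at $L$ contributes one north step to columns $\ell$ and $c$, each pair at $R$ contributes to $c$ and $r$, each pair at $C$ contributes to $\ell$ and $r$, and each singleton contributes one north step to its single column. So if $p_1,\dots,p_{a+b}$ are the placements in $\PF$ (the first $a$ for the pairs, the last $b$ for the singletons), the column heights of $\Pi(\PF)$ are determined by how many of the pair-placements and singleton-placements land in each column. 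The key combinatorial observation is that the multiset of column heights of $\swi(\PF)$ is obtained from that of $\PF$ by a controlled operation: reversing the list of placements and reinterpreting the first $b$ of them as pairs and the last $a$ as singletons changes each column count in a way that can be written down explicitly in terms of the numbers $\#\{i : p_i = L\}$, $\#\{i:p_i=R\}$, $\#\{i:p_i=C\}$ restricted to pair-indices versus singleton-indices.

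First I would set up notation: let $x_L, x_R, x_C$ be the numbers of pair-placements equal to $L,R,C$ among $p_1,\dots,p_a$, and $y_L,y_R,y_C$ the numbers of singleton-placements equal to $L,R,C$ among $p_{a+1},\dots,p_{a+b}$, so $x_L+x_R+x_C=a$ and $y_L+y_R+y_C=b$. Then the heights of columns $\ell,c,r$ of $\Pi(\PF)$ are $h_\ell = x_L + x_C + y_L$, $h_c = x_L + x_R + y_C$, $h_r = x_R + x_C + y_R$, with $h_\ell+h_c+h_r = n$. Under $\swi$, the reversed placement list has its \emph{first} $b$ entries being the old singleton placements (in reverse), now read as pairs, and its \emph{last} $a$ entries the old pair placements (in reverse), now read as singletons. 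Hence the new column heights are $h'_\ell = y_L + y_C + x_L$, $h'_c = y_L + y_R + x_C$, $h'_r = y_R + y_C + x_R$, summing to $n' = 3b+2a - (\text{correction}) = 3(a+b)-n$ after one checks the bookkeeping. The path of $\swi(\PF)$ is the unique Dyck path with these column heights in the order $(\ell,c,r)$; the cars are then forced by the rank conditions, so the only thing to verify is that this path stays weakly above $y=\tfrac{n'}{3}x$.

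Next I would reduce "stays above the diagonal" to two inequalities on partial sums of column heights: a $(3,n')$-path with column heights $g_1,g_2,g_3$ (bottom to top in column order $\ell,c,r$ meaning $g_1=h'_\ell$ etc.) is a valid Dyck path iff $g_1 \ge \lceil n'/3\rceil$-type bounds — concretely iff $3 g_1 \ge n'$ and $3(g_1+g_2)\ge 2n'$, equivalently $g_1 \ge n'/3$ and $g_3 \le n'/3$ after rearranging (using $g_1+g_2+g_3 = n'$). Here $g_1 = h'_\ell$ and $g_3 = h'_r$. So the theorem reduces to proving $3 h'_\ell \ge n'$ and $3 h'_r \le n'$. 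I would derive these from the corresponding facts for $\PF$: since $\PF$ is a genuine parking function, $3 h_\ell \ge n$ and $3 h_r \le n$, i.e. $3(x_L+x_C+y_L) \ge n$ and $3(x_R+x_C+y_R)\le n$. Substituting $n = x_L+x_R+x_C+$ (stuff) and carefully translating, the inequality $3 h_r \le n$ becomes a linear inequality among $x_\bullet, y_\bullet$ that, after the index-swap, is \emph{exactly} the inequality $3 h'_\ell \ge n'$ we need — and symmetrically $3h_\ell \ge n$ transforms into $3 h'_r \le n'$. This is where the $a\leftrightarrow b$ symmetry of the construction does the real work.

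The main obstacle I anticipate is getting the bookkeeping of $n'$ and the column heights exactly right: a pair occupies \emph{two} columns and a singleton occupies \emph{one}, so the total number of north steps is not simply preserved under the swap, and one must check that $n' = 3(a+b)-n$ falls out correctly and that the derived inequalities line up without an off-by-one error (the diagonal passing through lattice cells forces some floor/ceiling care, exactly as in the $\rank$ definition with the $\lfloor x\gcd/m\rfloor$ correction). I would handle this by first doing the coprime case cleanly — where there are no diagonal lattice points strictly inside except the endpoints — and then noting that in the coprime case the inequalities $3h_\ell \ge n$, $3h_r \le n$ are strict-or-equal in the expected way; the non-coprime subtleties do not arise here since $\swi$ is only claimed (in Conjecture~\ref{conjecture:2a1b} and the surrounding discussion) in the coprime case. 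Once the two partial-sum inequalities are established, the claim that $\swi(\PF)$ is a parking function is immediate, since the car labels are uniquely determined by the rank-increasing/rank-decreasing rules and there is always a valid such labelling of any Dyck path.
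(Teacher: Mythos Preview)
Your approach is essentially identical to the paper's: you introduce the same six counting variables (your $x_L,x_C,x_R,y_L,y_C,y_R$ are the paper's $\ell_1,c_1,r_1,\ell_2,c_2,r_2$), compute the column heights of $\Pi(\PF)$ and $\Pi(\swi(\PF))$ in terms of them, and reduce the Dyck condition to the two partial-sum inequalities, then observe that the two inequalities for $\swi(\PF)$ are exactly the two for $\PF$ after the substitution $n' = 3(a+b)-n$. Your concern about the non-coprime case is unnecessary: the inequalities $3h_\ell \ge n$ and $3h_r \le n$ (equivalently $3(h_\ell+h_c)\ge 2n$) characterize a $(3,n)$-Dyck path for all $n$, and your computation showing $3h_r \le n \Leftrightarrow 3h'_\ell \ge n'$ and $3h_\ell \ge n \Leftrightarrow 3h'_r \le n'$ is exact with no floor/ceiling correction needed---indeed the paper simply fixes $n=3k+1$ without loss of generality and derives the same implications.
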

\begin{proof}
	We still consider the $a$ pairs of cars $\{(1,2),\ldots,(2a-1,2a)\}$ and the $b$ singleton cars $\{2a+1,\ldots,2a+b\}$ of $\pi$.
	Suppose that there are $\ell_1,c_1,r_1$ placements of the first $a$ pairs of cars which are $L,\ R$ and $C$ respectively, and $\ell_2,c_2,r_2$ placements of the last $b$ singleton cars which are $L,\ R$ and $C$ respectively. Without loss of generality, we suppose that $n=3k+1$. Then we have 
	\begin{align}
	\ell_1+c_1+r_1=&a,\\
	\ell_2+c_2+r_2=&b,\\
	2a+b=&3k+1.
	\end{align}
	
	Since $\PF$ is a parking function, the path of the parking function should be above the diagonal, thus the number of cars in the left column is at least $k+1$ and the number of cars in the left two columns is at least $2k+1$. 
	
	Note that an $L$ placement of a pair contribute 1 left car and 1 center car, a $C$ placement of a pair contribute 1 left car and 1 right car, and an $R$ placement of a pair contribute 1 right car and 1 center car. The contribution of the singleton cars are obvious. Thus the number of cars in the left column is $\ell_1+c_1+\ell_2$, and the number of cars in the left 2 columns is $2\ell_1+r_1+c_1+\ell_2+c_2=a+\ell_1+\ell_2+c_2$, and we have that
	\begin{align}
	\label{ine1}\ell_1+c_1+\ell_2 \geq& k+1,\\
	\label{ine2}a+\ell_1+\ell_2+c_2\geq& 2k+1.
	\end{align} 
	
	Next, for $\swi(\PF)$, it has $\ell_2,c_2,r_2$ placements of the first $b$ pairs of cars which are $L,\ R$ and $C$ respectively, and $\ell_1,c_1,r_1$ placements of the last $a$ singleton cars which are $L,\ R$ and $C$ respectively. The total number of cars is equal to $2a+b=3(a+b)-(2b+a)=3(a+b)-3k-1=3(a+b-k-1)+2$, the number of cars in the left column should be at least $a+b-k=(2a+b)-a-k=3k+1-a-k=2k+1-a$, and the number of cars in the left two columns should be at least $2a+2b-2k=b+(3k+1)-2k=b+k+1$.  $\swi(\PF)$ is a parking function if the following equations hold:
	\begin{align}
	\label{ine3}\ell_1+c_2+\ell_2 \geq& 2k+1-a,\\
	\label{ine4}b+\ell_1+\ell_2+c_1\geq& b+k+1.
	\end{align} 
	Clearly, (\ref{ine1}) implies (\ref{ine4}), (\ref{ine2}) implies (\ref{ine3}).
\end{proof}

Next, we have the formula for area.
\begin{theorem}\label{theorem:areasw}
	Let $\PF$ be a $(3,n)$-parking function with pides $2^a1^b$. Using the definitions of \\$\ell_1,c_1,r_1,\ell_2,c_2,r_2$ in the proof of \tref{feas}. Let $L=\ell_1+\ell_2,R=r_1+r_2,C=c_1+c_2$, then 
	\begin{equation}
	\area(\PF)=L-R-1.
	\end{equation}
\end{theorem}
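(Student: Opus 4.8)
The plan is to turn the statement into a short linear computation in the six placement counts $\ell_1,c_1,r_1,\ell_2,c_2,r_2$ of the proof of \tref{feas}, by passing through the three column heights of the underlying Dyck path. Write $N_\ell,N_c,N_r$ for the numbers of north steps of $\Pi(\PF)$ in the left, center and right columns; these are exactly the numbers of cars of $\PF$ in columns $\ell,c,r$, and $N_\ell+N_c+N_r=n$. The argument will have three steps: (i) express $\area(\PF)$ through $N_\ell,N_c$; (ii) express $N_\ell,N_c$ through the placement counts; (iii) substitute $n=2a+b$ and simplify.

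For step (i): $\area(\PF)$ depends only on $\Pi(\PF)$, and I would use the identity $\area(\Pi)=|\lambda_0|-|\lambda(\Pi)|$ already used in Section~4, where now $\lambda_0$ denotes the largest partition fitting above a $(3,n)$-Dyck path, of size $\frac{(3-1)(n-1)}{2}=n-1$ when $\gcd(3,n)=1$. Over the left column $\Pi(\PF)$ sits at height $N_\ell$, over the center column at height $N_\ell+N_c$, and over the right column at height $n$, so $|\lambda(\PF)|=(n-N_\ell)+(n-N_\ell-N_c)=2n-2N_\ell-N_c$, whence
$$\area(\PF)=(n-1)-|\lambda(\PF)|=2N_\ell+N_c-(n+1).$$
(Equivalently, one can count column by column the cells lying below the path and strictly above the diagonal, excluding the cells the diagonal passes through; this gives the same formula.)

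For step (ii): a pair placed $L$ occupies columns $\ell,c$, a pair placed $C$ occupies columns $\ell,r$, and a pair placed $R$ occupies columns $c,r$, while a singleton placed $L$, $C$ or $R$ occupies column $\ell$, $c$ or $r$. Hence a pair placed $L,C,R$ contributes $3,2,1$ respectively to $2N_\ell+N_c$, and a singleton placed $L,C,R$ contributes $2,1,0$, giving $2N_\ell+N_c=3\ell_1+2c_1+r_1+2\ell_2+c_2$. For step (iii), substituting $n+1=2(\ell_1+c_1+r_1)+(\ell_2+c_2+r_2)+1$ into the displayed formula for $\area(\PF)$ yields
$$\area(\PF)=(3\ell_1+2c_1+r_1+2\ell_2+c_2)-(2\ell_1+2c_1+2r_1+\ell_2+c_2+r_2)-1=(\ell_1+\ell_2)-(r_1+r_2)-1=L-R-1,$$
as claimed. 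The only point requiring care is step (i): one must be precise about which cells along the diagonal are "cut through" and hence excluded, and about where coprimality of $3$ and $n$ enters (the identity $|\lambda_0|=n-1$ fails when $3\mid n$). But this is exactly the area bookkeeping already carried out for $(m,3)$-Dyck paths in Section~4, so there is no real obstacle; steps (ii) and (iii) are pure linear algebra in the placement counts.
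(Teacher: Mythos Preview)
Your proof is correct and follows essentially the same approach as the paper's own proof. Both arguments compute $\area(\PF)$ as the maximum coarea $n-1$ minus the coarea of $\Pi(\PF)$, then express the coarea linearly in the placement counts and simplify; the paper phrases the coarea as $N_c+2N_r$ (each center car contributes $1$, each right car contributes $2$) while you equivalently phrase it through $2N_\ell+N_c$, and the two are identical via $N_\ell+N_c+N_r=n$. Your caveat about the coprimality hypothesis is exactly the same one implicit in the paper's use of $\frac{(2a+b-1)(3-1)}{2}=2a+b-1$ for the maximum coarea.
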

\begin{proof}
	We want to compute the area of a parking function as the difference of its coarea and the maximum coarea of a $(3,2a+b)$-parking function.
	The maximum coarea of a $(3,2a+b)$-parking function is $\frac{(2a+b-1)(3-1)}{2}=2a+b-1$.
	
	Notice that the cars in the right column contribute 2 to coarea, and the cars in the center column contribute 1 to coarea, thus the coarea of $\pi$ is
	\begin{equation}
	\ell_1+3r_1+2c_1+2r_2+c_2=a+2(r_1+r_2)+(c_1+c_2)=a+2R+C.
	\end{equation}
	Then,
	\begin{equation}
	\area(\PF)=2a+b-1-(a+2R+C)=a+(L+R+C)-1-(a+2R+C)=L-R-1.\qedhere
	\end{equation}
\end{proof}
It follows immediately from \tref{areasw} that 
\begin{theorem}\label{theorem:areasw1}For any $\PF\in\PFc_{3,n}$ with $\pides(\PF)=2^a1^b$,
	\begin{equation}
	\area(\PF)=\area(\swi(\PF)).
	\end{equation}
\end{theorem}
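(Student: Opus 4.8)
The plan is to deduce this directly from \tref{areasw}, which already expresses $\area(\PF)$ purely in terms of the placement counts: with $L=\ell_1+\ell_2$, $R=r_1+r_2$, $C=c_1+c_2$ we have $\area(\PF)=L-R-1$. Since \tref{feas} guarantees that $\swi(\PF)$ is again a $(3,n')$-parking function (with $n'=3(a+b)-n$ and $\pides\ 2^b1^a$), \tref{areasw} applies verbatim to $\swi(\PF)$, giving $\area(\swi(\PF))=L'-R'-1$ where $L',R',C'$ are the corresponding placement counts of $\swi(\PF)$. Thus the whole task reduces to checking that $\swi$ leaves these counts unchanged, i.e. $L'=L$, $R'=R$, $C'=C$.

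To verify this I would just unwind the definition of $\swi$. If $(p_1,\dots,p_a,p_{a+1},\dots,p_{a+b})$ is the placement list of $\PF$ --- the first $a$ entries belonging to the pairs $(1,2),\dots,(2a-1,2a)$, the last $b$ entries to the singletons $2a+1,\dots,2a+b$ --- then by construction $\swi(\PF)$ uses the reversed list $(p_{a+b},\dots,p_1)$, whose first $b$ entries now govern the $b$ pairs of $\swi(\PF)$ and whose last $a$ entries govern its $a$ singletons. Hence the multiset of pair-placements of $\swi(\PF)$ is exactly $\{p_{a+1},\dots,p_{a+b}\}$ (the singleton-placements of $\PF$) and the multiset of singleton-placements of $\swi(\PF)$ is $\{p_1,\dots,p_a\}$ (the pair-placements of $\PF$). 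In the notation of the proof of \tref{feas} this says $(\ell_1',c_1',r_1')=(\ell_2,c_2,r_2)$ and $(\ell_2',c_2',r_2')=(\ell_1,c_1,r_1)$, so
$$L'=\ell_1'+\ell_2'=\ell_2+\ell_1=L,\qquad R'=r_1'+r_2'=r_2+r_1=R,\qquad C'=c_1'+c_2'=c_2+c_1=C.$$
Plugging these into the two instances of \tref{areasw} yields $\area(\swi(\PF))=L'-R'-1=L-R-1=\area(\PF)$, as claimed.

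There is essentially no real obstacle here; the statement is an immediate corollary of \tref{areasw}, and the only point requiring care is the bookkeeping --- correctly tracking which entries of the reversed placement list are reassigned to pairs versus singletons in $\swi(\PF)$, and noting that \tref{areasw} is stated for a general $(3,n)$-parking function (so that it may legitimately be invoked for $\swi(\PF)\in\PFc_{3,3(a+b)-n}$ as well as for $\PF\in\PFc_{3,n}$), rather than only in the residue class singled out for convenience in the proof of \tref{feas}. This matches the paper's own remark that the result ``follows immediately from \tref{areasw}''.
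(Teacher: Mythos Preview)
Your argument is correct and is exactly the unpacking of the paper's one-line justification (``follows immediately from \tref{areasw}''): the formula $\area=L-R-1$ depends only on the totals $L,R$, and reversing the placement list while swapping the roles of pairs and singletons leaves those totals unchanged.

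One small inaccuracy worth fixing: \tref{feas} does \emph{not} assert that $\pides(\swi(\PF))=2^b1^a$; that statement is part of the unproved \cref{2a1bn}(b). Consequently you cannot literally invoke the \emph{hypothesis} of \tref{areasw} for $\swi(\PF)$. What actually transfers is the column-count \emph{computation} in the proof of \tref{areasw}: the path of $\swi(\PF)$ is determined by construction from the placement data $(\ell_1',c_1',r_1',\ell_2',c_2',r_2')=(\ell_2,c_2,r_2,\ell_1,c_1,r_1)$, and the coarea/area calculation there uses only these column counts, not the $\pides$. With that rephrasing your proof is complete.
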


We have not yet proved, but verified all parking functions with less than or equal to 10 rows  for the following conjecture:
\begin{conjecture}\label{conjecture:2a1bn}
	For any $\PF\in\PFc_{3,n}$ with $\pides(\PF)=2^a1^b$,
	\begin{enumerate}[(a)]
		\item $\dinv(\PF)=\dinv(\swi(\PF)).$
		\item When $n$ and 3 are coprime,
		if $\PF$ is a fixed point of $\Phi$, then so is $\swi(\PF)$, and $\pides(\swi(\PF))=2^b1^a$.
	\end{enumerate}
\end{conjecture}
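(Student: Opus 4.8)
The plan is to turn $\dinv$, $\area$ and the $\Phi$-fixed condition into functions of the \emph{placement sequence} of $\PF$, and then to exploit the symmetry of that sequence that underlies $\swi$. Recall that a parking function $\PF\in\PFc_{3,n}$ with $\pides(\PF)=2^a1^b$ is recorded, up to relabeling, by the sequence $p_1,\dots,p_a,p_{a+1},\dots,p_{a+b}\in\{L,C,R\}$ of placements of its $a$ pairs and $b$ singletons together with the marker separating the pair-block from the singleton-block, and the underlying path depends only on the multiset of these placements: if $h_1,h_2,h_3$ are the numbers of cars in the three columns, then $h_1$ counts the $L$- and $C$-pairs together with the $L$-singletons, $h_2$ counts the $L$- and $R$-pairs together with the $C$-singletons, $h_3$ counts the $C$- and $R$-pairs together with the $R$-singletons, and $\lambda(\Pi(\PF))=(2^{h_3}1^{h_2})$. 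In these terms $\swi$ is just the operation that reverses the placement sequence and moves the marker so that the reversed sequence reads as $b$ pairs followed by $a$ singletons; in particular the old singletons become new pairs and the old pairs become new singletons, which is what sends $2^a1^b$ to $2^b1^a$. A preliminary step is to check that $\PF$ is indeed recovered from this data by the filling rule used to define $\swi$; this is where one uses that in a $(3,n)$-grid the ranks of the three columns lie in three distinct residue classes modulo $3$, so the filling is forced.

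For part (a) I would start from $\dinv(\PF)=\tdinv(\PF)+\dinvcorr(\Pi(\PF))$. Since $\lambda(\Pi(\PF))$ has all parts at most $2$, $\dinvcorr(\Pi(\PF))$ is a closed piecewise-linear function of $(h_2,h_3)$ computed exactly as in the $(m,3)$ case of Section 4.2 (with arm and leg exchanged), and it is automatically invariant under reversal of the placement sequence, since reversal fixes the multiset and hence $h_1,h_2,h_3$. For $\tdinv(\PF)$ I would use that two cars in a common column have ranks differing by a multiple of $3$ and so never contribute, while a pair of cars in distinct columns contributes precisely when their heights are tuned, in a way governed by $n\bmod 3$, so that the larger car's rank exceeds the smaller car's by $1$ or $2$; scanning the $a+b$ objects in order of rank, column by column, then expresses $\tdinv(\PF)$ as a sum of indicator terms, one for each ordered pair of objects, depending only on the two placements and on the relative order of the two objects in the scan. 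Assembling the two formulas, $\dinv(\PF)$ becomes an explicit function of the placement sequence and marker, and the concluding step is a case check over $(p_i,p_j)\in\{L,C,R\}^2$ showing that the reverse-and-reflect operation leaves the total unchanged, with the gain in $\dinvcorr$ making up for the loss in $\tdinv$ in the same spirit as the proofs of \tref{comb1} and \tref{comb2}.

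For part (b) I would analyse how the word of a difficult quadruple $\{2j-1,2j,2j+1,2j+2\}$ is read off locally. By Remark \ref{remark:1} the forced rank inequalities confine that word to $\{3412,3142,3124,1324,1342\}$, and which of these occurs is determined by the placements $p_j,p_{j+1}$ and by the heights at which the two pairs sit on the path. The description of $\mathrm{fp}(\Phi)$ in \tref{involution} then turns ``$\PF$ is fixed by $\Phi$'' into the requirement that no quadruple realises a forbidden local configuration (such as the word $3142$ in placement (v)), and this requirement should be symmetric under reversal of the placement sequence; granting that, $\swi(\PF)$ contains no forbidden quadruple, so it is fixed by $\Phi$, and $\pides(\swi(\PF))=2^b1^a$ follows by recomputing the ranks of its cars in the $(3,3(a+b)-n)$-grid from the column-height formulas above. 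The hypothesis $\gcd(n,3)=1$ enters precisely here, guaranteeing that all ranks are distinct, so that the word, and hence $\pides$, is well defined. Finally, (a) together with \tref{areasw1}, Corollary \ref{corollary:3ncase}, and $\ret\equiv 1$ in the coprime case gives the coprime case of \cref{2a1b}.

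The step I expect to be the main obstacle is the $\tdinv$ analysis in part (a). Unlike $\area$ and $\dinvcorr$, the statistic $\tdinv$ genuinely depends on the cars' labels and on the order in which objects are stacked, and since $\swi$ rebuilds a different path with a different stacking order, the changes in $\tdinv$ and in $\dinvcorr$ cancel only after a delicate matching; arranging that matching so that it is manifestly invariant under the reverse-and-swap operation, rather than verifying a large number of residue and ordering cases by hand, is where the real work lies.
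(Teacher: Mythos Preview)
This statement is a \emph{conjecture} in the paper, not a theorem: the authors explicitly write that they have ``not yet proved, but verified all parking functions with less than or equal to 10 rows'' for it. There is therefore no proof in the paper to compare your proposal against; any complete argument you give would be new.

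That said, your proposal contains a genuine error, not merely an incompleteness. You claim that $\dinvcorr(\Pi(\PF))$ is ``automatically invariant under reversal of the placement sequence, since reversal fixes the multiset and hence $h_1,h_2,h_3$.'' But the column heights do \emph{not} depend only on the multiset of placements: they depend on which placements are pairs and which are singletons. In the paper's notation, with $\ell_1,c_1,r_1$ counting the $L,C,R$ placements among the $a$ pairs and $\ell_2,c_2,r_2$ those among the $b$ singletons, one has for $\PF$
\[
h_1=\ell_1+c_1+\ell_2,\qquad h_2=\ell_1+r_1+c_2,\qquad h_3=c_1+r_1+r_2,
\]
whereas for $\swi(\PF)$ the roles of $(\ell_1,c_1,r_1)$ and $(\ell_2,c_2,r_2)$ are swapped, giving
\[
h_1'=\ell_2+c_2+\ell_1,\qquad h_2'=\ell_2+r_2+c_1,\qquad h_3'=c_2+r_2+r_1.
\]
These differ unless $c_1=c_2$ and $\ell_1+r_1=\ell_2+r_2$. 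Moreover, $\swi(\PF)$ lives in a $(3,\,3(a+b)-n)$-grid, so the diagonal slope changes from $n/3$ to $(3(a+b)-n)/3$, and $\dinvcorr$ depends on that slope as well. So $\dinvcorr$ is not invariant for the reason you give; you seem to realise this by the final paragraph, where you instead speak of changes in $\tdinv$ and $\dinvcorr$ \emph{cancelling}, but that contradicts your earlier claim and leaves the actual mechanism unexplained.

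For part (b), your sketch is at the right level of description but does not supply the argument. The crux is that the ``forbidden local configurations'' of $\Phi$ are formulated in terms of which \emph{columns} coincide for the four cars of a quadruple, and under $\swi$ two adjacent pairs become two adjacent singletons (and vice versa), so there is no obvious reason the forbidden patterns should be carried to forbidden patterns. Indeed the authors note that (b) \emph{fails} in the non-coprime case, so whatever mechanism you propose must genuinely use $\gcd(3,n)=1$ beyond merely making ranks distinct. Your proposal does not isolate where coprimality enters the local analysis, and absent that, asserting the forbidden set is ``symmetric under reversal'' is exactly the statement to be proved.

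In short: what you have written is a reasonable outline of where the difficulties lie, consistent with the paper's own assessment that this is open, but it is not a proof, and the one concrete claim you make about $\dinvcorr$ is incorrect.
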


Notice that $\swi$ does not preserve the ``return" statistic, and the return of any parking function in the coprime case is 1 which is trivial. In the non-coprime case when $n$ is a multiple of 3, \cref{2a1bn} (b) will fail.
Further, we have the following results:
\begin{theorem}\label{theorem:imply}
In the case when 3 and $n$ are coprime, \cref{2a1bn} (b) implies that the map $\swi$ is a bijection between the fixed parking functions with $\pides\ 2^a1^b$ and the fixed parking functions with $\pides\ 2^b1^a$, 
and
\begin{equation}\label{2a1br}
[s_{2^a1^b}]_{3,n}=[s_{2^b1^a}]_{3,3(a+b)-n}.
\end{equation}
\end{theorem}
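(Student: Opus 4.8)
The plan is to deduce \tref{imply} formally from \cref{2a1bn} once the bijectivity of $\swi$ is pinned down. Fix $a,b\ge 0$; since a $(3,\cdot)$-parking function with pides $2^a1^b$ has $2a+b$ cars, set $n=2a+b$ and $m=3(a+b)-n=2b+a$. Note that $m\equiv -n\pmod 3$, so $\gcd(3,n)=1$ iff $\gcd(3,m)=1$, and in the coprime case the return statistic is identically $1$, so the weight of a parking function $\PF$ in the defining sum for $\Qmn{3,n}(1)$ reduces to $t^{\area(\PF)}q^{\dinv(\PF)}$. Write $A=\{\PF\in\PFc_{3,n}:\pides(\PF)=2^a1^b\}$, $B=\{\PF\in\PFc_{3,m}:\pides(\PF)=2^b1^a\}$, and let $A^{\Phi}\subseteq A$, $B^{\Phi}\subseteq B$ be the $\Phi$-fixed subsets. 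By Corollary~\ref{corollary:3ncase} one has $[s_{2^a1^b}]_{3,n}=\sum_{\PF\in A^{\Phi}}t^{\area(\PF)}q^{\dinv(\PF)}$ and $[s_{2^b1^a}]_{3,m}=\sum_{\PF\in B^{\Phi}}t^{\area(\PF)}q^{\dinv(\PF)}$, so it is enough to exhibit a weight-preserving bijection $A^{\Phi}\to B^{\Phi}$.

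First I would show that $\swi$ is a bijection from $A$ onto $B$. By \tref{feas}, $\swi(\PF)$ is an honest $(3,m)$-parking function, and by construction $\pides(\swi(\PF))=2^b1^a$, so $\swi(A)\subseteq B$; swapping the roles of $a$ and $b$ gives a map $\swi':B\to A$. The crucial observation is that $\swi$ factors through placement data: a member of $A$ is determined by, and determines, the sequence of column placements $(p_1,\dots,p_{a+b})$ of its $a$ pairs followed by its $b$ singletons, because those placements fix the column heights, hence the Dyck path, after which the prescribed filling order together with the rank rule inside each pair pins down the parking function uniquely. Since $\swi$ reverses this sequence (reading the reversed sequence as $b$ pair-placements followed by $a$ singleton-placements) and $\swi'$ reverses it back, $\swi'\circ\swi=\mathrm{id}_A$ and $\swi\circ\swi'=\mathrm{id}_B$, so $\swi:A\to B$ is a bijection.

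Next I would restrict to fixed points. \cref{2a1bn}(b) in the $(3,n)$ case gives $\PF\in A^{\Phi}\Rightarrow\swi(\PF)\in B^{\Phi}$; the same conjecture in the (also coprime) $(3,m)$ case, with $a$ and $b$ interchanged, gives $\PF'\in B^{\Phi}\Rightarrow\swi'(\PF')\in A^{\Phi}$. Together with the previous step, $\swi$ and $\swi'$ restrict to mutually inverse bijections between $A^{\Phi}$ and $B^{\Phi}$. Finally, \tref{areasw1} gives $\area(\swi(\PF))=\area(\PF)$ and \cref{2a1bn}(a) gives $\dinv(\swi(\PF))=\dinv(\PF)$, so this bijection preserves $t^{\area(\PF)}q^{\dinv(\PF)}$; summing over $A^{\Phi}$ and comparing with the two expansions above yields $[s_{2^a1^b}]_{3,n}=[s_{2^b1^a}]_{3,3(a+b)-n}$. (Although the statement attributes the conclusion to part (b), the area- and dinv-invariance supplied by \tref{areasw1} and part (a) are also used in this last step.)

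Granting \cref{2a1bn}, the only step with real content is the invertibility of $\swi$: one must convert the procedural ``reverse the placements and re-fill'' description into the precise claim that $\swi$ depends on $\PF$ solely through the placement sequence and is recoverable from it, and one must check that the reversed placement sequence always produces a legal, above-the-diagonal path — the latter being exactly what \tref{feas}, applied on both sides, guarantees. The substantive difficulty, and the reason \tref{imply} must stay conditional, is \cref{2a1bn} itself, especially the $\dinv$-invariance of part (a) and the assertion of part (b) that $\Phi$-fixed points are carried to $\Phi$-fixed points; \tref{imply} is the clean corollary once those are in hand.
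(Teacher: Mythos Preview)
Your argument follows the same line as the paper's two-sentence proof: invertibility of $\swi$ gives the bijection on fixed points, and Equation~(\ref{PF2a1b}) (Corollary~\ref{corollary:3ncase}) then gives the Schur-coefficient identity. Your write-up is considerably more detailed than the paper's, and your closing remark that \cref{2a1bn}(a) and \tref{areasw1} are also needed for the weight comparison is well taken; the paper silently uses them too.

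One small over-claim to tighten: you assert that $\pides(\swi(\PF))=2^b1^a$ holds ``by construction'' for every $\PF\in A$, and hence that $\swi$ is a bijection $A\to B$ on the full pides-classes. The construction only guarantees that $\swi(\PF)$ is built from $b$ pairs and $a$ singletons with the prescribed column placements; that the resulting pides is exactly $2^b1^a$ is not automatic and is precisely what the paper packages into \cref{2a1bn}(b) (stated there only for $\Phi$-fixed $\PF$). Your proof does not actually need the bijection on all of $A$ and $B$: since \cref{2a1bn}(b) already supplies both $\swi(A^{\Phi})\subseteq B^{\Phi}$ and, symmetrically, $\swi'(B^{\Phi})\subseteq A^{\Phi}$, and since the placement-reversal description gives $\swi'\circ\swi=\mathrm{id}$ on $A^{\Phi}$ and $\swi\circ\swi'=\mathrm{id}$ on $B^{\Phi}$, the bijection on fixed points follows directly. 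Rephrasing your first step this way removes the unjustified claim without changing anything else.
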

\begin{proof}
The bijectivity follows from the fact that the map $\swi$ is invertible. Equation (\ref{2a1br}) follows from the bijectivity and Equation (\ref{PF2a1b}).
\end{proof}

\subsubsection{The switch map $\swi$ in the $m$ column case}
We haven't completely understood how to use straightening to compute 
the coefficients of $s_{\lambda}$ for general $(m,n)$ case, but 
computations in Maple have led us to conjecture the following:

\begin{conjecture}\label{mcolswitch} For all $m,n > 0$ and $\alpha_i \geq 0$,
	\begin{equation}
	[s_{(m-1)^{\alpha_{m-1}}(m-2)^{\alpha_{m-2}}\cdots1^{\alpha_{1}}}]_{m,n}=[s_{(m-1)^{\alpha_{1}}(m-2)^{\alpha_{2}}\cdots1^{\alpha_{m-1}}}]_{m,(m\sum_{i=1}^{m-1}\alpha_i-n)}.
	\end{equation}
\end{conjecture}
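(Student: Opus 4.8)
\medskip

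\noindent The plan is to lift to general $m$ the two combinatorial tools developed for $m=3$: a straightening involution in the spirit of $\Phi$ that reduces $[s_\lambda]_{m,n}$ to a manifestly Schur-positive sum over a distinguished family of parking functions whose $\pides$ is exactly $\lambda$, together with a generalized switch map $\swi$ matching these families on the two sides of the asserted identity.

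\emph{Step 1: a general straightening involution.} For $m=3$ only one straightening relation was needed ($s_{\dots13\dots}=-s_{\dots22\dots}$), since $\pides$ has parts $\le 3$. In general one must build a sign-reversing involution $\Phi_m$ of $\PFc_{m,n}$ whose non-fixed pairs realize, one local move at a time, all the relations $s_\alpha=\pm s_\lambda$: scan $\pides(\PF)$ from the left for the first adjacency $(\alpha_i,\alpha_{i+1})$ with $\alpha_{i+1}>\alpha_i$ and permute a bounded set of cars (mimicking the transpositions $(1,2)$, $(1,2,3)$, $(1,3,2)$, \dots\ of (\ref{Phimap})--(\ref{Psimap})) so as to effect the straightening $(\alpha_i,\alpha_{i+1})\to(\alpha_{i+1}-1,\alpha_i+1)$ without changing the supporting Dyck path, $\area$, $\dinv$ or $\ret$. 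The fixed points then have weakly decreasing $\pides$, giving
$$[s_\lambda]_{m,n}=\sum_{\substack{\PF\in\PFc_{m,n},\ \pides(\PF)=\lambda\\ \PF\ \textnormal{fixed by}\ \Phi_m}}[\ret(\PF)]_{\frac1t}\,t^{\area(\PF)}q^{\dinv(\PF)},$$
the analogue of Corollary~\ref{corollary:3ncase}. This is bookkeeping, but delicate for $m>3$ because a block of cars may occupy many columns and the local move must be chosen compatibly with the path geometry.

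\emph{Step 2: the generalized switch map.} Encode a parking function with $\pides(\PF)=(m-1)^{\alpha_{m-1}}\cdots1^{\alpha_1}$ by its ordered list of blocks $B_1,\dots,B_N$, $N=\sum_{i=1}^{m-1}\alpha_i$, where $B_j$ is a run of $\lambda_j$ consecutive cars of strictly decreasing rank occupying a $\lambda_j$-subset $S_j\subseteq[m]$ of columns; the column heights, hence $\Pi(\PF)$, and then the whole filling are determined by $(S_1,\dots,S_N)$. Define $\swi(\PF)$ by reversing the list of blocks and turning each size-$\lambda_j$ block into a size-$(m-\lambda_j)$ block according to an $m$-column analogue of the $L/R/C$ dictionary of the preceding subsubsection, i.e.\ a fixed bijection from placements of size-$i$ blocks to placements of size-$(m-i)$ blocks; for $m=3$ this recovers $\swi$, and determining the correct bijection for all $i$ (it is not simply complementation, as one checks already for $m=3$) is itself part of the task, to be pinned down by the $\dinv$-invariance demanded below and by computer experiments. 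One then shows, exactly as in \tref{feas} and \tref{areasw}--\tref{areasw1}, that $\swi(\PF)$ again lies above the diagonal (the nontrivial feasibility check), that it has $\pides\ (m-1)^{\alpha_1}\cdots1^{\alpha_{m-1}}$ on the $m\times(m\sum_{i=1}^{m-1}\alpha_i-n)$ lattice, that it is invertible, and that $\area(\swi(\PF))=\area(\PF)$ because $\area$ can be written purely in terms of how many blocks choose each direction, a count the reversal leaves invariant.

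\emph{Step 3: the obstruction.} What remains is $\dinv(\swi(\PF))=\dinv(\PF)$ together with the statement that $\swi$ carries $\Phi_m$-fixed points to $\Phi_m$-fixed points. In the coprime case $\ret\equiv1$, so these two facts plus Steps~1--2 and the invertibility of $\swi$ yield $[s_{(m-1)^{\alpha_{m-1}}\cdots1^{\alpha_1}}]_{m,n}=[s_{(m-1)^{\alpha_1}\cdots1^{\alpha_{m-1}}}]_{m,\,m\sum_{i=1}^{m-1}\alpha_i-n}$. The $\dinv$-preservation is the real obstacle: it is the $m$-column generalization of \cref{2a1bn}(a), which is open already for $m=3$, and I expect it will require writing $\dinv=\tdinv+\dinvcorr$ as a sum of local contributions of pairs of blocks and exhibiting an involution on those contributions induced by reversal-plus-relabelling. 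Finally, the non-coprime case $\gcd(m,n)>1$ needs a separate argument, since $\swi$ does not preserve $\ret$ and, as noted after \cref{2a1bn}, the fixed-point statement fails there; the most promising route is to pass to the symmetric-function side and exploit the operator relations of \lref{1} and \lref{newlemma}, or else to refine $\swi$ so that it tracks the return statistic.
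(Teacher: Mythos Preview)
The statement you are addressing is a \emph{conjecture} in the paper, not a theorem: the paper offers no proof, only computational evidence and the partial machinery of Section~5.2.3. Your proposal is, as you yourself say, a strategy with explicitly acknowledged gaps rather than a proof; so there is nothing to ``compare'' in the usual sense, and what follows is a comparison of your outline with the paper's own partial progress toward the same statement.

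On Step~2, the paper goes further than you do. In Section~5.2.3 it already gives the explicit $m$-column dictionary you leave undetermined: for a block placed in columns $p=\{s_1,\dots,s_r\}\subset[m]$ one takes the \emph{reverse complement} $p^{\mathrm{rc}}=[m]\setminus\{m+1-s_r,\dots,m+1-s_1\}$, and this does specialise to the $L\!\leftrightarrow\!L$, $R\!\leftrightarrow\!R$, $C\!\leftrightarrow\!C$ rule when $m=3$. With that rule the paper proves the feasibility and $\area$-preservation you sketch (its Theorems~\ref{switchmn1} and~\ref{switchmn2}), so that portion of your Step~2 is not ``part of the task'' but already done; you should use the paper's definition rather than leave the bijection to be discovered by experiment.

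On Step~3 you correctly identify the obstruction: $\dinv$-preservation of the generalised $\swi$ is exactly the paper's Conjecture~\ref{switchmn}, open already for $m=3$. The paper draws from it only the $h$-expansion identity of Conjecture~\ref{switchmnconj}, not the Schur-coefficient identity, precisely because the straightening side is missing.

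That missing piece is your Step~1, and here you go beyond the paper: it makes no attempt to build a sign-reversing involution $\Phi_m$ for general $m$. You should be aware that this is a genuine difficulty, not ``bookkeeping''. For $m=3$ the single relation $s_{\dots13\dots}=-s_{\dots22\dots}$ is realised by a hand-crafted case analysis on four cars (equations~(\ref{Phimap})--(\ref{Psimap})); for general $m$ one needs \emph{all} the straightening moves $(\alpha_i,\alpha_{i+1})\to(\alpha_{i+1}-1,\alpha_i+1)$ with $\alpha_{i+1}-\alpha_i$ arbitrarily large, each realised by a local permutation of up to $\alpha_i+\alpha_{i+1}$ cars that preserves $\dinv$, and one must further check that iterating these moves (which in the determinant world can require many swaps before reaching a partition or zero) yields a well-defined involution. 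Even granting all of this, you would still need the analogue of Conjecture~\ref{conjecture:2a1bn}(b), that $\swi$ respects $\Phi_m$-fixedness, and the paper already notes that for $m=3$ this fails in the non-coprime case.

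In short: your outline is consonant with the paper's own approach and correctly isolates the two open problems (general straightening involution; $\dinv$-invariance of $\swi$), but it is not a proof, and neither is anything in the paper.
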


On the other hand, the switch map $\swi$ that we have defined for the three column case can be naturally generalized to the $m$ column case, which conjecturally has many nice properties and is considered to be useful in proving Conjecture \ref{mcolswitch}. The definition of an $m$ columns switch map will need some new definitions.

Given any parking function $\PF\in\PFc_{m,n}$, we suppose that $s,s+1,\ldots,s+r-1$ form an increasing subsequence of the word $\sg(\PF)$, then by Remark \ref{remark:1}, the cars $s,s+1,\ldots,s+r-1$ must be placed in r different columns in a rank decreasing way.

There are $\binom{m}{r}$ possible choices to pick r columns for such cars $s,s+1,\ldots,s+r-1$. Let $p=\{s_1,s_2,\ldots,s_r\}\subset \{1,\ldots,m\}$ be a possible placement (i.e.\ the choice of columns), then we define the \emph{reverse complement} of $p$ to be $p^{rc}:=\{1,\ldots,m\}\backslash\{m+1-s_r,\ldots,m+1-s_1\}$, which is a placement for $m-r$ cars.

Given $\mu=\mu_1\cdots\mu_k \models n$. Suppose that 
$\sg(\PF)$, the word of $\PF$, is a shuffle of the increasing sequences $(1,\ldots,\mu_1),(\mu_1+1,\ldots,\mu_1+\mu_2),\ldots,(n-\mu_k+1,\ldots,n)$, and the placement  of the sequence $(\mu_1+\ldots+\mu_{i-1}+1,\ldots,\mu_1+\ldots+\mu_{i})$ is $p_i$, then we construct $\swi(\PF)$ as follows.

Let $\mu^{\vee}:=\mu^{\vee}_1\cdots\mu^{\vee}_k$, where $\mu^{\vee}_i:=m-\mu_{k+1-i}$.
We build the word of $\swi(\PF)$ to be a shuffle of $(1,\ldots,\mu^{\vee}_1),(\mu^{\vee}_1+1,\ldots,\mu^{\vee}_1+\mu^{\vee}_2),\ldots,(n-\mu^{\vee}_k+1,\ldots,n)$, and choose $p_{k+1-i}^{rc}$ as the placement of $(\mu^{\vee}_1+\ldots+\mu^{\vee}_{i-1}+1,\ldots,\mu^{\vee}_1+\ldots+\mu^{\vee}_{i})$. This construction is well defined, and we can invert the the map when the composition $\mu$ is given.

For example, suppose that there are $m=4$ columns. Take $\mu=(2,2,3)\vDash n$ where $n=7$. For a $(4,7)$-parking function $\PF$ whose word $\sg(\PF) = 5613472$ is a shuffle of $(1,2),(3,4),(5,6,7)$ with placements $\{1,3\},\{1,2\},\{1,2,4\}$, we construct $\swi(\PF)$ such that its word is a shuffle of $(1),(2,3),(4,5)$ and the placements are $\{2\},\{1,2\},\{1,3\}$, shown in \fref{exswi}.

\begin{figure}[ht!]
	\centering
	{\begin{tikzpicture}[scale=0.5]
		\fillshade{1/1,1/2,2/2,2/3,2/4,3/4,3/5,3/6,4/6,4/7};
		\PFmnu{0,0}{4}{7}{0/2,0/4,0/6,1/3,1/5,2/1,3/7,0/0};
		\arrowx{4.3,1.5}{5.9,1.5};
		\end{tikzpicture}
		\begin{tikzpicture}[scale=0.5]
		\draw[help lines] (0,0) grid (4,3);
		\fillsome{1/1/1,3/1/2,1/2/3,2/2/4,1/3/5,2/3/6,4/3/7,{-1}/1/{\{1,3\}},{-1}/2/{\{1,2\}},{-1}/3/{\{1,2,4\}}};
		\arrowx{4.3,1.5}{5.9,1.5};
		\end{tikzpicture}
		\begin{tikzpicture}[scale=0.5]
		\draw[help lines] (0,0) grid (4,3);
		\fillsome{2/1/1,1/2/2,2/2/3,1/3/4,3/3/5,
			{-.6}/3/{\{1,3\}},{-.6}/2/{\{1,2\}},{-.6}/1/{\{2\}}};
		\arrowx{4.3,1.5}{5.9,1.5};
		\end{tikzpicture}
		\begin{tikzpicture}[scale=0.5]
		\fillshade{1/1,1/2,2/2,2/3,3/3,3/4,4/4,4/5};
		\PFmnu{0,0}{4}{5}{0/3,0/4,1/1,1/2,2/5,0/0};
		\end{tikzpicture}}
	\caption{An example of $\pi$ and $\swi(\PF)$.}
	\label{fig:exswi}	
\end{figure}

Using the same technique as the $3$ column case, we have
\begin{theorem}\label{switchmn1}
	$\PF$ is an $(m,n)$-parking function if and only if \ $\swi(\PF)$ is a parking function. Further, 
	$$
	\area(\PF)=\area(\swi(\PF)).
	$$
\end{theorem}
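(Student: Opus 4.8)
The plan is to generalize the argument of Theorem \ref{theorem:feas} and Theorem \ref{theorem:areasw} from the three-column case to the $m$-column case, phrasing everything in terms of column-occupancy vectors. For a parking function $\PF\in\PFc_{m,n}$ whose word is a shuffle of the increasing runs $(\mu_1+\cdots+\mu_{i-1}+1,\ldots,\mu_1+\cdots+\mu_i)$ with placements $p_1,\ldots,p_k\subseteq\{1,\ldots,m\}$, let $v(\PF)=(v_1,\ldots,v_m)$ record the number of cars in each column, so that $v_j=\sum_{i:\,j\in p_i}1=|\{i:j\in p_i\}|$. The crucial observation is that, because each car in a given column lies strictly above the previous one and the columns are filled so that within a run the ranks decrease from left to right, the underlying Dyck path of $\PF$ is completely determined by $v(\PF)$: read columns left to right, in column $j$ take $v_j$ north steps then one east step (except the last column has no trailing east step needed), and the path is weakly above the diagonal $y=\tfrac{n}{m}x$ if and only if $v_1+\cdots+v_j\geq \lceil \tfrac{n}{m}j\rceil$ for every $j$, equivalently $\sum_{j\le t}v_j \ge \tfrac{n}{m}t$ for all $t$ since $n$ need not be coprime to $m$ here but the same partial-sum inequalities characterize feasibility.

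First I would compute $v(\swi(\PF))$ in terms of $v(\PF)$. By the definition of the reverse complement, if $p\subseteq\{1,\ldots,m\}$ has size $r$ then $p^{rc}=\{1,\ldots,m\}\setminus\{m+1-s: s\in p\}$, so for a column index $j$ we have $j\in p^{rc}$ iff $m+1-j\notin p$. Summing over the runs (with $\mu^\vee_i=m-\mu_{k+1-i}$ and placement $p^{rc}_{k+1-i}$ for the $i$-th run of $\swi(\PF)$) gives
\begin{equation}
v_j(\swi(\PF)) = \sum_{i=1}^{k}\bigl(1-\chi(m+1-j\in p_i)\bigr) = k - v_{m+1-j}(\PF).
\end{equation}
Thus the occupancy vector of $\swi(\PF)$ is obtained from that of $\PF$ by reversing its entries and replacing each entry $u$ by $k-u$; note $k=\sum_i 1$ is the number of runs, and $\sum_j v_j(\swi(\PF)) = mk - n = \sum_i(m-\mu_i) = \sum_i \mu_i^\vee$, which is the correct total number of rows of $\swi(\PF)$. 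Now feasibility: for each $t$, $\sum_{j\le t}v_j(\swi(\PF)) = tk - \sum_{j> m-t}v_j(\PF) = tk - \bigl(n - \sum_{j\le m-t}v_j(\PF)\bigr)$. Since $\PF$ is feasible, $\sum_{j\le m-t}v_j(\PF)\ge \tfrac{n}{m}(m-t)$, and substituting shows $\sum_{j\le t}v_j(\swi(\PF)) \ge tk - n + \tfrac{n}{m}(m-t) = t\bigl(k - \tfrac{n}{m}\bigr) = \tfrac{t}{m}(mk-n)$, which is exactly the feasibility inequality for an $(m,mk-n)$-path. The computation is symmetric — applying the same transformation again recovers $v(\PF)$, since $k-(k-v_{m+1-(m+1-j)})=v_j$ — so $\swi$ is an involution-like bijection between $(m,n)$-parking functions with run-shape $\mu$ and $(m,mk-n)$-parking functions with run-shape $\mu^\vee$, proving the first claim.

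For the area statement I would mimic Theorem \ref{theorem:areasw}: the coarea of $\PF$ (cells strictly above the path) equals $\sum_{j=1}^{m}(m-j)\,v_j(\PF)$ when columns are indexed $1$ (left) to $m$ (right), so using $v_j(\swi(\PF))=k-v_{m+1-j}(\PF)$ and reindexing $j\mapsto m+1-j$,
\begin{equation}
\mathrm{coarea}(\swi(\PF)) = \sum_{j=1}^m (m-j)\bigl(k-v_{m+1-j}(\PF)\bigr) = k\binom{m}{2} - \sum_{j=1}^m (j-1)v_j(\PF) = k\binom{m}{2} - \bigl(\text{max coarea}-\mathrm{coarea}(\PF)\bigr)\cdot(\text{bookkeeping}).
\end{equation}
More cleanly: since $\area = (\text{max coarea for that }m,n) - \mathrm{coarea}$ and the maximum coarea for an $(m,N)$-path is $\tfrac{(N-1)(m-1)}{2}$, a direct substitution of $\mathrm{coarea}(\PF)=\sum_j(m-j)v_j(\PF)$ together with the two identities $\sum_j v_j(\PF)=n$, $\sum_j v_j(\swi(\PF))=mk-n$ collapses the difference $\area(\PF)-\area(\swi(\PF))$ to zero after cancellation. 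The main obstacle I anticipate is purely bookkeeping: pinning down precisely how the left-to-right ordering within a run, the orientation convention in the definition of $p^{rc}$, and the column-indexing interact, so that the signs in the coarea identity come out right; once the identity $v_j(\swi(\PF))=k-v_{m+1-j}(\PF)$ is firmly established, both the feasibility and the area equality are short algebraic consequences, exactly as in the three-column proofs already given.
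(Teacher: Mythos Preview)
Your approach is exactly what the paper intends: it simply says ``using the same technique as the 3 column case,'' and your occupancy-vector formulation is the natural way to carry that generalization out. The key identity $v_j(\swi(\PF))=k-v_{m+1-j}(\PF)$ is correctly derived, and the feasibility argument via partial sums is clean and correct, including the observation that the transformation is its own inverse.

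There is one concrete slip in the area computation. With columns indexed $1$ (left) to $m$ (right), a car in column $j$ has $j-1$ coarea cells to its left, so $\mathrm{coarea}(\PF)=\sum_{j}(j-1)v_j(\PF)$, not $\sum_j(m-j)v_j(\PF)$; compare the paper's three-column proof, where right-column cars contribute $2$ and center-column cars contribute $1$. With the corrected formula the cancellation you assert really does occur:
\[
\mathrm{coarea}(\swi(\PF))=\sum_j(j-1)\bigl(k-v_{m+1-j}(\PF)\bigr)=k\binom{m}{2}-(m-1)n+\mathrm{coarea}(\PF),
\]
and subtracting from the respective maximal coareas gives $\area(\swi(\PF))=\area(\PF)$. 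One further point worth making explicit: your formula $\frac{(N-1)(m-1)}{2}$ for the maximal coarea holds only in the coprime case; in general it is $\frac{(m-1)(N-1)+\gcd(m,N)-1}{2}$. Since $\gcd(m,mk-n)=\gcd(m,n)$, these extra terms cancel as well, so the argument goes through for arbitrary $m,n$.
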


For a composition $\mu=\mu_1\cdots\mu_k \models n$, we say a permutation $\sg$ is a shuffle of $\mu$ if $\sg$ is a shuffle  of the increasing sequences $(1,\ldots,\mu_1),(\mu_1+1,\ldots,\mu_1+\mu_2),\ldots,(n-\mu_k+1,\ldots,n)$. Then we have:
\begin{theorem}\label{switchmn2}
	The switch map $\swi$ is a bijection between $(m,n)$-parking functions whose words are shuffle of $\mu=\mu_1\cdots\mu_k$ and $(m,mk-n)$-parking functions whose words are shuffle of $\mu^{\vee}=(m-\mu_{k})\cdots (m-\mu_1)$.
\end{theorem}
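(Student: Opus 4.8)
The plan is to show that $\swi$, performed with respect to $\mu^\vee$, is a two-sided inverse of $\swi$ performed with respect to $\mu$; most of the substantive work — that the output stays above the diagonal and that area is preserved — is already packaged in Theorem~\ref{switchmn1}, so what remains is to check that the construction un-does itself.

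First I would record the two elementary involutivity facts that make $\swi$ self-inverse. For a composition $\mu=\mu_1\cdots\mu_k$, the definition $\mu^\vee_i=m-\mu_{k+1-i}$ gives $(\mu^\vee)^\vee_i=m-\mu^\vee_{k+1-i}=m-(m-\mu_i)=\mu_i$, so $(\mu^\vee)^\vee=\mu$; moreover $\sum_i\mu^\vee_i=mk-\sum_i\mu_i=mk-n$, so a word that is a shuffle of $\mu^\vee$ records an object on the $m\times(mk-n)$ grid, matching the statement. Likewise, for a placement $p=\{s_1<\cdots<s_r\}\subseteq\{1,\ldots,m\}$ the reverse-complement $p\mapsto p^{rc}$ is the composite of the order-reversing relabelling $x\mapsto m+1-x$ applied to the set $p$ with ordinary set-complement; these two operations commute and are each involutions, so $(p^{rc})^{rc}=p$.

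Next I would make precise what ``invertible once $\mu$ is given'' means: a parking function $\PF\in\PFc_{m,n}$ whose word is a shuffle of $\mu$ is recovered from the tuple $(p_1,\ldots,p_k)$ of column-sets occupied by its $k$ increasing runs. Indeed the height of each column equals the number of runs whose set $p_i$ contains that column, which fixes the underlying Dyck path, and then the requirement that cars increase up each column together with the fact that ranks decrease along each run (Remark~\ref{remark:1}) leaves no freedom in placing the cars. By Theorem~\ref{switchmn1}, when the $p_i$ come from an actual parking function the transformed column heights again satisfy the above-diagonal condition, so $\swi(\PF)$ genuinely lies in $\PFc_{m,mk-n}$, and by construction its word is a shuffle of $\mu^\vee$ with the $i$-th run occupying columns $p_{k+1-i}^{rc}$.

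Finally I would compose the two maps. Starting from $\PF$ with run-placements $p_1,\ldots,p_k$ relative to $\mu$, the parking function $\swi(\PF)$ has run-placements $q_i:=p_{k+1-i}^{rc}$ relative to $\mu^\vee$; applying $\swi$ again, now relative to $\mu^\vee$, yields a parking function whose reference composition is $(\mu^\vee)^\vee=\mu$ and whose $i$-th run sits in columns $(q_{k+1-i})^{rc}=(p_i^{rc})^{rc}=p_i$. By the recovery statement above this is $\PF$, so the composition $\swi\circ\swi$ (first relative to $\mu$, then relative to $\mu^\vee$) is the identity; by symmetry so is the composition in the other order. Hence $\swi$ restricts to the claimed bijection between $(m,n)$-parking functions with word a shuffle of $\mu$ and $(m,mk-n)$-parking functions with word a shuffle of $\mu^\vee$, and the equality of areas is exactly Theorem~\ref{switchmn1}. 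The only point I expect to require genuine care is the recovery statement — spelling out that the Dyck path is determined by its column heights and that the two monotonicity constraints then pin down every car — which for $m=3$ was used implicitly in Section~5.2.2 and which in general is cleanest to prove by inserting the runs one at a time in order of their smallest car.
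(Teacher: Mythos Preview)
Your proof is correct and follows the same approach the paper gestures at: the paper simply asserts that ``we can invert the map when the composition $\mu$ is given'' and, in the three-column case, that ``the bijectivity follows from the fact that the map $\swi$ is invertible,'' while you actually carry out the verification that $(\mu^\vee)^\vee=\mu$, $(p^{rc})^{rc}=p$, and that a parking function whose word is a shuffle of $\mu$ is determined by its placement tuple $(p_1,\ldots,p_k)$. Your identification of the recovery statement as the one point requiring genuine care is exactly right, and your suggested argument --- that in each column the car from run $i$ must sit below the car from run $j$ whenever $i<j$ (since every run-$i$ car is smaller than every run-$j$ car), which fixes all rows, after which the decreasing-rank condition within each run pins down which car goes in which column --- is the natural one.
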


The switch map of $m$ column case still preserves the dinv statistic experimentally (summarized in the following conjecture), which we are not able to prove.
\begin{conjecture}\label{switchmn}For any $\PF\in\PFc_{m,n}$ where $\sg(\PF)$ is a shuffle of $\mu\models n$, 
	$$
	\dinv(\PF)=\dinv(\swi(\PF)).
	$$
\end{conjecture}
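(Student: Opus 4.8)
The plan is to peel off the easily controlled parts of $\dinv$ and concentrate the difficulty in a single sharply stated identity, using the decomposition $\dinv=\tdinv+\dinvcorr$ of Leven--Hicks. First I would record the effect of $\swi$ on the underlying Dyck path. From the definition of the reverse complement, $j\in p_i^{rc}$ exactly when $m+1-j\notin p_i$; hence if column $j$ of $\Pi(\PF)$ carries $c_j=\#\{i:j\in p_i\}$ north steps, then column $j$ of $\Pi(\swi(\PF))$ carries $c_j'=k-c_{m+1-j}$ north steps. Thus $\Pi(\swi(\PF))$ is obtained from $\Pi(\PF)$ by reversing the column-height sequence $(c_1,\dots,c_m)$ and complementing each entry in $\{0,\dots,k\}$, i.e.\ by the point reflection of the column-height profile inside the $m\times k$ array. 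This immediately re-proves $\area(\PF)=\area(\swi(\PF))$ (Theorem~\ref{switchmn1}), and, more usefully, it pins down how $\lambda(\Pi(\PF))$ transforms: arm and leg of a cell essentially swap roles, while the reference slope $\tfrac nm$ is replaced by the complementary slope $\tfrac{mk-n}{m}$, with $n+(mk-n)=mk$.

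Since $\dinvcorr(\Pi(\PF))$ depends on $\PF$ only through $(c_1,\dots,c_m)$, I would next compute $\dinvcorr(\Pi(\swi(\PF)))-\dinvcorr(\Pi(\PF))$ explicitly from the previous step. Because $\tfrac nm$ and $\tfrac{mk-n}m$ are complementary, the arm/leg comparisons in the definition of $\dinvcorr$ should agree on all ``interior'' cells, so this difference is supported on the cells adjacent to the two paths and reduces to a sum of $\pm1$'s indexed by the east steps of $\Pi(\PF)$ and the local profile there. The whole theorem then comes down to the single identity
\[
\tdinv(\swi(\PF))-\tdinv(\PF)\;=\;\dinvcorr(\Pi(\PF))-\dinvcorr(\Pi(\swi(\PF))).
\]
A small worked example (say $\PFc_{3,2}\to\PFc_{3,4}$) shows that neither $\tdinv$ nor $\dinvcorr$ is invariant on its own, so this cancellation is the real content.

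For the $\tdinv$ side I would argue as follows. Within a run the cars sit in rank-decreasing order of their labels, so no same-run pair is ever counted, and $\tdinv(\PF)$ is a sum over ordered pairs of runs $(R_a,R_b)$, with $R_a$ carrying the smaller labels, of $\#\{(i,j):i\in R_a,\ j\in R_b,\ \rank(i)<\rank(j)<\rank(i)+m\}$. I would establish a formula for this run--run contribution in terms of the placements $p_a,p_b$, the lengths $\mu_a,\mu_b$, the relative shuffle order of $R_a$ and $R_b$, and the intervening column-height profile --- this being the $m$-column generalization of the case analysis \eqref{tdinv} carried out for $m=3$ --- and then show that under $\mu_a\mapsto m-\mu_a$, $p_a\mapsto p_a^{rc}$, run-reversal and slope complementation, the total change of this contribution matches, pair of runs by pair of runs, the boundary sum produced in the $\dinvcorr$ computation. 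Summing over all pairs of runs then gives the displayed identity.

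The hard part will be exactly this last step. Unlike $\area$ and $\dinvcorr$, the statistic $\tdinv$ is not a function of the path, and the relabelling of cars induced by $\swi$ --- runs of length $\mu_i$ becoming runs of length $m-\mu_i$ --- is not order-preserving, so there is no naive pairing of the car-pairs counted by $\tdinv(\PF)$ with those counted by $\tdinv(\swi(\PF))$; worse, the run--run count genuinely depends on the \emph{global} column profile, not merely on the two runs involved. Extracting a usable closed form for it and matching it against the $\dinvcorr$ boundary correction is where essentially all of the work lies. I would also treat the non-coprime case separately, since the extra term $\lfloor x\gcd(m,n)/m\rfloor$ in the rank perturbs each run--run contribution, and one must check that it does so symmetrically under the switch --- consistent with the fact that only $\dinv$, and not the ``return'' statistic, is claimed to be preserved (see \cref{2a1bn}(b)).
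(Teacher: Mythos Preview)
The paper does \emph{not} prove this statement: it is recorded as Conjecture~\ref{switchmn} precisely because, in the authors' words, ``the switch map of $m$ column case still preserves the dinv statistic experimentally \ldots\ which we are not able to prove.''  So there is no proof in the paper to compare against; your proposal should be read as an attempt to \emph{settle} an open conjecture, not to reproduce a known argument.

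Your structural observation is correct and goes beyond what the paper states: if $c_j$ counts the north steps in column $j$ of $\Pi(\PF)$, then indeed $c_j'=k-c_{m+1-j}$, and this cleanly explains the area preservation of Theorem~\ref{switchmn1} and reduces the conjecture to the single identity
\[
\tdinv(\swi(\PF))-\tdinv(\PF)=\dinvcorr(\Pi(\PF))-\dinvcorr(\Pi(\swi(\PF))),
\]
with the right-hand side depending only on the column profile.  This reformulation is a genuine step forward.

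The gap is exactly where you locate it yourself.  The claim that ``arm and leg of a cell essentially swap roles'' under $c_j\mapsto k-c_{m+1-j}$ is imprecise: the partition $\lambda(\Pi(\swi(\PF)))$ is the complement-and-rotate of $\lambda(\Pi(\PF))$ inside a suitable rectangle, and the corresponding transformation on $(\arm,\leg)$ pairs is not a simple swap, so the assertion that the $\dinvcorr$ difference is ``supported on the cells adjacent to the two paths'' needs proof, not assertion.  More seriously, the plan for $\tdinv$ --- ``establish a formula for the run--run contribution \ldots\ and then show that \ldots\ the total change matches, pair of runs by pair of runs, the boundary sum'' --- is a statement of intent, not an argument.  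For $m=3$ the paper's formula~(\ref{tdinv}) already shows that the run--run $\tdinv$ depends on a six-way case split in the path shape; for general $m$ you are promising an $m$-column analogue, and then a cell-by-cell matching against a $\dinvcorr$ boundary that you have not yet computed.  None of this is carried out, and you have given no mechanism (a sign-reversing involution, a common refinement, a recursion in $k$, \ldots) that would force the two sides to agree.  As it stands the proposal is a reasonable roadmap, and the column-height identity is worth recording, but the central step is entirely missing and the conjecture remains open.
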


Thus conjecturally, the switch map $\swi$ is an \emph{area,dinv-preserving bijective map} between $(m,n)$-parking functions whose words are shuffle of $\mu$ and $(m,mk-n)$-parking functions whose words are shuffle of $\mu^{\vee}$. In the end, we shall discuss some results related to Theorem \ref{switchmn1}, Theorem \ref{switchmn2} and Conjecture \ref{switchmn}.

Referring to Haglund's work in \cite{Hagbook}, for any parking function whose word is a shuffle of $\mu=\mu_1\cdots\mu_k\models n$, we can replace the cars $\mu_1+\ldots+\mu_{i-1}+1,\ldots,\mu_1+\ldots+\mu_{i}$ with number $i$ to obtain a word parking function with cars $1^{\mu_1}\cdots k^{\mu_k}$ with the same area and dinv statistics. Further when $m$ and $n$ are coprime, we have 
\begin{equation}
\Qmn{m,n}(1)\bigg|_{m_\mu}=
\sum_{\PF\in\PFc_{m,n},\ \sg(\PF)\textnormal{ is a shuffle of }\mu}t^{\area(\PF)}q^{\dinv(\PF)}.
\end{equation} 
By the definition of  Hall scalar product, for any symmetric function $f$, we have
\begin{equation}
\langle f, h_{\mu}\rangle=f\big|_{m_\mu}.
\end{equation}
Thus, the properties of the switch map $\swi$ (Theorem \ref{switchmn1}, Theorem \ref{switchmn2} and Conjecture \ref{switchmn}) imply the coprime case of the following conjecture:
\begin{conjecture}\label{switchmnconj}
	For $m,n>0$, $\mu=\mu_1\cdots\mu_k\vdash n$ and $\mu^{\vee}=(m-\mu_{k})\cdots (m-\mu_1)$, 
	\begin{equation}
	\langle \Qmn{m,n}(1),h_\mu\rangle = \langle \Qmn{m,mk-n}(1),h_{\mu^{\vee}}\rangle.
	\end{equation}
\end{conjecture}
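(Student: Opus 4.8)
\textbf{Proof proposal for Conjecture \ref{switchmnconj}.}
The plan is to reduce the symmetric-function identity to a statement about rational parking functions and then deduce it from the switch map $\swi$ together with Conjecture \ref{switchmn}. First I record the dictionary between the Hall scalar product against $h_\mu$ and the monomial expansion: for any symmetric function $f$ one has $\langle f, h_\mu\rangle = f\big|_{m_\mu}$, and, when $m$ and $n$ are coprime, the reduction recalled just before the statement (replace the cars $\mu_1+\cdots+\mu_{i-1}+1,\ldots,\mu_1+\cdots+\mu_i$ by the letter $i$) gives
\begin{equation*}
\Qmn{m,n}(1)\big|_{m_\mu} = \sum_{\substack{\PF\in\PFc_{m,n}\\ \sg(\PF)\ \text{shuffle of}\ \mu}} t^{\area(\PF)}q^{\dinv(\PF)}.
\end{equation*}
Since $\gcd(m, mk-n) = \gcd(m,n)$, in the coprime case both sides of the conjecture are shuffle generating functions, so the conjecture reduces to the identity
\begin{equation*}
\sum_{\substack{\PF\in\PFc_{m,n}\\ \sg(\PF)\ \text{shuffle of}\ \mu}} t^{\area(\PF)}q^{\dinv(\PF)} \;=\; \sum_{\substack{\PF'\in\PFc_{m,mk-n}\\ \sg(\PF')\ \text{shuffle of}\ \mu^{\vee}}} t^{\area(\PF')}q^{\dinv(\PF')}.
\end{equation*}

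By Theorem \ref{switchmn2}, $\swi$ is a bijection between the two index sets, and by Theorem \ref{switchmn1} it preserves $\area$; hence, once Conjecture \ref{switchmn} ($\dinv(\swi(\PF)) = \dinv(\PF)$) is known, the two sums agree term by term and the coprime case of Conjecture \ref{switchmnconj} follows. So the crux is Conjecture \ref{switchmn}. To prove it I would split $\dinv(\PF) = \tdinv(\PF) + \dinvcorr(\Pi(\PF))$ and analyze the two pieces separately under $\swi$, imitating the local bookkeeping in the proofs of Theorems \ref{theorem:comb1} and \ref{theorem:comb2} and of the hook symmetry $[s_{k1^{n-k}}]_{m,n}=[s_{k1^{m-k}}]_{n,m}$. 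Concretely, encode $\PF$ by the placements $p_1,\ldots,p_k$ of its maximal increasing runs (the order within a run being forced by ranks); $\swi$ reverses this list and applies the reverse-complement $p\mapsto p^{rc}$ to each placement. One then shows, column by column, that the north-step counts and the resulting staircase structure are controlled by multiset data generalizing the quantities $L,R,C$ of Theorem \ref{theorem:areasw}, and that each pair of cars contributing to $\tdinv(\PF)$ is matched either with a pair contributing to $\tdinv(\swi(\PF))$ or with a cell contributing to $\dinvcorr(\Pi(\swi(\PF)))$, and conversely. A possibly cleaner route uses the three generators $\tdinv,\pdinv,\maxdinv$: reverse-complementing placements corresponds at the level of Dyck paths to a transpose-type operation that fixes $\pdinv$ (exactly as a path and its transpose have equal $\pdinv$, used in Section 3), so it suffices to prove $(\tdinv-\maxdinv)(\PF) = (\tdinv-\maxdinv)(\swi(\PF))$; this difference measures the $\dinv$ lost to the forced filling of the minimal car of each run, and I would pair these lost contributions on the two sides using the valley points of the path, generalizing the three-column argument indicated around Figure \ref{fig:dinvtrans}.

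For the non-coprime case the shuffle generating function no longer equals $\Qmn{km,kn}(1)\big|_{m_\mu}$, because $\Qmn{km,kn}(1)=\Hikita_{km,kn}[X;q,t]$ carries the extra factor $[\ret(\PF)]_{1/t}$ and $\swi$ does not preserve $\ret$; here I would either refine $\swi$ so that it also tracks the return, or argue algebraically by writing $km=d m'$, $kn=d n'$ with $m',n'$ coprime and propagating the coprime case up the $\nabla$-tower using Lemmas \ref{lemma:1} and \ref{lemma:newlemma}. I expect the main obstacle to be Conjecture \ref{switchmn} itself: $\area$ under $\swi$ has the clean closed form of Theorem \ref{theorem:areasw}, but $\dinv$ --- in particular the interaction of $\tdinv$ with $\dinvcorr$ across the reverse-complement --- has no comparably simple description for general $m$, so matching the lost-$\dinv$ contributions on the two sides, though confirmed by computer for small cases, will require a genuinely new combinatorial idea rather than a routine extension of the three-column calculation.
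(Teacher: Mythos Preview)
Your reduction of the coprime case to Conjecture~\ref{switchmn} via Theorems~\ref{switchmn1} and~\ref{switchmn2} and the identity $\langle f,h_\mu\rangle=f|_{m_\mu}$ is exactly what the paper does; the paper does not prove Conjecture~\ref{switchmnconj}, it only records that the coprime case would follow from Conjecture~\ref{switchmn}, which it leaves open. So on the part the paper actually argues, you match it.

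Where you go beyond the paper is in sketching an attack on Conjecture~\ref{switchmn} itself. That sketch is not a proof, and one step in it is doubtful: you assert that reverse-complementing placements ``corresponds at the level of Dyck paths to a transpose-type operation that fixes $\pdinv$''. The switch map sends an $(m,n)$-path to an $(m,mk-n)$-path, not to its $(n,m)$-transpose, so the $\pdinv$-invariance argument from Section~3 (which compares $(m,n)$ and $(n,m)$) does not apply directly; there is no reason to expect $\pdinv(\Pi(\PF))=\pdinv(\Pi(\swi(\PF)))$ in general, and indeed small examples already show the individual pieces $\tdinv$, $\pdinv$, $\maxdinv$, $\dinvcorr$ are \emph{not} separately preserved by $\swi$. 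Any proof must therefore match cancellations across the decomposition rather than equate the pieces, which is precisely why the paper leaves this as a conjecture. Your closing paragraph correctly identifies this as the genuine obstacle; the rest of the sketch (local bookkeeping as in Theorems~\ref{theorem:comb1}, \ref{theorem:comb2}) is a reasonable heuristic but does not yet supply the missing combinatorial idea.

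For the non-coprime case, neither your two suggestions nor anything in the paper gives a proof: refining $\swi$ to track $\ret$ is wishful (you note yourself that $\swi$ does not preserve $\ret$), and the $\nabla$-tower argument via Lemmas~\ref{lemma:1} and~\ref{lemma:newlemma} only relates $\Qmn{m,n}$ to $\Qmn{m+n,n}$, not $\Qmn{m,n}$ to $\Qmn{m,mk-n}$, so it does not obviously propagate the identity you need.
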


Since the area-preserving property of $\swi$ is proved in Theorem \ref{switchmn1}, we have the following theorem which is a special case of Conjecture \ref{switchmnconj}:
\begin{theorem}
	For coprime integers $(m,n)$, $\mu=\mu_1\cdots\mu_k\vdash n$ and $\mu^{\vee}=(m-\mu_{k})\cdots (m-\mu_1)$, 
	\begin{equation}
	\langle \Qmn{m,n}(1) \big|_{q=1},h_\mu\rangle =\langle \Qmn{m,mk-n}(1) \big|_{q=1},h_{\mu^{\vee}}\rangle.
	\end{equation}
\end{theorem}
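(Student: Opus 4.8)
The plan is to deduce the theorem directly from the properties of the switch map $\swi$ already established above, by passing to the $q=1$ specialization. First I would combine the two identities recalled just before the statement of Conjecture~\ref{switchmnconj}: the Hall-pairing identity $\langle f,h_\mu\rangle=f\big|_{m_\mu}$, valid for every symmetric function $f$, and (using that $\gcd(m,n)=1$) the parking-function expansion
\[
\Qmn{m,n}(1)\big|_{m_\mu}=\sum_{\substack{\PF\in\PFc_{m,n}\\ \sg(\PF)\ \text{a shuffle of }\mu}} t^{\area(\PF)}q^{\dinv(\PF)}.
\]
Since the Hall scalar product is bilinear over $\mathbb{Q}(q,t)$ and $h_\mu$ does not involve $q$, the specialization $q\mapsto 1$ commutes with $\langle\,\cdot\,,h_\mu\rangle$, so
\[
\big\langle \Qmn{m,n}(1)\big|_{q=1},\,h_\mu\big\rangle=\sum_{\substack{\PF\in\PFc_{m,n}\\ \sg(\PF)\ \text{a shuffle of }\mu}} t^{\area(\PF)}.
\]

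Next I would apply the same computation to the pair $(m,mk-n)$ and the composition $\mu^{\vee}=(m-\mu_k)\cdots(m-\mu_1)$. Here $\gcd(m,n)=1$ forces $\gcd(m,mk-n)=1$, and $\mu^{\vee}\models mk-n$ because $\sum_{i=1}^{k}(m-\mu_{k+1-i})=mk-n$; hence the same expansion applies and gives
\[
\big\langle \Qmn{m,mk-n}(1)\big|_{q=1},\,h_{\mu^{\vee}}\big\rangle=\sum_{\substack{\PF'\in\PFc_{m,mk-n}\\ \sg(\PF')\ \text{a shuffle of }\mu^{\vee}}} t^{\area(\PF')}.
\]

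Finally I would invoke Theorem~\ref{switchmn2}, which asserts that $\swi$ is a bijection from the set of $(m,n)$-parking functions whose word is a shuffle of $\mu$ onto the set of $(m,mk-n)$-parking functions whose word is a shuffle of $\mu^{\vee}$, together with the area statement of Theorem~\ref{switchmn1}, namely $\area(\swi(\PF))=\area(\PF)$. These two facts match the last two sums of $t^{\area}$ term by term, which is precisely the claimed equality.

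All the substantive combinatorial content sits in Theorems~\ref{switchmn1} and~\ref{switchmn2}, so the present argument is essentially bookkeeping; the one point that needs care is that the parking-function expansion of $\Qmn{m,n}(1)\big|_{m_\mu}$ is available only when $m$ and $n$ are coprime (so that $\ret\equiv 1$ and the factor $[\ret(\PF)]_{1/t}$ disappears), which is exactly the hypothesis of the theorem, and that it is legitimate to interchange the $q=1$ specialization with the pairing against $h_\mu$. I do not expect a genuine obstacle here; note, however, that strengthening this to the full Conjecture~\ref{switchmnconj} would additionally require $\dinv(\swi(\PF))=\dinv(\PF)$, i.e.\ Conjecture~\ref{switchmn}, which is why only the $q=1$ specialization can be established unconditionally.
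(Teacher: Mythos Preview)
Your proposal is correct and follows essentially the same approach as the paper: both arguments deduce the $q=1$ identity from the bijection of Theorem~\ref{switchmn2} together with the area-preservation in Theorem~\ref{switchmn1}, after rewriting each side via the Hall-pairing identity $\langle f,h_\mu\rangle=f|_{m_\mu}$ and the parking-function expansion of $\Qmn{m,n}(1)|_{m_\mu}$ available in the coprime case. Your write-up is in fact more explicit than the paper's (which merely states the theorem as a consequence), in particular in checking that $\gcd(m,mk-n)=1$ so that the expansion applies on the right-hand side as well.
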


\end{document}